\newtheorem{theorem}{Theorem}[section]
\newtheorem{lemma}{Lemma}[section]
\providecommand{\customgenericname}{}
\newcommand{\newcustomproblem}[2]{%
	\newenvironment{#1}[1]
	{%
		\renewcommand\customgenericname{#2}%
		\renewcommand\theinnercustomgeneric{##1}%
		\innercustomgeneric
	}
	{\endinnercustomgeneric}
}
\newcommand*{\bqed}{\hfill\ensuremath{\blacksquare}}%
\def\dd{\, \mathrm{d}}
\begin{document}
	
	
	\title[Elliptic membranes in interior normal compliance contact]{On the justification of Koiter's model for elliptic membranes subjected to an interior normal compliance contact condition}
	

	\author[Paolo Piersanti]{Paolo Piersanti}
	\address{School of Science and Engineering, The Chinese University of Hong Kong (Shenzhen), 2001 Longxiang Blvd., Longgang District, Shenzhen, China}
	\email[Corresponding author]{ppiersanti@cuhk.edu.cn}
	
\today

\begin{abstract}
The purpose of this paper is twofold. First, we rigorously justify Koiter's model for linearly elastic elliptic membrane shells in the case where the shell is subject to a geometrical constraint modelled via a normal compliance contact condition defined in the interior of the shell. To achieve this, we establish a novel density result for non-empty, closed, and convex subsets of Lebesgue spaces, which are applicable to cases not covered by the ``density property'' established in [Ciarlet, Mardare \& Piersanti, \emph{Math. Mech. Solids}, 2019].

Second, we demonstrate that the solution to the two-dimensional obstacle problem for linearly elastic elliptic membrane shells, subjected to the interior normal compliance contact condition, exhibits higher regularity throughout its entire definition domain. A key feature of this result is that, while the transverse component of the solution is, in general, only of class $L^2$ and its trace is \emph{a priori} undefined, the methodology proposed here, partially based on [Ciarlet \& Sanchez-Palencia, \emph{J. Math. Pures Appl.}, 1996], enables us to rigorously establish the well-posedness of the trace for the transverse component of the solution by means of an \emph{ad hoc} formula.

\smallskip

\noindent \textbf{Keywords.} Obstacle problems $\cdot$ Variational Inequalities $\cdot$ Elasticity theory $\cdot$ Finite Difference Quotients
\end{abstract}

\maketitle

\section{Introduction}
\label{sec0}

In solid mechanics, shells are defined as three-dimensional structures characterised by a thickness that is significantly smaller than their in-plane dimensions. These structures have garnered considerable attention across Engineering, Physics, and Mathematics due to their remarkable ability to sustain external loads far exceeding what their thin geometry might suggest. This mechanical advantage arises from their intrinsic curvature and the specific boundary conditions they experience. Additionally, their efficiency in material usage, requiring minimal material for construction while offering lightweight and cost-effective solutions, makes them ubiquitous in both natural and industrial contexts. Examples in nature include eggshells, snail shells, turtle carapaces, and blood vessels, while engineered applications span ship hulls, aircraft fuselages, arched roofs, eyeglass lenses, and tyres.

In this paper, we focus on linearly elastic elliptic membrane shells with thickness $2\varepsilon > 0$. Such shells, under appropriate boundary conditions, can sustain applied body forces of order $\mathcal{O}(1)$ relative to $\varepsilon$ (i.e., \emph{independent} of the thickness parameter). This behaviour aligns with established mechanical principles, as elliptic geometries - owing to their curvature-driven stress distribution - are inherently resistant to deformation and buckling.
The first objective of this paper is the justification of Koiter's model for linearly elastic elliptic membrane shells subjected to an obstacle. All the variational problems here considered take the form of a set of variational inequalities posed over a non-empty, closed and convex subset of a suitable space. The constraint associated with the obstacle is formulated in terms of an \emph{interior normal compliance contact condition}, whose formulation depends on the nature of the problem considered.

The interior normal compliance contact condition here considered for modelling the constraint introduced by the presence of the obstacle has the advantage of being applicable to a larger class of surfaces than those for which the ``density property'' introduced by Ciarlet, Mardare \& Piersanti~\cite{CiaMarPie2018b,CiaMarPie2018} applies, and has the advantage of not suffering from the limitations deriving from appearing as a surface integral as originally considered in~\cite{Rodri2018}.

In order to establish the justification of Koiter's model for linearly elastic elliptic membrane shells subjected to the interior normal compliance contact condition, we first depart from the natural three-dimensional formulation of the obstacle problem under consideration, and we study the asymptotic behaviour of the solution as the thickness approaches zero, so as to identify a set of two-dimensional variational inequalities characterised by the arising of the mode of deformation that is dominant in the three-dimensional formulation of the obstacle problem. According to the classical theory of shells, cf., e.g., \cite{Ciarlet2000}, which mode of deformation (stretching or bending) is dominant depends on the \emph{geometry} of the middle surface of the shell, the \emph{boundary conditions} applied on the lateral face of the shell, as well as on the relation between the magnitude of the applied forces (typically, applied body loads and tractions) and the shell thickness.

Second, akin to the classical theory of shells (cf., e.g., \cite{Ciarlet2000}) and the recent papers~\cite{CiaPie2018bCR,CiaPie2018b}, we formulate Koiter's model subjected to the interior normal compliance contact condition and we show that the solution of Koiter's model converges to the solution of the \emph{same} two-dimensional model recovered upon completion of the rigorous asymptotic analysis departing for the three-dimensional formulation of the obstacle problem. By so doing, we can assert that Koiter's model and the original three-dimensional formulation of the obstacle problem asymptotically behave in the same way. The formulation of the interior normal compliance contact condition and the justification of Koiter's model subjected to one such condition constitute the first main novelty in this paper.
The remarkable properties of the asymptotic behaviour of Koiter's model are the starting point for designing convergent numerical schemes for gaining insight into the \emph{average} deformation of the original three-dimensional shell. The main advantage of this approach is that the effect of the locking phenomenon is attenuated as the thickness parameter appears, in Koiter's model, as a multiplicative parameter of the bulk energy terms instead of appearing in the geometry of the definition domain, as it happens for the three-dimensional formulation. The numerical analysis of Koiter's model subjected to the confinement condition was addressed in the case of linearly elastic elliptic membrane shells in~\cite{MeiPie2024,PPS2024-2} by means of the Finite Element Method (cf., e.g., \cite{PGCFEM}). It could be worth investigating, in future works, whether the interior normal compliance contact condition ensures faster convergence.

Paramount to establish the convergence of numerical approximations of solutions via the Finite Element Method is the higher regularity of the solutions for the models under consideration. In the last part of the paper we investigate the higher regularity of the solution of the two-dimensional limit model; this constitutes the second main novelty here proposed. The main difficulty we have to deal with for bringing this item to fruition consists in establishing a trace formula for the transverse component of the displacement, as this function is, \emph{a priori}, only of class $L^2$. We will show, in particular, that the trace formula established by Ciarlet \& Sanchez-Palencia~\cite{CiaSanPan1996} for the transverse component of the displacement of the solution of the two-dimensional limit model holds in the case where the normal compliance contact condition is taken into account~\cite{Pie-2022-interior}. In relation to this, we recall that one such conclusion could not be inferred, in general, for the confinement condition considered in~\cite{CiaPie2018b,CiaPie2018bCR,CiaMarPie2018b,CiaMarPie2018,Pie-2022-interior}, as the geometry of the constraint prevents us from applying the techniques in~\cite{CiaSanPan1996}. Apart from the recent work~\cite{Pie-2022-interior} which was developed in a vectorial framework, for the augmentation of regularity of the solutions of variational inequalities, we recall the main contribution by Frehse~\cite{Frehse1971}, where the sole scalar case was taken into account.

To summarise, the main novelties presented in this paper are the following:
\begin{itemize}
	\item[$(1)$] Formulation of the interior normal compliance contact condition and justification of Koiter's model subjected to one such condition. The main advantage of this approach is that it applies to a larger class of surfaces than those to which the ``density property'' developed in~\cite{CiaMarPie2018} applies, as well as to more general obstacles;
	\item[$(2)$] Augmentation of regularity up to the boundary for the solution of the two-dimensional limit model recovered upon completion of the asymptotic analysis contributing to the justification of Koiter's model. The main difficulty amounts to establishing the trace formula for the transverse component of the solution which is, \emph{a priori}, only of class $L^2$.
\end{itemize}

The paper is divided into seven sections, including this one. In section~\ref{sec1} we recall the background and notation. In section~\ref{sec2}, we provide a formulation of an obstacle problem for a general three-dimensional shell subjected to the interior normal compliance contact condition. In section~\ref{sec3}, we specialise the type of shell under consideration, restricting ourselves to linearly elastic elliptic membrane shells, for which the classical results used throughout the paper will be here recalled. In section~\ref{sec:AA} we conduct a rigorous asymptotic analysis as the thickness tends to zero departing from the \emph{scaled} three-dimensional model formulated in section~\ref{sec3} and subjected to the interior normal compliance contact condition. Upon completion of this section, we will have recovered the two-dimensional limit model characterised by the dominant mode of deformation of linearly elastic elliptic membrane shells, which is the stretching one. In section~\ref{secKoiter} we formulate Koiter's model subjected to the interior normal compliance contact condition, and we show that the solution of one such model converges, in a suitable sense, to the solution of the two-dimensional limit problem recovered upon completion of the asymptotic analysis carried out in section~\ref{sec:AA}. Finally, in section~\ref{sec4}, we give sufficient conditions ensuring that the solution of the two-dimensional limit problem recovered upon completion of section~\ref{sec:AA} and~\ref{secKoiter} is more regular up to the boundary and, moreover, the transverse component of the displacement satisfies the same trace formula as the one established by Ciarlet \& Sanchez-Palencia~\cite{CiaSanPan1996}.

\section{Background and notation}
\label{sec1}

For a comprehensive overview of the classical notions of differential geometry used in this paper, refer to, e.g., \cite{Ciarlet2000} or \cite{Ciarlet2005}.

Greek indices, except $\varepsilon$, take their values in the set $\{1,2\}$, while Latin indices, except when used for ordering sequences, take their values in the set $\{1,2,3\}$. Unless otherwise specified, the summation convention with respect to repeated indices is applied in conjunction with these two rules. 

As a model for the three-dimensional ``physical'' space $\mathbb{R}^3$, we adopt a \emph{real three-dimensional affine Euclidean space}, that is, a set in which a point $O \in \mathbb{R}^3$ is designated as the \emph{origin} and is associated with a \emph{real three-dimensional Euclidean space}, denoted $\mathbb{E}^3$. We equip $\mathbb{E}^3$ with an \emph{orthonormal basis} consisting of three vectors $\bm{e}^i$, with components $e^i_j = \delta^i_j$, where the notation $\delta^j_i$ represents the Kronecker symbol.

The definition of $\mathbb{R}^3$ as an affine Euclidean space implies that to any point $x \in \mathbb{R}^3$ corresponds a uniquely determined vector $\boldsymbol{Ox} \in \mathbb{E}^3$. The origin $O \in \mathbb{R}^3$ and the orthonormal vectors $\bm{e}^i \in \mathbb{E}^3$ together form a \emph{Cartesian frame} in $\mathbb{R}^3$. The three components $x_i$ of the vector $\boldsymbol{Ox}$ with respect to the basis formed by $\bm{e}^i$ are referred to as the \emph{Cartesian coordinates} of $x \in \mathbb{R}^3$, or the \emph{Cartesian components} of $\boldsymbol{Ox} \in \mathbb{E}^3$. Once a Cartesian frame is chosen, any point $x \in \mathbb{R}^3$ can be \emph{identified} with the vector $\boldsymbol{Ox} = x_i \bm{e}^i \in \mathbb{E}^3$. Consequently, a set in $\mathbb{R}^3$ can be identified with a ``physical'' body in the Euclidean space $\mathbb{E}^3$.

The Euclidean inner product and the vector product of $\bm{u}, \bm{v} \in \mathbb{E}^3$ are denoted, respectively, by $\bm{u} \cdot \bm{v}$ and $\bm{u} \wedge \bm{v}$, while the Euclidean norm of $\bm{u} \in \mathbb{E}^3$ is denoted by $\left|\bm{u}\right|$.  The abbreviations ``a.a.'' and ``a.e.'' stand for \emph{almost all} and \emph{almost everywhere}, respectively.

Given an open subset $\Omega$ of $\mathbb{R}^n$, where $n \ge 1$, we denote the usual Lebesgue and Sobolev spaces by $L^2(\Omega)$, $L^1_{\textup{loc}}(\Omega)$, $H^1(\Omega)$, $H^1_0(\Omega)$, $H^1_{\textup{loc}}(\Omega)$. The space of all functions that are infinitely differentiable on $\Omega$ and have compact support in $\Omega$ is denoted by $\mathcal{D}(\Omega)$. We use the notation $\left\| \cdot \right\|_X$ to represent the norm in a normed vector space $X$. Spaces of vector-valued functions are written in boldface.
The Euclidean norm of any point $x \in \Omega$ is denoted by $|x|$.

The boundary $\Gamma$ of an open subset $\Omega$ in $\mathbb{R}^n$ is said to be Lipschitz-continuous if the following conditions are satisfied (cf., e.g., Section~1.18 of \cite{PGCLNFAA}): Given an integer $s \ge 1$, there exist constants $\alpha_1 > 0$ and $L > 0$, and a finite number of local coordinate systems, with coordinates
$$
\bm{\phi}'_r=(\phi_1^r, \dots, \phi_{n-1}^r) \in \mathbb{R}^{n-1} \textup{ and } \phi_r=\phi_n^r, 1 \le r \le s,
$$ 
sets
$$
\tilde{\omega}_r:=\{\bm{\phi}_r \in\mathbb{R}^{n-1}; |\bm{\phi}_r|<\alpha_1\},\quad 1 \le r \le s,
$$
and corresponding functions
$$
\tilde{\theta}_r:\tilde{\omega}_r \to\mathbb{R},\quad 1 \le r \le s,
$$
such that
$$
\Gamma=\bigcup_{r=1}^s \{(\bm{\phi}'_r,\phi_r); \bm{\phi}'_r \in \tilde{\omega}_r \textup{ and }\phi_r=\tilde{\theta}_r(\bm{\phi}'_r)\},
$$
and 
$$
|\tilde{\theta}_r(\bm{\phi}'_r)-\tilde{\theta}_r(\bm{\upsilon}'_r)|\le L |\bm{\phi}'_r-\bm{\upsilon}'_r|, \quad \textup{ for all }\bm{\phi}'_r, \bm{\upsilon}'_r \in \tilde{\omega}_r, \textup{ and all }1\le r\le s.
$$

We observe that the second last formula takes into account overlapping local charts, while the last set of inequalities expresses the Lipschitz continuity of the mappings $\tilde{\theta}_r$.

An open set $\Omega$ is said to be \emph{locally on the same side of its boundary} $\Gamma$ if, in addition, there exists a constant $\alpha_2>0$ such that
\begin{align*}
	\{(\bm{\phi}'_r,\phi_r);\bm{\phi}'_r \in\tilde{\omega}_r \textup{ and }\tilde{\theta}_r(\bm{\phi}'_r) < \phi_r < \tilde{\theta}_r(\bm{\phi}'_r)+\alpha_2\}\subset \Omega&,\quad\textup{ for all } 1\le r\le s,\\
	\{(\bm{\phi}'_r,\phi_r);\bm{\phi}'_r \in\tilde{\omega}_r \textup{ and }\tilde{\theta}_r(\bm{\phi}'_r)-\alpha_2 < \phi_r < \tilde{\theta}_r(\bm{\phi}'_r)\}\subset \mathbb{R}^n\setminus\overline{\Omega}&,\quad\textup{ for all } 1\le r\le s.
\end{align*}

A \emph{domain} in $\mathbb{R}^n$ is a bounded and connected open subset $\Omega$ of $\mathbb{R}^n$, whose boundary $\partial \Omega$ is Lipschitz-continuous, the set $\Omega$ being locally on a single side of $\partial \Omega$.

Let $\omega$ be a domain in $\mathbb{R}^2$ with boundary $\gamma:=\partial\omega$,  and let $\omega_1 \subset \omega$. The special notation $\omega_1 \subset \subset \omega$ means that $\overline{\omega_1} \subset \omega$ and $\textup{dist}(\gamma,\partial\omega_1):=\min\{|x-y|;x \in \gamma \textup{ and } y \in \partial\omega_1\}>0$.
Let $y = (y_\alpha)$ denote a generic point in $\omega$, and let $\partial_\alpha := \partial / \partial y_\alpha$. A mapping $\bm{\theta} \in \mathcal{C}^1(\overline{\omega}; \mathbb{E}^3)$ is said to be an \emph{immersion} if the two vectors
$$
\bm{a}_\alpha (y) := \partial_\alpha \bm{\theta} (y)
$$
are linearly independent at each point $y \in \overline{\omega}$. Then the set $\bm{\theta} (\overline{\omega})$ is a \emph{surface} in $\mathbb{E}^3$, equipped with $y_1, y_2$ as its \emph{curvilinear coordinates}. Given any point $y\in \overline{\omega}$, the linear combinations of the vectors $\bm{a}_\alpha (y)$ span the \emph{tangent plane} to the surface $\bm{\theta} (\overline{\omega})$ at the point $\bm{\theta} (y)$, the unit vector
$$
\bm{a}_3 (y) := \frac{\bm{a}_1(y) \wedge \bm{a}_2 (y)}{|\bm{a}_1(y) \wedge \bm{a}_2 (y)|}
$$
is orthogonal to $\bm{\theta} (\overline{\omega})$ at the point $\bm{\theta} (y)$, the three vectors $\bm{a}_i(y)$ form the \emph{covariant} basis at the point $\bm{\theta} (y)$, and the three vectors $\bm{a}^j(y)$ defined by the relations
$$
\bm{a}^j(y) \cdot \bm{a}_i (y) = \delta^j_i
$$
form the \emph{contravariant} basis at $\bm{\theta} (y)$; note that the vectors $\bm{a}^\beta (y)$ also span the tangent plane to $\bm{\theta} (\overline{\omega})$ at $\bm{\theta} (y)$ and that $\bm{a}^3(y) = \bm{a}_3 (y)$.

The \emph{first fundamental form} of the surface $\bm{\theta} (\overline{\omega})$ is then defined by means of its \emph{covariant components}
$$
a_{\alpha \beta} := \bm{a}_\alpha \cdot \bm{a}_\beta = a_{\beta \alpha} \in \mathcal{C}^0 (\overline{\omega}),
$$
or by means of its \emph{contravariant components}
$$
a^{\alpha \beta}:= \bm{a}^\alpha \cdot \bm{a}^\beta = a^{\beta \alpha}\in \mathcal{C}^0(\overline{\omega}).
$$

Note that the symmetric matrix field $(a^{\alpha \beta})$ is then the inverse of the matrix field $(a_{\alpha \beta})$, that $\bm{a}^\beta = a^{\alpha \beta}\bm{a}_\alpha$ and $\bm{a}_\alpha = a_{\alpha\beta} \bm{a}^\beta$, and that the \emph{area element} along $\bm{\theta}(\overline{\omega})$ is given at each point $\bm{\theta}(y)$, with $y \in \overline{\omega}$, by $\sqrt{a(y)} \dd y$, where
$$
a := \det(a_{\alpha\beta}) \in \mathcal{C}^0(\overline{\omega}).
$$

Given an immersion $\bm{\theta} \in \mathcal{C}^2(\overline{\omega};\mathbb{E}^3)$, the \emph{second fundamental form} of the surface $\bm{\theta}(\overline{\omega})$ is defined by means of its \emph{covariant components}
$$
b_{\alpha\beta}:=\partial_\alpha\bm{a}_\beta\cdot\bm{a}_3 =-\bm{a}_\beta\cdot\partial_\alpha\bm{a}_3=b_{\beta\alpha}\in\mathcal{C}^0(\overline{\omega}),
$$
or by means of its \emph{mixed components}
$$
b^\beta_\alpha:=a^{\beta\sigma} b_{\alpha\sigma} \in \mathcal{C}^0(\overline{\omega}),
$$
and the \emph{Christoffel symbols} associated with the immersion $\bm{\theta}$ are defined by
$$
\Gamma^\sigma_{\alpha \beta}:= \partial_\alpha \bm{a}_\beta \cdot \bm{a}^\sigma = \Gamma^\sigma_{\beta \alpha} \in \mathcal{C}^0 (\overline{\omega}).
$$

The \emph{Gaussian curvature} at each point $\bm{\theta}(y)$, $y \in \overline{\omega}$, of the surface $\bm{\theta}(\overline{\omega})$ is given by:
$$
K(y):=\frac{\det(b_{\alpha\beta} (y))}{\det(a_{\alpha\beta}(y))} = \det\left(b^\beta_\alpha(y)\right).
$$

Note that the denominator in the above relation cannot vanish since $\bm{\theta}$ is assumed to be an immersion. Additionally, observe that the Gaussian curvature $K(y)$ at the point $\bm{\theta}(y)$ is also equal to the product of the two principal curvatures at that point.

Define the \emph{linearised change of metric}, or \emph{strain}, \emph{tensor} associated with the displacement field $\eta_i \bm{a}^i$ by:
$$
\gamma_{\alpha \beta}(\bm{\eta}):= \frac12 (\partial_\beta \eta_\alpha + \partial_\alpha \eta_\beta ) - \Gamma^\sigma_{\alpha \beta} \eta_\sigma - b_{\alpha \beta} \eta_3 = \gamma_{\beta\alpha} (\bm{\eta}).
$$

In this paper, we will consider a specific class of surfaces, as defined below: Let $\omega$ be a domain in $\mathbb{R}^2$. Then, a surface $\bm{\theta}(\overline{\omega})$ defined through an immersion $\bm{\theta} \in \mathcal{C}^2(\overline{\omega};\mathbb{E}^3)$ is referred to as \emph{elliptic} if its Gaussian curvature is strictly positive in $\overline{\omega}$, or equivalently, if there exists a constant $K_0$ such that:
$$
0 < K_0 \leq K(y), \textup{ for all } y \in \overline{\omega}.
$$

It turns out that, when an \emph{elliptic surface} is subjected to a displacement field $\eta_i \bm{a}^i$ whose \emph{tangential covariant components} $\eta_\alpha$ \emph{vanish on the entire boundary of the domain} $\omega$, the following inequality is satisfied. Observe that the components of the displacement fields and the linearised change of metric tensors appearing in the next theorem are no longer assumed to be continuously differentiable functions; instead, they are understood in a generalised sense, as they now belong to \emph{ad hoc} Lebesgue or Sobolev spaces.

\begin{theorem} 
	\label{korn}
	Let $\omega$ be a domain in $\mathbb{R}^2$ and let an immersion $\bm{\theta} \in \mathcal{C}^3(\overline{\omega}; \mathbb{E}^3)$ be given such that the surface $\bm{\theta}(\overline{\omega})$ is elliptic. Define the space
	$$
	\bm{V}_M (\omega) := H^1_0 (\omega) \times H^1_0 (\omega) \times L^2(\omega).
	$$
	
	Then, there exists a constant $c_0=c_0(\omega,\bm{\theta})>0$ such that
	$$
	\left\{\sum_\alpha\|\eta_\alpha\|^2_{H^1(\omega)} + \|\eta_3\|^2_{L^2(\omega)}\right\}^{1/2} \le c_0 \left\{\sum_{\alpha,\beta}\| \gamma_{\alpha\beta}(\bm{\eta})\|_{L^2(\omega)}^2\right\}^{1/2},
	$$
	for all $\bm{\eta}=(\eta_i)\in\bm{V}_M(\omega)$.
	\qed
\end{theorem}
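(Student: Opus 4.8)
The plan is to follow the strategy of Ciarlet \& Lods and obtain the inequality from three ingredients: a preliminary Korn-type estimate whose lower-order remainder embeds compactly, the injectivity of the linear operator $\bm{\eta}\mapsto(\gamma_{\alpha\beta}(\bm{\eta}))$ on $\bm{V}_M(\omega)$, and a compactness-and-contradiction argument combining the two.

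First I would prove the preliminary estimate. From the definition of $\gamma_{\alpha\beta}$ one has $\partial_\alpha\eta_\beta+\partial_\beta\eta_\alpha=2\gamma_{\alpha\beta}(\bm{\eta})+2\Gamma^\sigma_{\alpha\beta}\eta_\sigma+2b_{\alpha\beta}\eta_3$, and since $\bm{\theta}\in\mathcal{C}^3(\overline{\omega};\mathbb{E}^3)$ the coefficients $\Gamma^\sigma_{\alpha\beta}$ and $b_{\alpha\beta}$ are of class $\mathcal{C}^1(\overline{\omega})$; hence the symmetrised gradient of $(\eta_1,\eta_2)$ belongs to $L^2(\omega)$, and the two-dimensional Korn inequality without boundary conditions (a consequence of J.L. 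Lions' lemma) gives
\[
\sum_\alpha\|\eta_\alpha\|_{H^1(\omega)}\le C\Big(\sum_\alpha\|\eta_\alpha\|_{L^2(\omega)}+\|\eta_3\|_{L^2(\omega)}+\sum_{\alpha,\beta}\|\gamma_{\alpha\beta}(\bm{\eta})\|_{L^2(\omega)}\Big).
\]
The crucial point is to \emph{downgrade} the remainder $\|\eta_3\|_{L^2(\omega)}$ to $\|\eta_3\|_{H^{-1}(\omega)}$, so that it becomes compact, and this is exactly where ellipticity is used: since $\bm{\theta}(\overline{\omega})$ is elliptic, the second fundamental form $(b_{\alpha\beta})$ is pointwise invertible on $\overline{\omega}$ with a $\mathcal{C}^1(\overline{\omega})$-bounded inverse, so the relation above can be solved for $\eta_3$; differentiating and applying J.L. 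Lions' lemma to $\eta_3$ itself --- equivalently, noting via the Saint-Venant compatibility of the symmetrised gradient that $\eta_3$ satisfies a second-order \emph{uniformly elliptic} scalar equation whose ellipticity is precisely $\det(b_{\alpha\beta})>0$ --- one arrives, after combining, at a constant $C$ such that
\[
\Big\{\sum_\alpha\|\eta_\alpha\|^2_{H^1(\omega)}+\|\eta_3\|^2_{L^2(\omega)}\Big\}^{1/2}\le C\Big\{\sum_\alpha\|\eta_\alpha\|^2_{L^2(\omega)}+\|\eta_3\|^2_{H^{-1}(\omega)}+\sum_{\alpha,\beta}\|\gamma_{\alpha\beta}(\bm{\eta})\|^2_{L^2(\omega)}\Big\}^{1/2}
\]
for all $\bm{\eta}\in\bm{V}_M(\omega)$.

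Second, I would invoke the \emph{infinitesimal rigid displacement lemma on an elliptic surface}: if $\bm{\eta}\in\bm{V}_M(\omega)$ satisfies $\gamma_{\alpha\beta}(\bm{\eta})=0$ in $\omega$, then $\bm{\eta}=\bm{0}$. Here the vanishing of $\eta_\alpha$ on $\gamma$ together with ellipticity are essential: invertibility of $(b_{\alpha\beta})$ again expresses $\eta_3$ through $\eta_\alpha$, the remaining system for $(\eta_1,\eta_2)$ is elliptic, and an integration by parts exploiting $\eta_\alpha\in H^1_0(\omega)$ forces $\eta_\alpha=0$, hence $\eta_3=0$; this is the (known) rigidity of such surface pieces, related to the rigidity of ovaloids.

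Finally, I would argue by contradiction. If the asserted inequality failed, there would exist $(\bm{\eta}^k)\subset\bm{V}_M(\omega)$ with $\sum_\alpha\|\eta^k_\alpha\|^2_{H^1(\omega)}+\|\eta^k_3\|^2_{L^2(\omega)}=1$ and $\sum_{\alpha,\beta}\|\gamma_{\alpha\beta}(\bm{\eta}^k)\|_{L^2(\omega)}\to0$. The preliminary estimate shows $(\bm{\eta}^k)$ is bounded in $H^1(\omega)\times H^1(\omega)\times L^2(\omega)$; along a subsequence, $\eta^k_\alpha\rightharpoonup\eta_\alpha$ in $H^1_0(\omega)$ hence $\eta^k_\alpha\to\eta_\alpha$ in $L^2(\omega)$ by Rellich--Kondrachov, and $\eta^k_3\rightharpoonup\eta_3$ in $L^2(\omega)$ hence $\eta^k_3\to\eta_3$ in $H^{-1}(\omega)$ since $L^2(\omega)\hookrightarrow H^{-1}(\omega)$ is compact. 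Weak sequential continuity of $\bm{\eta}\mapsto(\gamma_{\alpha\beta}(\bm{\eta}))$ yields $\gamma_{\alpha\beta}(\bm{\eta})=0$, so the rigidity lemma gives $\bm{\eta}=\bm{0}$; inserting $\bm{\eta}^k$ into the preliminary estimate, whose right-hand side now tends to $0$, contradicts the normalisation. I expect the construction of the preliminary estimate with the $H^{-1}$-remainder for $\eta_3$ to be the main obstacle: it is the step that genuinely relies on ellipticity --- for a non-elliptic surface both this estimate and the theorem itself are false --- while the rigidity lemma is the other non-elementary input.
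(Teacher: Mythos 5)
You state the preliminary inequality of your first step for $\bm{\eta}\in\bm{V}_M(\omega)$, but the mechanism you sketch for it (Korn without boundary conditions for $(\eta_1,\eta_2)$, invertibility of $(b_{\alpha\beta})$, Saint--Venant compatibility giving a second-order elliptic scalar equation for $\eta_3$, Lions' lemma) never uses the boundary condition $\eta_\alpha\in H^1_0(\omega)$ --- and without that condition the inequality you want is false. Indeed, the fields with $\gamma_{\alpha\beta}(\bm{\eta})=0$ and no boundary condition are the infinitesimal bendings of the elliptic surface piece, an infinite-dimensional space (classically described via Vekua's generalized analytic functions; a spherical cap already exhibits this); on that closed subspace of $H^1(\omega)\times H^1(\omega)\times L^2(\omega)$ your estimate would make the $H^1\times H^1\times L^2$ norm equivalent to the compactly weaker $L^2\times L^2\times H^{-1}$ norm, forcing the subspace to be finite-dimensional. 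So any correct proof of the preliminary estimate must exploit the clamping of the \emph{entire} boundary in an essential way, and the route you describe cannot supply it. Concretely, the scalar equation $\partial_{22}(b_{11}\eta_3)-2\partial_{12}(b_{12}\eta_3)+\partial_{11}(b_{22}\eta_3)=f$, $f\in H^{-2}(\omega)$, comes with \emph{no} boundary condition on $\eta_3$, so elliptic theory only gives interior control; an up-to-the-boundary estimate of the type $\|u\|_{L^2(\omega)}\le C(\|Lu\|_{H^{-2}(\omega)}+\|u\|_{H^{-1}(\omega)})$ without boundary conditions fails already for $L=\Delta$ on harmonic functions. The variant ``differentiate and apply Lions' lemma to $\eta_3$'' is circular: $\|\partial_\mu\eta_3\|_{H^{-1}(\omega)}$ can only be reached through $\tfrac12(\partial_\alpha\eta_\beta+\partial_\beta\eta_\alpha)$, whose $L^2$ norm reintroduces $\|\eta_3\|_{L^2(\omega)}$ with a constant that cannot be absorbed.

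Note also that the paper itself does not prove this theorem: it quotes it from Ciarlet \& Lods and Ciarlet \& Sanchez-Palencia (Theorem~2.7-3 in Ciarlet's book). In those proofs the boundary control enters elsewhere: one eliminates $\eta_3$ using the invertibility of $(b_{\alpha\beta})$ and studies the resulting \emph{first-order system for $(\eta_1,\eta_2)$}, which is elliptic in the Agmon--Douglis--Nirenberg sense precisely because the surface is elliptic, and for which the Dirichlet condition $\eta_\alpha=0$ on the whole of $\gamma$ satisfies the complementing condition; this (together with a uniqueness argument and a compactness step at that level) yields the key inequality bounding $\{\sum_\alpha\|\eta_\alpha\|^2_{L^2(\omega)}+\|\eta_3\|^2_{H^{-1}(\omega)}\}^{1/2}$ by $\{\sum_{\alpha,\beta}\|\gamma_{\alpha\beta}(\bm{\eta})\|^2_{H^{-1}(\omega)}\}^{1/2}$ on $\bm{V}_M(\omega)$, after which the theorem follows by combining with the Korn-type inequality with compact remainders --- no final contradiction argument is even needed. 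Your rigidity lemma in step two is a true statement, but it is itself a by-product of this machinery rather than an independent elementary input; quoting it is legitimate, whereas the preliminary estimate, where you correctly locate the main difficulty, is not obtainable by the argument you propose.
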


The inequality presented above, derived from \cite{CiaLods1996a} and \cite{CiaSanPan1996} (see also Theorem~2.7-3 in \cite{Ciarlet2000}), serves as an instance of a \emph{Korn's inequality on a surface}. Specifically, it offers an estimate of an appropriate norm of a displacement field defined on a surface in terms of an appropriate norm of a particular ``measure of strain'' (here, the linearised change of metric tensor) associated with the given displacement field.

\section{The three-dimensional obstacle problem for a ``general'' linearly elastic shell} \label{Sec:2}
\label{sec2}

Let $\omega$ be a domain in $\mathbb{R}^2$, let $\gamma := \partial \omega$, and let $\gamma_0$ be a non-empty relatively open subset of $\gamma$. For each $\varepsilon > 0$, we define the sets
$$
\Omega^\varepsilon = \omega \times (-\varepsilon, \varepsilon), \quad \Gamma^\varepsilon_0 := \gamma_0 \times (-\varepsilon, \varepsilon), \quad\textup{and}\quad \Gamma^\varepsilon_{\pm}:=\omega\times\{\pm\varepsilon\},
$$
and we let $x^\varepsilon = (x^\varepsilon_i)$ denote a generic point in the set $\overline{\Omega^\varepsilon}$. We also define $\partial^\varepsilon_i := \partial / \partial x^\varepsilon_i$. Consequently, we have $x^\varepsilon_\alpha = y_\alpha$ and $\partial^\varepsilon_\alpha = \partial_\alpha$.

Given an immersion $\bm{\theta} \in \mathcal{C}^3(\overline{\omega}; \mathbb{E}^3)$ and $\varepsilon > 0$, consider a \emph{shell} with \emph{middle surface} $\bm{\theta}(\overline{\omega})$ and \emph{constant thickness} $2\varepsilon$. This means that the \emph{reference configuration} of the shell is the set $\bm{\Theta}(\overline{\Omega^\varepsilon})$, where the mapping $\bm{\Theta} : \overline{\Omega^\varepsilon} \to \mathbb{E}^3$ is defined by
\begin{equation}
	\label{Theta}
	\bm{\Theta}(x^\varepsilon) := \bm{\theta}(y) + x^\varepsilon_3 \bm{a}^3(y), \quad \textup{at each point } x^\varepsilon = (y, x^\varepsilon_3) \in \overline{\Omega^\varepsilon}.
\end{equation}

One can then show (cf., e.g., Theorem~3.1-1 of \cite{Ciarlet2000}) that, if $\varepsilon > 0$ is sufficiently small, such a mapping $\bm{\Theta} \in \mathcal{C}^2(\overline{\Omega^\varepsilon}; \mathbb{E}^3)$ is an \emph{immersion}, in the sense that the three vectors
$$
\bm{g}^\varepsilon_i(x^\varepsilon) := \partial^\varepsilon_i \bm{\Theta}(x^\varepsilon)
$$
are linearly independent at each point $x^\varepsilon \in \overline{\Omega^\varepsilon}$. These vectors constitute the \emph{covariant basis} at the point $\bm{\Theta}(x^\varepsilon)$, while the three vectors $\bm{g}^{j, \varepsilon}(x^\varepsilon)$, defined by the relations
$$
\bm{g}^{j, \varepsilon}(x^\varepsilon) \cdot \bm{g}^\varepsilon_i(x^\varepsilon) = \delta^j_i,
$$
form the \emph{contravariant basis} at the same point. In the sequel, it will be implicitly assumed that $\varepsilon > 0$ is \emph{small enough} so that $\bm{\Theta} : \overline{\Omega^\varepsilon} \to \mathbb{E}^3$ is an \emph{immersion}.

The \emph{metric tensor associated with the immersion} $\bm{\Theta}$ is defined by means of its \emph{covariant components}
$$
g^\varepsilon_{ij} := \bm{g}^\varepsilon_i \cdot \bm{g}^\varepsilon_j \in \mathcal{C}^1(\overline{\Omega^\varepsilon}),
$$
or by means of its \emph{contravariant components}
$$
g^{ij, \varepsilon} := \bm{g}^{i, \varepsilon} \cdot \bm{g}^{j, \varepsilon} \in \mathcal{C}^1(\overline{\Omega^\varepsilon}).
$$

Note that the symmetric matrix field $(g^{ij, \varepsilon})$ is the inverse of the matrix field $(g^\varepsilon_{ij})$, that $\bm{g}^{j, \varepsilon} = g^{ij, \varepsilon} \bm{g}^\varepsilon_i$ and $\bm{g}^\varepsilon_i = g^\varepsilon_{ij} \bm{g}^{j, \varepsilon}$, and that the \emph{volume element} in $\bm{\Theta}(\overline{\Omega^\varepsilon})$ is given at each point $\bm{\Theta}(x^\varepsilon)$, $x^\varepsilon \in \overline{\Omega^\varepsilon}$, by $\sqrt{g^\varepsilon(x^\varepsilon)} \, \mathrm{d}x^\varepsilon$, where
$$
g^\varepsilon := \det(g^\varepsilon_{ij}) \in \mathcal{C}^1(\overline{\Omega^\varepsilon}).
$$

The \emph{Christoffel symbols} associated with the immersion $\bm{\Theta}$ are defined by
$$
\Gamma^{p, \varepsilon}_{ij} := \partial_i \bm{g}^\varepsilon_j \cdot \bm{g}^{p, \varepsilon} = \Gamma^{p, \varepsilon}_{ji} \in \mathcal{C}^0(\overline{\Omega^\varepsilon}),
$$
and note that $\Gamma^{3, \varepsilon}_{\alpha 3} = \Gamma^{p, \varepsilon}_{33} = 0$.
Given a vector field $\bm{v}^\varepsilon = (v^\varepsilon_i) \in \mathcal{C}^1(\overline{\Omega^\varepsilon}; \mathbb{R}^3)$, the associated vector field
$$
\hat{\bm{v}}^\varepsilon := v^\varepsilon_i \bm{g}^{i, \varepsilon}
$$
can be interpreted as a \emph{displacement field} of the reference configuration $\overline{\hat{\Omega}^\varepsilon}:=\bm{\Theta}(\overline{\Omega^\varepsilon})$ of the shell, defined by means of its \emph{covariant components} $v^\varepsilon_i$ over the vectors $\bm{g}^{i, \varepsilon}$ of the contravariant basis in the reference configuration.

If the norms $\|v^\varepsilon_i\|_{\mathcal{C}^1(\overline{\Omega^\varepsilon})}$ are sufficiently small, the mapping $(\bm{\Theta} + v^\varepsilon_i \bm{g}^{i, \varepsilon})$ is also an immersion. Consequently, the metric tensor of the \emph{deformed configuration} $(\bm{\Theta} + v^\varepsilon_i \bm{g}^{i, \varepsilon})(\overline{\Omega^\varepsilon})$ can be defined by means of its covariant components
\begin{equation*}
	g^\varepsilon_{ij}(\bm{v}^\varepsilon) := (\bm{g}^\varepsilon_i + \partial^\varepsilon_i \tilde{\bm{v}}^\varepsilon) \cdot (\bm{g}^\varepsilon_j + \partial^\varepsilon_j \tilde{\bm{v}}^\varepsilon) \\
	= g^\varepsilon_{ij} + \bm{g}^\varepsilon_i \cdot \partial^\varepsilon_j \tilde{\bm{v}}^\varepsilon + \partial^\varepsilon_i \tilde{\bm{v}}^\varepsilon \cdot \bm{g}^\varepsilon_j + \partial^\varepsilon_i \tilde{\bm{v}}^\varepsilon \cdot \partial^\varepsilon_j \tilde{\bm{v}}^\varepsilon.
\end{equation*}

The linear part with respect to $\tilde{\bm{v}}^\varepsilon$ in the difference $(g^\varepsilon_{ij}(\bm{v}^\varepsilon) - g^\varepsilon_{ij}) / 2$ is called the \emph{linearised strain tensor} associated with the displacement field $v^\varepsilon_i \bm{g}^{i, \varepsilon}$, whose covariant components are defined by:
$$
e^\varepsilon_{i \| j}(\bm{v}^\varepsilon) := \frac{1}{2} \left( \bm{g}^\varepsilon_i \cdot \partial_j^\varepsilon \tilde{\bm{v}}^\varepsilon + \partial^\varepsilon_i \tilde{\bm{v}}^\varepsilon \cdot \bm{g}^\varepsilon_j \right) = \frac{1}{2} (\partial^\varepsilon_j v^\varepsilon_i + \partial^\varepsilon_i v^\varepsilon_j) - \Gamma^{p, \varepsilon}_{ij} v^\varepsilon_p = e_{j \| i}^\varepsilon(\bm{v}^\varepsilon).
$$

The functions $e^\varepsilon_{i\|j}(\bm{v}^\varepsilon)$ are referred to as the \emph{linearised strains in curvilinear coordinates} associated with the displacement field $v^\varepsilon_i \bm{g}^{i, \varepsilon}$.

Throughout this paper, we assume that, for each $\varepsilon > 0$, the reference configuration $\bm{\Theta}(\overline{\Omega^\varepsilon})$ of the shell is a \emph{natural state} (i.e., stress-free) and that the material constituting the shell is \emph{homogeneous}, \emph{isotropic}, and \emph{linearly elastic}. The behaviour of such an elastic material is governed by its two \emph{Lamé constants} $\lambda \ge 0$ and $\mu > 0$ (for details, see, e.g., Section~3.8 of \cite{Ciarlet1988}).

We also assume that the shell is subjected to \emph{applied body forces} with density per unit volume defined by their covariant components $f^{i,\varepsilon} \in L^2(\Omega^\varepsilon)$, and to a \emph{homogeneous boundary condition of place} along the portion $\Gamma^\varepsilon_0$ of its lateral face (i.e., the displacement vanishes on $\Gamma^\varepsilon_0$).

In this paper, we consider a specific \emph{obstacle problem} for such a shell, in the sense that the shell is also subjected to an \emph{interior normal compliance contact condition} similar to the one suggested in~\cite{Rodri2018}. Before introducing this new condition, let us recall how the normal compliance contact conditions was formulated in~\cite{Rodri2018}.
The normal compliance contact condition requires that the magnitude of the displacement of the shell along the normal to the points of the undeformed reference configuration is less than or equal to the gap along the normal to the points of the undeformed reference configuration between the shell and the obstacle. In this framework, the obstacle is not necessarily a plane, as it was instead considered in~\cite{CiaPie2018b,CiaPie2018bCR,CiaMarPie2018b,CiaMarPie2018,Pie-2022-interior,Pie-2022-jde,Pie2023}. However, the normal compliance contact condition comes with the limitation that we are \emph{expected} to know which part of the boundary of the shell engages contact with the obstacle. In the framework of linearised elasticity, where the displacements are \emph{infinitesimal}, this contact condition appears to be sound.

Denote by $\hat{\bm{\nu}}^\varepsilon$ the outer unit normal vector field to the boundary of $\hat{\Omega}^\varepsilon$. Denote by $\hat{s}^\varepsilon:\partial\hat{\Omega}^\varepsilon \to \mathbb{R}^+\cup \{0\}$ the \emph{gap function} measuring the distance between a point $\hat{x}^\varepsilon \in \partial\hat{\Omega}^\varepsilon$ and the obstacle along the vector $\hat{\bm{\nu}}^\varepsilon$. Since the immersion $\bm{\theta}$ is of class $\mathcal{C}^3(\overline{\omega};\mathbb{E}^3)$, the gap function $\hat{s}^\varepsilon$ is of class $\mathcal{C}^2(\partial\hat{\Omega}^\varepsilon)$.

If the linearly elastic shell under consideration is subjected to the action of a displacement field $\hat{\bm{v}}^\varepsilon$, we require that:
\begin{equation}
	\label{NCC-Cart}
	\hat{\bm{v}}^\varepsilon(\hat{x}^\varepsilon) \cdot \hat{\bm{\nu}}^\varepsilon(\hat{x}^\varepsilon)\le \hat{s}^\varepsilon(\hat{x}^\varepsilon),\quad\textup{ for all (or, possibly, a.a.) } \hat{x}^\varepsilon \in \partial\hat{\Omega}^\varepsilon.
\end{equation}

The latter condition is actually stronger than what we need, as we typically need to prescribe it only on either the upper or lower face of the shell, i.e., one between the sets $\Gamma^\varepsilon_+$ and $\Gamma^\varepsilon_-$, respectively. 
The previous definition of gap function can be recast in curvilinear coordinates, by defining the mapping $s^\varepsilon:\partial\Omega^\varepsilon \to \mathbb{R}^+\cup\{0\}$ in a way such that:
\begin{equation*}
	s^\varepsilon(x^\varepsilon):=\hat{s}^\varepsilon(\hat{x}^\varepsilon),\quad\textup{for all }x^\varepsilon\in\partial\Omega^\varepsilon,
\end{equation*}
and observe that this mapping is also of class $\mathcal{C}^2(\partial\Omega^\varepsilon)$. Since $\hat{\bm{v}}^\varepsilon=v_i^\varepsilon \bm{g}^{i,\varepsilon}$, and since $\hat{\bm{\nu}}^\varepsilon=-\bm{g}^{3,\varepsilon}$, the normal compliance contact condition~\eqref{NCC-Cart} in curvilinear coordinates takes the following form:
\begin{equation}
	\label{NCC-Curv}
	-v_3^\varepsilon(x^\varepsilon) \le s^\varepsilon(x^\varepsilon),\quad\textup{ for all (or, possibly, a.a.) }x^\varepsilon \in \Gamma^\varepsilon_- \cup \Gamma^\varepsilon_+.
\end{equation}

In the same fashion, we denote the gap function associated with the middle surface $\bm{\theta}(\overline{\omega})$ by $\hat{s}:\bm{\theta}(\overline{\omega}) \to \mathbb{R}^+\cup \{0\}$. This function measures the distance between the middle surface $\bm{\theta}(\overline{\omega})$ and the obstacle along the normal $\bm{a}^3(y)$ to a point $\bm{\theta}(y)$ of the middle surface, for all $y\in\overline{\omega}$.

Let us observe that, in light of~\eqref{Theta}, the points in the undeformed reference configuration of the shell are obtained by moving along the lines normal to $\bm{\theta}(\overline{\omega})$. This allows us to extend the gap function $s^\varepsilon$ to the entire set $\overline{\Omega^\varepsilon}$ in the following fashion:
\begin{equation*}
	s^\varepsilon(x^\varepsilon):=s(y)+x_3^\varepsilon,\quad\textup{for all } x^\varepsilon=(y,x_3^\varepsilon)\in\overline{\Omega^\varepsilon}.
\end{equation*}

Clearly, the same operation can be performed in terms of $\hat{s}^\varepsilon$ and $\hat{s}$ although the relation expressed in curvilinear coordinates is the one we will be resorting to in the remainder of this paper. With this in mind, starting from~\eqref{NCC-Curv}, we are in position to introduce the \emph{interior normal compliance contact condition}, which amounts to requiring:
\begin{equation}
	\label{INCC-Curv}
	-v_3^\varepsilon(x^\varepsilon) \le s^\varepsilon(x^\varepsilon),\quad\textup{ for all (or, possibly, a.a.) }x^\varepsilon \in \Omega^\varepsilon.
\end{equation}

Under the interior normal compliance contact condition, at every point on the undeformed reference configuration of the shell, the normal component of the shell displacement evaluated at the points of the undeformed configuration must not exceed the initial gap between the shell and the obstacle, measured along the normal to the undeformed shell at that point.
The interior normal compliance contact condition is less general than the confinement condition considered in~\cite{CiaPie2018b,CiaPie2018bCR,CiaMarPie2018b,CiaMarPie2018,Pie-2022-interior,Pie-2022-jde,Pie2020-1,Pie2023}, which requires that \emph{all} the points of the deformed reference configuration do not cross an obstacle made of a finite number of planes. While the confinement condition appears to be suitable for non-linear obstacle problems as well, the interior normal compliance contact condition seems to be more suitable for obstacle problems characterised by \emph{small} deformations, for which it can be forecast which part of the boundary will engage contact with the obstacle.

We also observe that the interior normal compliance contact condition~\eqref{INCC-Curv} is stronger than the normal compliance contact condition considered in~\cite{Rodri2018}, where only the points of the shell that were likely to engage contact with the obstacle were required to satisfy the constraint.
Notice that, in light of the absolute continuity of Sobolev functions along segments parallel to the axes of the orthonormal system under consideration, if a function satisfies the interior normal compliance contact condition, then it satisfies the normal compliance contact condition considered in~\cite{Rodri2018}. Even though the contact condition we are considering in this paper is stronger than the one considered in~\cite{Rodri2018}, we observe first that it is reasonable from the physical point, as it applies to all the points of the deformed reference configuration, and second that is the key for justifying Koiter's model for a larger class of surfaces than those to which the ``density property'' originally formulated in~\cite{CiaMarPie2018b,CiaMarPie2018} applies.

The mathematical models characterised by the interior normal compliance contact condition taken into account here do not account for any traction forces. Indeed, classical mechanics dictates that no traction forces can be applied to the portion of the three-dimensional shell boundary in contact with the obstacle. Friction is not considered in this analysis.

The mathematical modelling of such an \emph{obstacle problem for a linearly elastic shell} is now clear. Apart from the confinement condition, the rest, including the \emph{function space} and the expression of the quadratic \emph{energy} $J^\varepsilon$, follows classical formulations (cf.~\cite{Ciarlet2000}). Specifically, let
$$
A^{ijk\ell,\varepsilon} := \lambda g^{ij,\varepsilon} g^{k\ell,\varepsilon} + \mu \left(g^{ik, \varepsilon} g^{j\ell,\varepsilon} + g^{i\ell,\varepsilon} g^{jk,\varepsilon}\right) = A^{jik\ell,\varepsilon} = A^{k\ell ij,\varepsilon},
$$
denote the contravariant components of the \emph{elasticity tensor} of the elastic material constituting the shell. The unknown of the problem is the vector field $\bm{u}^\varepsilon = (u^\varepsilon_i)$, where the functions $u^\varepsilon_i:\overline{\Omega^\varepsilon} \to \mathbb{R}$ are the three covariant components of the unknown ``three-dimensional'' displacement vector field $u^\varepsilon_i \bm{g}^{i,\varepsilon}$ of the reference configuration of the shell. The function $\bm{u}^\varepsilon$ minimises the quadratic energy $J^\varepsilon:\bm{H}^1(\Omega^\varepsilon) \to \mathbb{R}$ defined by
$$
J^\varepsilon(\bm{v}^\varepsilon):= \dfrac{1}{2} \int_{\Omega^\varepsilon} A^{ijk\ell,\varepsilon} e^\varepsilon_{k\|\ell}(\bm{v}^\varepsilon) e^\varepsilon_{i\|j}(\bm{v}^\varepsilon) \sqrt{g^\varepsilon} \dd x^\varepsilon - \int_{\Omega^\varepsilon} f^{i,\varepsilon} v^\varepsilon_i \sqrt{g^\varepsilon} \dd x^\varepsilon,
$$
for each $\bm{v}^\varepsilon = (v^\varepsilon_i) \in \bm{H}^1(\Omega^\varepsilon)$, over the \emph{set of admissible displacements} defined by:
\begin{equation*}
	\bm{U}(\Omega^\varepsilon) := \{\bm{v}^\varepsilon = (v^\varepsilon_i) \in \bm{H}^1(\Omega^\varepsilon); \, \bm{v}^\varepsilon = \bm{0} \textup{ on } \Gamma^\varepsilon_0 \textup{ and } v_3^\varepsilon+s^\varepsilon \ge 0 \textup{ a.e. in } \Omega^\varepsilon\}.
\end{equation*}

The solution to this \emph{minimisation problem} exists, is unique, and can also be characterised as the solution of a set of appropriate variational inequalities (cf.~\cite{Rodri2018}).

\begin{customprob}{$\mathcal{P}(\Omega^\varepsilon)$}\label{problem0}
	Find $\bm{u}^\varepsilon \in \bm{U} (\Omega^\varepsilon)$ that satisfies the following variational inequalities:
	$$
	\int_{\Omega^\varepsilon} 
	A^{ijk\ell, \varepsilon} e^\varepsilon_{k\| \ell}  (\bm{u}^\varepsilon)
	\left( e^\varepsilon_{i\| j}  (\bm{v}^\varepsilon) -  e^\varepsilon_{i\| j}  (\bm{u}^\varepsilon)  \right) \sqrt{g^\varepsilon} \dd x^\varepsilon \geq \int_{\Omega^\varepsilon} f^{i , \varepsilon} (v^\varepsilon_i - u^\varepsilon_i)\sqrt{g^\varepsilon} \dd x^\varepsilon,
	$$
	for all $\bm{v}^\varepsilon = (v^\varepsilon_i) \in \bm{U}(\Omega^\varepsilon)$.
	\bqed	
\end{customprob}

The following result derives from Korn's inequality (Theorem~1.7-4 in~\cite{Ciarlet2000}) and classical arguments (cf., e.g., Theorem~6.1-2 in~\cite{PGCLNFAA}).

\begin{theorem} \label{t:2}
	The quadratic minimisation problem: Find a vector field $\bm{u}^\varepsilon \in \bm{U}(\Omega^\varepsilon)$ such that
	$$
	J^\varepsilon (\bm{u}^\varepsilon) = \inf_{\bm{v}^\varepsilon \in \bm{U} (\Omega^\varepsilon)} J^\varepsilon (\bm{v}^\varepsilon)
	$$
	has one and only one solution. Besides, the vector field $\bm{u}^\varepsilon$ is also the unique solution of Problem~\ref{problem0}.
	\qed
\end{theorem}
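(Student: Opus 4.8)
The plan is to recognise Theorem~\ref{t:2} as an instance of the classical theory of quadratic minimisation over a non-empty, closed, convex subset of a Hilbert space: once the admissible set is shown to have the right structural properties and the energy $J^\varepsilon$ is shown to be the sum of a continuous, symmetric, coercive quadratic form and a continuous linear form, both the existence and uniqueness of the minimiser and its equivalence with the variational inequality of Problem~\ref{problem0} follow from the standard minimisation theorem (cf. Theorem~6.1-2 in~\cite{PGCLNFAA}) and the Euler--Lagrange characterisation of a minimiser over a convex set.

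First I would record the structural properties of $\bm{U}(\Omega^\varepsilon)$. It is non-empty, since $\bm{v}^\varepsilon=\bm{0}$ belongs to it (the gap satisfies $s^\varepsilon\ge 0$); it is convex, since both defining constraints, $\bm{v}^\varepsilon=\bm{0}$ on $\Gamma^\varepsilon_0$ and $v_3^\varepsilon+s^\varepsilon\ge 0$ a.e. in $\Omega^\varepsilon$, are preserved under convex combinations; and it is closed in $\bm{H}^1(\Omega^\varepsilon)$, since the trace operator onto $\Gamma^\varepsilon_0$ is continuous and $\bm{H}^1$-convergence implies, along a subsequence, a.e. convergence, so the pointwise inequality passes to the limit.

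Next I would verify the two analytic ingredients. Writing $B^\varepsilon(\bm{u}^\varepsilon,\bm{v}^\varepsilon):=\int_{\Omega^\varepsilon}A^{ijk\ell,\varepsilon}e^\varepsilon_{k\|\ell}(\bm{u}^\varepsilon)e^\varepsilon_{i\|j}(\bm{v}^\varepsilon)\sqrt{g^\varepsilon}\dd x^\varepsilon$ and $L^\varepsilon(\bm{v}^\varepsilon):=\int_{\Omega^\varepsilon}f^{i,\varepsilon}v^\varepsilon_i\sqrt{g^\varepsilon}\dd x^\varepsilon$, the bilinear form $B^\varepsilon$ is symmetric and, because the functions $A^{ijk\ell,\varepsilon}$ and $\sqrt{g^\varepsilon}$ are continuous on the compact set $\overline{\Omega^\varepsilon}$ and the maps $\bm{v}^\varepsilon\mapsto e^\varepsilon_{i\|j}(\bm{v}^\varepsilon)$ are continuous from $\bm{H}^1(\Omega^\varepsilon)$ into $L^2(\Omega^\varepsilon)$, it is continuous on $\bm{H}^1(\Omega^\varepsilon)$; similarly $L^\varepsilon$ is continuous since $f^{i,\varepsilon}\in L^2(\Omega^\varepsilon)$. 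For coercivity I would combine the uniform positive-definiteness of the three-dimensional elasticity tensor --- there is a constant $C_e>0$ with $A^{ijk\ell,\varepsilon}(x^\varepsilon)t_{k\ell}t_{ij}\ge C_e\sum_{i,j}|t_{ij}|^2$ for all symmetric $(t_{ij})$ and all $x^\varepsilon\in\overline{\Omega^\varepsilon}$, a consequence of $\lambda\ge 0$ and $\mu>0$ --- with the lower bound $\sqrt{g^\varepsilon}\ge g_0>0$ on $\overline{\Omega^\varepsilon}$ (valid since $\bm{\Theta}$ is an immersion) and with Korn's inequality in curvilinear coordinates (Theorem~1.7-4 in~\cite{Ciarlet2000}), which furnishes a constant such that $\|\bm{v}^\varepsilon\|_{\bm{H}^1(\Omega^\varepsilon)}\le C\big\{\sum_{i,j}\|e^\varepsilon_{i\| j}(\bm{v}^\varepsilon)\|^2_{L^2(\Omega^\varepsilon)}\big\}^{1/2}$ for every $\bm{v}^\varepsilon\in\bm{H}^1(\Omega^\varepsilon)$ vanishing on $\Gamma^\varepsilon_0$. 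Together these give $B^\varepsilon(\bm{v}^\varepsilon,\bm{v}^\varepsilon)\ge\alpha\|\bm{v}^\varepsilon\|^2_{\bm{H}^1(\Omega^\varepsilon)}$ for every such $\bm{v}^\varepsilon$, in particular for all $\bm{v}^\varepsilon\in\bm{U}(\Omega^\varepsilon)$ and all differences of elements of $\bm{U}(\Omega^\varepsilon)$.

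Finally I would assemble the conclusion. Since $J^\varepsilon(\bm{v}^\varepsilon)=\tfrac12 B^\varepsilon(\bm{v}^\varepsilon,\bm{v}^\varepsilon)-L^\varepsilon(\bm{v}^\varepsilon)$ with $B^\varepsilon$ symmetric, continuous and coercive and $L^\varepsilon$ continuous and linear, $J^\varepsilon$ is strictly convex, continuous and coercive over the non-empty, closed, convex set $\bm{U}(\Omega^\varepsilon)$, so it has a unique minimiser $\bm{u}^\varepsilon$. The characterisation of this minimiser over a convex set is exactly that $B^\varepsilon(\bm{u}^\varepsilon,\bm{v}^\varepsilon-\bm{u}^\varepsilon)\ge L^\varepsilon(\bm{v}^\varepsilon-\bm{u}^\varepsilon)$ for all $\bm{v}^\varepsilon\in\bm{U}(\Omega^\varepsilon)$: one direction follows by differentiating $t\mapsto J^\varepsilon(\bm{u}^\varepsilon+t(\bm{v}^\varepsilon-\bm{u}^\varepsilon))$ at $t=0^+$, the other from the identity $J^\varepsilon(\bm{v}^\varepsilon)=J^\varepsilon(\bm{u}^\varepsilon)+\big[B^\varepsilon(\bm{u}^\varepsilon,\bm{v}^\varepsilon-\bm{u}^\varepsilon)-L^\varepsilon(\bm{v}^\varepsilon-\bm{u}^\varepsilon)\big]+\tfrac12 B^\varepsilon(\bm{v}^\varepsilon-\bm{u}^\varepsilon,\bm{v}^\varepsilon-\bm{u}^\varepsilon)$ together with positivity of $B^\varepsilon$; uniqueness in the variational inequality also follows from coercivity by testing the inequality for one solution with the other and adding. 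This inequality is precisely the system in Problem~\ref{problem0}, which proves both assertions. I do not anticipate a genuine obstacle: the only delicate point is the coercivity of $B^\varepsilon$, which hinges on the validity of Korn's inequality in curvilinear coordinates on $\Omega^\varepsilon$ with the place boundary condition imposed on the non-empty relatively open set $\Gamma^\varepsilon_0$, and on the positive-definiteness of the elasticity tensor --- both available in~\cite{Ciarlet2000}.
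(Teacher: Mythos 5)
Your proposal is correct and follows exactly the route the paper itself indicates: the paper gives no detailed proof but attributes the result to Korn's inequality in curvilinear coordinates (Theorem~1.7-4 in~\cite{Ciarlet2000}) and the classical theory of quadratic minimisation over a non-empty, closed, convex subset of a Hilbert space (Theorem~6.1-2 in~\cite{PGCLNFAA}), which is precisely what you verify and assemble. Your added care in checking that $\bm{0}\in\bm{U}(\Omega^\varepsilon)$ (since $s^\varepsilon\ge 0$) and that coercivity is only needed on differences of admissible fields, which vanish on $\Gamma^\varepsilon_0$, correctly fills in the details the paper leaves implicit.
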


\section{The scaled three-dimensional problem for a family of linearly elastic elliptic membrane shells} \label{sec3}

In Section~\ref{Sec:2}, we analysed an obstacle problem for ``general'' linearly elastic shells. From now on, we will focus on a specific class of shells, as defined below, which was originally introduced in \cite{Ciarlet1996} (see also \cite{Ciarlet2000}).

Consider a linearly elastic shell, subject to the assumptions outlined in Section~\ref{Sec:2}. Such a shell is referred to as a \emph{linearly elastic elliptic membrane shell} (hereafter simply called a \emph{membrane shell}) if the following two additional conditions are satisfied: \emph{first}, $\gamma_0 = \gamma$, meaning that the homogeneous boundary condition of place is imposed over the \emph{entire lateral face} $\gamma \times (-\varepsilon, \varepsilon)$ of the shell; and \emph{second}, its middle surface $\bm{\theta}(\overline{\omega})$ is \emph{elliptic}, as defined in Section~\ref{sec1}.

In this paper, we study the \emph{obstacle problem} (as defined in Section~\ref{sec2}) for a family of membrane shells, all sharing the \emph{same middle surface}, with the thickness $2\varepsilon > 0$ considered as a ``small'' parameter tending to zero. To perform a rigorous asymptotic analysis of the three-dimensional model as $\varepsilon \to 0$, we adopt a (by now standard) methodology first introduced in \cite{CiaDes1979}. This approach involves ``scaling'' the solution of Problem~\ref{problem0}, for $\varepsilon > 0$, over a \emph{fixed domain} $\Omega$, using appropriate \emph{scalings of the unknowns} and \emph{assumptions on the data}.

More specifically, let
$$
\Omega := \omega \times (-1, 1),
$$
let $x = (x_i)$ denote a generic point in the set $\overline{\Omega}$, and let $\partial_i := \partial / \partial x_i$. We denote by $\Gamma_0$ the lateral boundary of $\Omega$, namely the set $\Gamma_0:=\gamma\times(-1,1)$. We also define the sets $\Gamma_{\pm}:=\omega\times\{\pm 1\}$. For each point $x = (x_i) \in \overline{\Omega}$, we define the point $x^\varepsilon = (x^\varepsilon_i)$ by
$$
x^\varepsilon_\alpha := x_\alpha = y_\alpha \quad \textup{and} \quad x^\varepsilon_3 := \varepsilon x_3,
$$
so that $\partial^\varepsilon_\alpha = \partial_\alpha$ and $\partial^\varepsilon_3 = \varepsilon^{-1} \partial_3$. To the unknown $\bm{u}^\varepsilon = (u^\varepsilon_i)$ and the vector fields $\bm{v}^\varepsilon = (v^\varepsilon_i)$ appearing in the formulation of Problem~\ref{problem0} for a membrane shell, we associate the \emph{scaled unknown} $\bm{u}(\varepsilon) = (u_i(\varepsilon))$ and the \emph{scaled vector fields} $\bm{v} = (v_i)$ by defining
$$
u_i(\varepsilon)(x) := u^\varepsilon_i(x^\varepsilon) \quad \textup{and} \quad v_i(x) := v^\varepsilon_i(x^\varepsilon)
$$
for all (or, possibly, a.a.) $x \in \overline{\Omega}$.

With the gap function $s^\varepsilon$, we associate (the following constitutes another assumption on the data) the corresponding \emph{scaled gap function} $s(\varepsilon):\overline{\Omega}\to\mathbb{R}^+ \cup \{0\}$ independent of $\varepsilon$, and defined in a way that
\begin{equation*}
	s(\varepsilon)(x):=s^\varepsilon(x^\varepsilon),\quad\textup{for all }x\in\overline{\Omega},
\end{equation*}
or, equivalently:
\begin{equation*}
	s(\varepsilon)(x)=s(y)+\varepsilon x_3,\quad\textup{for all }x=(y,x_3)\in\overline{\Omega}.
\end{equation*}

Finally, we assume that there exist functions $f^i \in L^2(\Omega)$, independent of $\varepsilon$, such that the following \emph{assumptions on the data} hold:
\begin{equation}
	\label{ass-data}
	f^{i, \varepsilon}(x^\varepsilon) = f^i(x), \quad \textup{for all } x \in \Omega.
\end{equation}

Note that the independence of the Lamé constants from $\varepsilon$, assumed in Section~\ref{Sec:2} in the formulation of Problem~\ref{problem0}, implicitly constitutes another \emph{assumption on the data}.

The variational problem $\mathcal{P}(\varepsilon; \Omega)$, defined next, will serve as the starting point for the asymptotic analysis presented in \cite{CiaMarPie2018}. Define the scaled counterpart of the functions and vector fields introduced in the formulation of Problem~\ref{problem0} as follows:
\begin{equation*}
	\begin{aligned}
		\bm{g}^i(\varepsilon)(x) &:= \bm{g}^{i,\varepsilon}(x^\varepsilon),\quad \textup{ at each } x\in \overline{\Omega},\\
		g(\varepsilon)(x) &:= g^\varepsilon(x^\varepsilon),\\
		A^{ijk\ell}(\varepsilon)(x) &:= A^{ijk\ell,\varepsilon}(x^\varepsilon),\quad \textup{ at each } x\in \overline{\Omega}, \\
		\Gamma^p_{ij}(\varepsilon)(x) &:= \Gamma^{p,\varepsilon}_{ij}(x^\varepsilon),\quad \textup{ at each } x\in \overline{\Omega},\\
		e_{\alpha\|\beta}(\varepsilon;\bm{v}) &:= \frac{1}{2}(\partial_\beta v_\alpha + \partial_\alpha v_\beta) - \Gamma^k_{\alpha\beta}(\varepsilon) v_k = e_{\beta\|\alpha}(\varepsilon;\bm{v}) , \\
		e_{3\|\alpha}(\varepsilon; \bm{v}) &:=\dfrac{1}{2} \left(\dfrac{1}{\varepsilon} \partial_3 v_\alpha + \partial_\alpha v_3\right) - \Gamma^\sigma_{\alpha3}(\varepsilon) v_\sigma=e_{\alpha\|3}(\varepsilon;\bm{v}),\\
		e_{3\|3}(\varepsilon;\bm{v}) &:= \dfrac{1}{\varepsilon} \partial_3 v_3.
	\end{aligned}
\end{equation*}

The next lemma assembles various asymptotic properties as $\varepsilon \to 0$ of functions and vector fields appearing in the formulation of Problem~\ref{problem1} (cf., e.g., Theorems~3.3-1 and~3.3-2 in~\cite{Ciarlet2000}); these properties will be repeatedly used in the proof of the convergence theorem (Theorem~\ref{th1}).

In the next statement, the notation ``$\mathcal{O}(\varepsilon)$'', or ``$\mathcal{O}(\varepsilon^2)$'', stands for a remainder that is of order $\varepsilon$, or $\varepsilon^2$, with respect to the sup-norm over the set $\overline{\Omega}$, and any function, or vector-valued function, of the variable $y \in \overline{\omega}$, such as $a^{\alpha \beta}, b_{\alpha \beta}, \bm{a}^i$, etc. is identified with the function, or vector-valued function, of $x =(y,x_3) \in \overline{\Omega} = \overline{\omega} \times \left[-1, 1\right]$ that takes the same value at $x_3 = 0$ and is independent of $x_3 \in \left[-1, 1\right]$; for brevity, this extension from $\overline{\omega} $ to $\overline{\Omega}$ is designated with the same notation. Recall that $\varepsilon > 0$ is implicitly assumed to be small enough so that $\bm{\Theta} : \overline{\Omega^\varepsilon} \to \mathbb{E}^3$ is an injective immersion.

\begin{lemma}
	\label{lem:2}
	Let $\varepsilon_0$ be defined as in Theorem~3.1-1 of~\cite{Ciarlet2000}. The functions $A^{ijk\ell}(\varepsilon) = A^{jik\ell}(\varepsilon) = A^{k\ell ij}(\varepsilon)$ have the following properties:
	\begin{align*}
		A^{ijk\ell}(\varepsilon)&= A^{ijk\ell}(0) + \mathcal{O}(\varepsilon),\\
		A^{\alpha\beta\sigma 3}(\varepsilon) &= A^{\alpha 333}(\varepsilon)=0,
	\end{align*}
	for all $0<\varepsilon \le \varepsilon_0$, where
	\begin{align*}
		A^{\alpha\beta\sigma\tau}(0) &= \lambda a^{\alpha\beta} a^{\sigma\tau} + \mu (a^{\alpha\sigma}a^{\beta\tau} + a^{\alpha\tau} a^{\beta\sigma}),\\
		A^{\alpha \beta 33} (0) &= \lambda a^{\alpha\beta},\\
		A^{\alpha 3 \sigma 3}(0) &= \mu a^{\alpha\sigma},\\
		A^{3333}(0) &= \lambda + 2\mu,
	\end{align*}
	and there exists a constant $C_e>0$ such that
	$$
	\sum_{i,j}| t_{ij}|^2 \le C_e A^{ijk\ell}(\varepsilon)(x) t_{k\ell} t_{ij},
	$$
	for all $0<\varepsilon \le \varepsilon_0$, all $x \in \overline{\Omega}$, and all symmetric matrices $\{t_{ij}\}_{i,j}$.
	
	The functions $\Gamma^p_{ij}(\varepsilon)$ and $g(\varepsilon)$ have the following properties:
	\begin{align*}
		\Gamma^\sigma_{\alpha\beta}(\varepsilon) &= \Gamma^\sigma_{\alpha\beta}-\varepsilon x_3(\partial_\alpha b^\sigma_\beta + \Gamma^\sigma_{\alpha\tau} b^\tau_\beta - \Gamma^\tau_{\alpha\beta} b^\sigma_\tau) + \mathcal{O}(\varepsilon^2), \\
		\Gamma^3_{\alpha\beta}(\varepsilon) &= b_{\alpha \beta} - \varepsilon x_3 b^\sigma_\alpha b_{\sigma\beta},\\
		\partial_3 \Gamma^p_{\alpha\beta}(\varepsilon)&=\mathcal{O}(\varepsilon),\\
		\Gamma^\sigma_{\alpha 3}(\varepsilon) &= -b^\sigma_\alpha -\varepsilon x_3 b^\tau_\alpha b^\sigma_\tau + \mathcal{O}(\varepsilon^2),\\
		\Gamma^3_{\alpha 3}(\varepsilon)&=\Gamma^p_{33}(\varepsilon )=0,\\
		g(\varepsilon)&=a+\mathcal{O}(\varepsilon),
	\end{align*}
	for all $0<\varepsilon \le \varepsilon_0$ and all $x \in \overline{\Omega}$. In particular then, there exist constants $g_0$ and $g_1$ such that
	$$
	0 < g_0 \le g(\varepsilon)(x) \le g_1 \textup{ for all }0<\varepsilon \le \varepsilon_0 \textup{ and all }x \in \overline{\Omega}.
	$$
	
	The vector fields $\bm{g}_i (\varepsilon)$ and $\bm{g}^j(\varepsilon)$ have the following properties:
	\begin{align*}
		\bm{g}_\alpha(\varepsilon)&=\bm{a}_\alpha-\varepsilon x_3 b^\sigma_\alpha \bm{a}_\sigma,\\
		\bm{g}_3 (\varepsilon)&=\bm{a}_3 , \\
		\bm{g}^\alpha(\varepsilon)&=\bm{a}^\alpha + \varepsilon x_3 b^\alpha_\sigma\bm{a}^\sigma+\mathcal{O}(\varepsilon^2),\\
		\bm{g}^3(\varepsilon)&=\bm{a}^3.
	\end{align*}
	\qed
\end{lemma}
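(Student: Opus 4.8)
The statement to prove is Lemma~\ref{lem:2}, which assembles a collection of asymptotic expansions in $\varepsilon$ for the scaled geometric quantities $A^{ijk\ell}(\varepsilon)$, $\Gamma^p_{ij}(\varepsilon)$, $g(\varepsilon)$, $\bm{g}_i(\varepsilon)$, $\bm{g}^j(\varepsilon)$, together with the uniform coercivity of the scaled elasticity tensor and the uniform positivity of $g(\varepsilon)$. Since this is the classical scaling lemma of Ciarlet--Lods, my plan is to derive everything from the explicit formula~\eqref{Theta} for $\bm{\Theta}$ and the chain-rule relations $\partial^\varepsilon_\alpha=\partial_\alpha$, $\partial^\varepsilon_3=\varepsilon^{-1}\partial_3$ linking derivatives on $\Omega^\varepsilon$ and on the fixed domain $\Omega$.

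The plan is as follows. First I would compute the covariant basis vectors of $\bm{\Theta}$ directly: from $\bm{\Theta}(x^\varepsilon)=\bm{\theta}(y)+x^\varepsilon_3\bm{a}^3(y)$ one gets $\bm{g}^\varepsilon_\alpha=\partial^\varepsilon_\alpha\bm{\Theta}=\bm{a}_\alpha+x^\varepsilon_3\partial_\alpha\bm{a}^3=\bm{a}_\alpha-x^\varepsilon_3 b^\sigma_\alpha\bm{a}_\sigma$ (using $\partial_\alpha\bm{a}_3=-b^\sigma_\alpha\bm{a}_\sigma$ since $\bm{a}^3=\bm{a}_3$) and $\bm{g}^\varepsilon_3=\bm{a}^3=\bm{a}_3$. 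Passing to the scaled domain replaces $x^\varepsilon_3$ by $\varepsilon x_3$, which immediately gives the stated formulas for $\bm{g}_\alpha(\varepsilon)$ and $\bm{g}_3(\varepsilon)$ — and these two are \emph{exact}, no remainder. Next I would invert: writing $\bm{g}^\varepsilon_\alpha=(\delta^\sigma_\alpha-x^\varepsilon_3 b^\sigma_\alpha)\bm{a}_\sigma$, the contravariant vectors are obtained by inverting the matrix $(\delta^\sigma_\alpha-\varepsilon x_3 b^\sigma_\alpha)$, whose Neumann series is $\delta^\alpha_\sigma+\varepsilon x_3 b^\alpha_\sigma+\mathcal{O}(\varepsilon^2)$ uniformly on $\overline{\Omega}$ for $\varepsilon\le\varepsilon_0$ (the $b^\sigma_\alpha$ are bounded since $\bm{\theta}\in\mathcal{C}^3$), giving $\bm{g}^\alpha(\varepsilon)=\bm{a}^\alpha+\varepsilon x_3 b^\alpha_\sigma\bm{a}^\sigma+\mathcal{O}(\varepsilon^2)$ and $\bm{g}^3(\varepsilon)=\bm{a}^3$. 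From $g^\varepsilon_{ij}=\bm{g}^\varepsilon_i\cdot\bm{g}^\varepsilon_j$ and $g^\varepsilon=\det(g^\varepsilon_{ij})$ one reads off $g(\varepsilon)=a+\mathcal{O}(\varepsilon)$; the uniform bounds $0<g_0\le g(\varepsilon)\le g_1$ then follow because $g(\varepsilon)\to a$ uniformly and $a$ is continuous and strictly positive on the compact set $\overline{\omega}$, so for $\varepsilon_0$ small enough $g(\varepsilon)$ stays in a compact subinterval of $(0,\infty)$.

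For the Christoffel symbols I would use $\Gamma^{p,\varepsilon}_{ij}=\partial^\varepsilon_i\bm{g}^\varepsilon_j\cdot\bm{g}^{p,\varepsilon}$. Differentiating the explicit expansion of $\bm{g}^\varepsilon_j$ and pairing with $\bm{g}^{p,\varepsilon}$, collecting powers of $x^\varepsilon_3=\varepsilon x_3$, yields the stated expansions for $\Gamma^\sigma_{\alpha\beta}(\varepsilon)$, $\Gamma^3_{\alpha\beta}(\varepsilon)$ (again exact — affine in $\varepsilon x_3$), $\Gamma^\sigma_{\alpha3}(\varepsilon)$, and the vanishing identities $\Gamma^3_{\alpha3}(\varepsilon)=\Gamma^p_{33}(\varepsilon)=0$, which come straightforwardly from $\bm{g}^\varepsilon_3=\bm{a}_3$ being independent of $x^\varepsilon_3$ and orthogonal to the tangent vectors. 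The relation $\partial_3\Gamma^p_{\alpha\beta}(\varepsilon)=\mathcal{O}(\varepsilon)$ is then immediate because $\Gamma^p_{\alpha\beta}(\varepsilon)$ depends on $x_3$ only through the explicit $\varepsilon x_3$ factors. For the elasticity tensor $A^{ijk\ell}(\varepsilon)=\lambda g^{ij}(\varepsilon)g^{k\ell}(\varepsilon)+\mu(g^{ik}(\varepsilon)g^{j\ell}(\varepsilon)+g^{i\ell}(\varepsilon)g^{jk}(\varepsilon))$, I would substitute the expansions of $g^{ij}(\varepsilon)=\bm{g}^i(\varepsilon)\cdot\bm{g}^j(\varepsilon)$; using $\bm{g}^3(\varepsilon)=\bm{a}^3$ orthogonal to $\bm{a}^\alpha$, the mixed components $g^{\alpha3}(\varepsilon)$ vanish, which forces $A^{\alpha\beta\sigma3}(\varepsilon)=A^{\alpha333}(\varepsilon)=0$ exactly, and the $\mathcal{O}(\varepsilon)$ remainder together with the explicit $A^{\cdots}(0)$ values follow by evaluating at $\varepsilon=0$ where $g^{ij}(0)$ reduces to $a^{\alpha\beta}$ on tangential indices and $1$ on the $33$ entry.

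The one genuinely substantive point — the only place requiring more than bookkeeping — is the uniform coercivity inequality $\sum_{i,j}|t_{ij}|^2\le C_e A^{ijk\ell}(\varepsilon)(x)t_{k\ell}t_{ij}$ for all symmetric $(t_{ij})$, all $x\in\overline{\Omega}$, all $\varepsilon\le\varepsilon_0$. The plan here is standard: for a fixed positive-definite symmetric matrix $(g^{ij})$ the quadratic form $(t_{ij})\mapsto\lambda(g^{ij}t_{ij})^2+2\mu g^{ik}g^{j\ell}t_{ij}t_{k\ell}$ is positive-definite in $(t_{ij})$ since $\lambda\ge0$, $\mu>0$, and $g^{ik}g^{j\ell}t_{ij}t_{k\ell}=|G^{1/2}TG^{1/2}|^2$ where $T=(t_{ij})$ and $G=(g^{ij})$; its smallest eigenvalue is controlled below by (a constant times) the square of the smallest eigenvalue of $G$. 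Since $\bm{g}^i(\varepsilon)\to\bm{a}^i$ uniformly on $\overline{\Omega}$, the matrix field $(g^{ij}(\varepsilon)(x))$ converges uniformly to $(a^{ij}(x))$, whose eigenvalues are bounded below by a positive constant on the compact $\overline{\omega}$; hence for $\varepsilon_0$ small enough the eigenvalues of $(g^{ij}(\varepsilon)(x))$ are uniformly bounded below, and a compactness/continuity argument over the compact set $\{\varepsilon\le\varepsilon_0\}\times\overline{\Omega}$ delivers a single constant $C_e$ working uniformly. I would cite Theorem~3.3-2 of~\cite{Ciarlet2000} for this step, since it is precisely the classical result, and remark that all expansions are nothing but the scaled versions of the formulas for $\bm{\Theta}$'s geometry established in Theorem~3.1-1 of the same reference.
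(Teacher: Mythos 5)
Your proposal is correct: the lemma is exactly the classical scaling result that the paper does not prove but simply quotes (with a \qed) from Theorems~3.3-1 and~3.3-2 of Ciarlet (2000), and your derivation — computing $\bm{g}_i(\varepsilon)$ from \eqref{Theta}, inverting by a Neumann series, reading off the Christoffel symbols, $g(\varepsilon)$ and $A^{ijk\ell}(\varepsilon)$, and obtaining the uniform coercivity from the uniform convergence $g^{ij}(\varepsilon)\to g^{ij}(0)$ plus positivity on the compact set $\overline{\Omega}$ — is precisely the argument of those cited theorems. No gap; this matches the standard proof the paper relies on.
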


Define the space
\begin{equation*}
	\bm{V}(\Omega):=\{\bm{v} = (v_i) \in \bm{H}^1(\Omega); \bm{v} = \bm{0} \textup{ on } \gamma \times (-1,1) \},
\end{equation*}
and, for each $\varepsilon > 0$, define the set
\begin{equation*}
	\bm{U}(\varepsilon;\Omega) := \{\bm{v} = (v_i) \in \bm{V}(\Omega); s(\varepsilon)+v_3\ge 0 \textup{ a.e. in } \Omega\}.
\end{equation*}

The \emph{scaled version} of Problem~\ref{problem0} is formulated as follows.

\begin{customprob}{$\mathcal{P}(\varepsilon;\Omega)$}\label{problem1}
	Find $\bm{u}(\varepsilon) \in \bm{U}(\varepsilon; \Omega)$ that satisfies the variational inequalities
	\begin{equation*}
		\int_\Omega A^{ijk\ell}(\varepsilon) e_{k\| \ell}(\varepsilon; \bm{u}(\varepsilon)) \left(e_{i\| j}(\varepsilon; \bm{v}) - e_{i\|j}(\varepsilon; \bm{u}(\varepsilon))\right) \sqrt{g(\varepsilon)} \dd x 
		\ge \int_\Omega f^i (v_i - u_i(\varepsilon)) \sqrt{g(\varepsilon)} \dd x,
	\end{equation*}
	for all $\bm{v} \in \bm{U}(\varepsilon;\Omega)$.
	\bqed
\end{customprob}

Since Problem~\ref{problem1} is just a re-writing of Problem~\ref{problem0}, it is clear that Problem~\ref{problem1} admits a unique solution (cf., e.g., \cite{CiaMarPie2018}).

Alternatively, one could prove this result independently of the prior knowledge of Problem~\ref{problem0}. One such proof hinges on the following \emph{Korn's inequality in curvilinear coordinates} formulated over the \emph{fixed} domain $\Omega = \omega \times (-1,1)$, according to the following theorem. That the constant $C_1$ that appears in this inequality is \emph{independent of} $\varepsilon > 0$ will play a key role in the derivation of the estimates needed to establish the augmentation of regularity for Problem~\ref{problem2}. For a proof of this inequality, we refer to Theorem~4.1 in~\cite{CiaLods1996b} or Theorem~4.3-1 in~\cite{Ciarlet2000}.

\begin{theorem}\label{korn3D}
	Let there be given a family of linearly elastic elliptic membrane shells with the same middle surface $\bm{\theta}(\overline{\omega})$ and thickness $2 \varepsilon > 0$. Define the space
	$$
	\bm{V}(\Omega) := \{\bm{v} = (v_i) \in \bm{H}^1(\Omega) ; \; \bm{v} = \bm{0} \textup{ on }  \gamma  \times (-1,1)\}.
	$$
	
	Then there exist constants $\varepsilon_1 > 0$ and $C_1 > 0$ such that
	$$
	\left\{\sum_\alpha \left\| v_\alpha \right\|^2_{H^1(\Omega)} + \left\| v_3\right\|^2_{L^2(\Omega)}\right\}^{1/2} \le C_1 \left\{ \sum_{i,j} \left\| e_{i\|j} (\varepsilon; \bm{v}) \right\|^2_{L^2(\Omega)} \right\}^{1/2},
	$$
	for all $0 < \varepsilon \leq \varepsilon_1$ and all $\bm{v} \in \bm{V}(\Omega)$.
	\qed
\end{theorem}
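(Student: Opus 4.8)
The plan is to argue by contradiction and compactness; the only genuine difficulty is the \emph{uniformity in $\varepsilon$} of the constant. Take $\varepsilon_1$ equal to the $\varepsilon_0$ of Lemma~\ref{lem:2}, so that all the asymptotic expansions recorded there are available. Were the asserted inequality false with this $\varepsilon_1$ and $C_1=k$ for each $k\in\mathbb{N}$, one would obtain $\varepsilon^k\in(0,\varepsilon_1]$ and vector fields $\bm{v}^k=(v^k_i)\in\bm{V}(\Omega)$ such that
\[
\sum_\alpha\|v^k_\alpha\|^2_{H^1(\Omega)}+\|v^k_3\|^2_{L^2(\Omega)}=1
\quad\text{for all }k,
\qquad
\sum_{i,j}\|e_{i\|j}(\varepsilon^k;\bm{v}^k)\|^2_{L^2(\Omega)}\longrightarrow 0.
\]
Along a subsequence, $\varepsilon^k\to\varepsilon^\ast\in[0,\varepsilon_1]$, and I would distinguish the two cases $\varepsilon^\ast>0$ and $\varepsilon^\ast=0$, since only the latter calls upon the elliptic geometry.

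When $\varepsilon^\ast>0$, the factor $1/\varepsilon^k$ entering $e_{3\|\alpha}(\varepsilon^k;\cdot)$ and $e_{3\|3}(\varepsilon^k;\cdot)$ stays bounded. Combining this with the explicit form of the scaled strains, the uniform bounds on $\Gamma^p_{ij}(\varepsilon^k)$ from Lemma~\ref{lem:2}, and $\|v^k_\alpha\|_{H^1(\Omega)}\le 1$, one checks that $\bm{v}^k$ is bounded in $\bm{H}^1(\Omega)$ and that $e_{i\|j}(\varepsilon^\ast;\bm{v}^k)=e_{i\|j}(\varepsilon^k;\bm{v}^k)+o(1)\to 0$ in $L^2(\Omega)$. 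Korn's inequality in curvilinear coordinates for the \emph{fixed} geometry $\bm{\Theta}(\overline{\Omega^{\varepsilon^\ast}})$ — valid because $\bm{v}^k$ vanishes on the positive-measure set $\gamma\times(-1,1)$, and resting on the three-dimensional infinitesimal rigid-displacement lemma — then yields $1\le C(\varepsilon^\ast)\{\sum_{i,j}\|e_{i\|j}(\varepsilon^\ast;\bm{v}^k)\|^2_{L^2(\Omega)}\}^{1/2}\to 0$, a contradiction.

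When $\varepsilon^\ast=0$, I would first establish, as a separate lemma, a Korn inequality \emph{without boundary conditions} in curvilinear coordinates with a constant $C_0$ independent of $\varepsilon$,
\[
\|\bm{v}\|_{\bm{H}^1(\Omega)}\le C_0\Big\{\|\bm{v}\|^2_{\bm{L}^2(\Omega)}+\sum_{i,j}\|e_{i\|j}(\varepsilon;\bm{v})\|^2_{L^2(\Omega)}\Big\}^{1/2},
\qquad 0<\varepsilon\le\varepsilon_1,\ \bm{v}\in\bm{H}^1(\Omega),
\]
itself proved by a contradiction argument invoking the three-dimensional infinitesimal rigid-displacement lemma and the uniform convergences $g(\varepsilon)\to a$, $\Gamma^p_{ij}(\varepsilon)\to\Gamma^p_{ij}$ of Lemma~\ref{lem:2}. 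Applying this bound to $\bm{v}^k$ and using the normalisation shows $\bm{v}^k$ is bounded in $\bm{H}^1(\Omega)$; hence, up to a subsequence, $\bm{v}^k\rightharpoonup\bm{v}$ in $\bm{H}^1(\Omega)$ and, by Rellich's theorem, $\bm{v}^k\to\bm{v}$ strongly in $\bm{L}^2(\Omega)$. From $\partial_3 v^k_3=\varepsilon^k\,e_{3\|3}(\varepsilon^k;\bm{v}^k)\to 0$ and $\partial_3 v^k_\alpha=\varepsilon^k\big(2e_{3\|\alpha}(\varepsilon^k;\bm{v}^k)-\partial_\alpha v^k_3+2\Gamma^\sigma_{\alpha 3}(\varepsilon^k)v^k_\sigma\big)\to 0$ in $L^2(\Omega)$, one gets $\partial_3 v_i=0$, so $\bm{v}$ identifies with an element of $\bm{V}_M(\omega)=H^1_0(\omega)\times H^1_0(\omega)\times L^2(\omega)$ (the place condition on $\gamma\times(-1,1)$ passes to the weak limit, and $v_\alpha$, being independent of $x_3$, lies in $H^1_0(\omega)$). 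Passing to the limit in
\[
e_{\alpha\|\beta}(\varepsilon^k;\bm{v}^k)=\tfrac12(\partial_\beta v^k_\alpha+\partial_\alpha v^k_\beta)-\Gamma^\sigma_{\alpha\beta}(\varepsilon^k)v^k_\sigma-\Gamma^3_{\alpha\beta}(\varepsilon^k)v^k_3,
\]
using $\Gamma^\sigma_{\alpha\beta}(\varepsilon^k)\to\Gamma^\sigma_{\alpha\beta}$ and $\Gamma^3_{\alpha\beta}(\varepsilon^k)\to b_{\alpha\beta}$ uniformly (Lemma~\ref{lem:2}) together with the strong $\bm{L}^2$ convergence of $\bm{v}^k$, yields $\gamma_{\alpha\beta}(\bm{v})=0$ for all $\alpha,\beta$; Theorem~\ref{korn} — and here, precisely, ellipticity of $\bm{\theta}(\overline{\omega})$ is used — forces $\bm{v}=\bm{0}$. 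Inserting $\bm{v}=\bm{0}$ and the strong $\bm{L}^2$ convergence into the uniform Korn inequality without boundary conditions applied to $\bm{v}^k$ finally gives $1\le C_0\{\|\bm{v}^k\|^2_{\bm{L}^2(\Omega)}+\sum_{i,j}\|e_{i\|j}(\varepsilon^k;\bm{v}^k)\|^2_{L^2(\Omega)}\}^{1/2}\to 0$, the sought contradiction.

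The crux is the case $\varepsilon^\ast=0$ and, within it, the uniform-in-$\varepsilon$ Korn inequality without boundary conditions: because of the factor $\varepsilon^{-1}$ multiplying $\partial_3$ in the transverse strains, smallness of the strains does not control the full gradient $\varepsilon$-uniformly, so the a priori $\bm{H}^1(\Omega)$-bound on the contradicting sequence cannot be obtained by soft compactness and must be extracted from a rigid-displacement argument performed with care on the anisotropically scaled operator. The surface Korn inequality of Theorem~\ref{korn} then plays, in the singular limit $\varepsilon\to 0$, the role of the (otherwise unavailable) two-dimensional rigid-displacement lemma that keeps the limit problem non-degenerate.
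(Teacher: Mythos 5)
The paper itself does not prove Theorem~\ref{korn3D}: it is quoted with a reference to Theorem~4.1 of \cite{CiaLods1996b} (equivalently Theorem~4.3-1 of \cite{Ciarlet2000}), so your argument has to stand on its own. Its skeleton --- contradiction, compactness, and Theorem~\ref{korn} to annihilate the limit field --- is indeed the skeleton of the cited proof (where, incidentally, your case $\varepsilon^\ast>0$ is not needed: taking $\varepsilon_1=1/k$ and $C_1=k$ in the negation forces $\varepsilon^k\to0$; the case you treat is correct but dispensable). The fatal problem is the auxiliary lemma on which everything rests: the ``$\varepsilon$-uniform Korn inequality without boundary conditions'' with the full $\bm{H}^1(\Omega)$-norm on the left is \emph{false}. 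Take $\chi\in\mathcal{D}(\omega)$, $\theta_k(y):=\chi(y)\sin(ky_1)$, $\varepsilon_k:=k^{-2}$, and
\[
v^k_3(y,x_3):=\theta_k(y),\qquad v^k_\alpha(y,x_3):=-\varepsilon_k x_3\,\partial_\alpha\theta_k(y).
\]
Then $e_{3\|3}(\varepsilon_k;\bm{v}^k)=0$, $e_{3\|\alpha}(\varepsilon_k;\bm{v}^k)=-\Gamma^\sigma_{\alpha3}(\varepsilon_k)v^k_\sigma=\mathcal{O}(\varepsilon_k k)$ in $L^2(\Omega)$, and $e_{\alpha\|\beta}(\varepsilon_k;\bm{v}^k)=-\varepsilon_k x_3\partial_{\alpha\beta}\theta_k-\Gamma^\sigma_{\alpha\beta}(\varepsilon_k)v^k_\sigma-\Gamma^3_{\alpha\beta}(\varepsilon_k)\theta_k$, so by Lemma~\ref{lem:2} the quantity $\|\bm{v}^k\|_{\bm{L}^2(\Omega)}+\sum_{i,j}\|e_{i\|j}(\varepsilon_k;\bm{v}^k)\|_{L^2(\Omega)}$ stays bounded (note $\varepsilon_k\|\partial_{\alpha\beta}\theta_k\|_{L^2(\omega)}=\mathcal{O}(1)$), while $\|\bm{v}^k\|_{\bm{H}^1(\Omega)}\ge\|\partial_1 v^k_3\|_{L^2(\Omega)}\sim k\to\infty$; moreover $\bm{v}^k$ vanishes near $\gamma\times(-1,1)$, so adding the boundary condition of $\bm{V}(\Omega)$ does not rescue the lemma. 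Nor can you patch it by weakening its left-hand side: what your proof actually extracts from the lemma is an $\bm{H}^1(\Omega)$-bound on $v^k_3$, hence (Rellich) the strong convergence of $v^k_3$ in $L^2(\Omega)$, and precisely this compactness cannot be obtained uniformly in $\varepsilon$ from $\|\bm{v}\|_{\bm{L}^2}$ and the scaled strains --- which is why the left-hand side of Theorem~\ref{korn3D} carries only $\|v_3\|_{L^2(\Omega)}$.

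Concretely, in your $\varepsilon^\ast=0$ case the identification $\gamma_{\alpha\beta}(\bm{v})=0$ survives (weak $L^2$-convergence of $v^k_3$ and the uniform convergence of the coefficients in Lemma~\ref{lem:2} suffice there, and $\varepsilon^k\partial_\alpha v^k_3\to0$ need only be used distributionally), but the final contradiction $1\le C_0\{\|\bm{v}^k\|^2_{\bm{L}^2}+\sum_{i,j}\|e_{i\|j}(\varepsilon^k;\bm{v}^k)\|^2_{L^2}\}^{1/2}\to0$ requires $\|v^k_3\|_{L^2(\Omega)}\to0$, which you have not established. A correct route --- essentially that of \cite{CiaLods1996b} --- draws this compactness from the ellipticity itself rather than from a three-dimensional Korn inequality: since $\gamma_{\alpha\beta}(\bm{v}^k)=e_{\alpha\|\beta}(\varepsilon^k;\bm{v}^k)+(\Gamma^\sigma_{\alpha\beta}(\varepsilon^k)-\Gamma^\sigma_{\alpha\beta})v^k_\sigma+(\Gamma^3_{\alpha\beta}(\varepsilon^k)-b_{\alpha\beta})v^k_3\to0$ \emph{strongly} in $L^2(\Omega)$ by Lemma~\ref{lem:2} and the normalisation, one applies Theorem~\ref{korn} to $\bm{v}^k(\cdot,x_3)$ for a.e.\ fixed $x_3$ (legitimate because $\bm{v}^k(\cdot,x_3)\in H^1_0(\omega)\times H^1_0(\omega)\times L^2(\omega)$ and the coefficients depend on $y$ only) and integrates in $x_3$, obtaining directly $\|v^k_3\|_{L^2(\Omega)}\to0$, $\|v^k_\alpha\|_{L^2(\Omega)}\to0$ and $\|\partial_\beta v^k_\alpha\|_{L^2(\Omega)}\to0$. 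Even then one is not done: the term $\|\partial_3 v^k_\alpha\|_{L^2(\Omega)}$, which your scheme would have absorbed into the false lemma, still must be shown to vanish, and it is not delivered by $\partial_3 v^k_\alpha=\varepsilon^k\bigl(2e_{3\|\alpha}(\varepsilon^k;\bm{v}^k)+2\Gamma^\sigma_{\alpha3}(\varepsilon^k)v^k_\sigma-\partial_\alpha v^k_3\bigr)$ alone, since $\varepsilon^k\partial_\alpha v^k_3$ is not controlled in $L^2$; this is the genuinely delicate point of the cited proof and needs its own argument. As it stands, the proposal therefore contains a genuine gap and does not prove the theorem.
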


Define the space 
$$
\bm{V}_M (\omega) := H^1_0 (\omega) \times H^1_0 (\omega) \times L^2(\omega),
$$
and define the set:
\begin{equation*}
	\bm{U}_M(\omega) := \{\bm{\eta} = (\eta_i) \in \bm{V}_M(\omega); \eta_3+s \ge 0 \textup{ a.e. in } \omega\}.
\end{equation*}

Define $p^i := \int^1_{-1} f^i \dd x_3$ and define the fourth order two-dimensional elasticity tensor $\{a^{\alpha\beta\sigma\tau}\}$ by:
\begin{equation*}
	a^{\alpha\beta\sigma\tau} := \dfrac{4\lambda\mu}{\lambda + 2\mu} a^{\alpha\beta} a^{\sigma\tau} + 2\mu \left(a^{\alpha\sigma} a^{\beta\tau} + a^{\alpha\tau} a^{\beta\sigma}\right).
\end{equation*}

The two-dimensional limit problem describing the deformation of a linearly elastic elliptic membrane shell subjected to remaining confined in a prescribed half-space takes the following form. This two-dimensional limit problem was recovered upon completion of a rigorous asymptotic analysis as $\varepsilon\to0$ departing from Problem~\ref{problem1}.

\begin{customprob}{$\mathcal{P}_M(\omega)$}\label{problem2}
	Find $\bm{\zeta} \in \bm{U}_M(\omega)$ that satisfies the variational inequalities
	\begin{equation*}
		\int_\omega a^{\alpha \beta \sigma \tau} \gamma_{\sigma\tau}(\bm{\zeta}) \gamma_{\alpha\beta} (\bm{\eta} - \bm{\zeta}) \sqrt{a} \dd y \geq \int_\omega p^i (\eta_i - \zeta_i) \sqrt{a} \dd y,
	\end{equation*}
	for all $\bm{\eta} = (\eta_i) \in \bm{U}_M(\omega)$.
	\bqed
\end{customprob}

Notice that Problem~\ref{problem2} admits a unique solution thanks to the uniform positive-definiteness of the fourth order two-dimensional elasticity tensor $\{a^{\alpha\beta\sigma\tau}\}$ (cf., e.g., Theorem~3.3-2 in~\cite{Ciarlet2000}), Korn's inequality for elliptic surfaces (Theorem~\ref{korn}). Note that, at this stage, we are not allowed to discuss the trace of $\zeta_3$ along $\gamma$, since $\zeta_3$ is \emph{a priori} only of class $L^2(\omega)$. The loss of the homogeneous boundary condition for the transverse component of the limit model, which is \emph{a priori} only square integrable, is \emph{compensated} by the emergence of a \emph{boundary layer} for the transverse component. By proving that the solution possesses higher regularity, we will demonstrate that it is possible to \emph{restore} the boundary condition for the transverse component of the solution as well, and that the trace of the transverse component of the solution along the boundary is almost everywhere (in the sense of the measure of the contour) equal to zero. We will establish one such augmentation of regularity as well as the well-posedness for the concept of trace for the transverse component of the solution in Section~\ref{sec4}.

\section{Asymptotic analysis as $\varepsilon\to 0$ departing from Problem~\ref{problem1}}
\label{sec:AA}

The purpose of this section is to show that the solution $\bm{u}(\varepsilon)$ of Problem~\ref{problem1} converges, in some suitable sense, to the solution of Problem~\ref{problem2} as $\varepsilon\to 0$. This result was established in~\cite{Rodri2018} in the case where the normal compliance contact is formulated over the boundary. In the formulation there proposed, however, the normal compliance contact term is defined over the set $\Gamma_C$ which represents the portion of the lower or upper face which is \emph{likely to engage contact with the obstacle} and which, in general, is one of the \emph{unknowns} of the problem.

The interior normal compliance contact condition here proposed combines the formulation proposed in~\cite{Rodri2018} with the formulation of the confinement condition proposed by Ciarlet and his collaborators~\cite{CiaPie2018b,CiaPie2018bCR,CiaMarPie2018b,CiaMarPie2018,Pie-2022-interior,Pie-2022-jde,Pie2020-1,Pie2023}. By so doing, we will be able to establish the validity of the forthcoming asymptotic analysis for a larger number of cases than those for which the ``density property'' established in~\cite{CiaMarPie2018} applies, and we will be able to give a sound formulation of the model \emph{without} making the unknown set $\Gamma_C$ considered in~\cite{Rodri2018} appear in our analysis.

\begin{lemma}
	\label{lem5}
	Let $\varepsilon_1$ be as in Theorem~\ref{korn3D}.
	Suppose that there exists $L_0>0$ independent of $\varepsilon$ and $\varepsilon_1>0$ such that $s(\varepsilon)\ge L_0$ in $\overline{\Omega}$ for all $0<\varepsilon\le \varepsilon_1$. Then, it results:
	\begin{equation*}
		s \ge \dfrac{L_0}{2},\quad\textup{ in }\overline{\omega}.
	\end{equation*}
\end{lemma}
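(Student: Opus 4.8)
The plan is to simply unwind the definition of the scaled gap function and evaluate it at a convenient point. Recall from the construction preceding Problem~\ref{problem1} that the scaled gap function satisfies $s(\varepsilon)(y,x_3) = s(y) + \varepsilon x_3$ for every $(y,x_3)\in\overline{\omega}\times[-1,1]=\overline{\Omega}$, where $s$ denotes the fixed, $\varepsilon$-independent gap function of the middle surface. This identity is the only ingredient; no regularity, trace, or approximation argument is involved.

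First I would fix an arbitrary $y\in\overline{\omega}$ and exploit the hypothesis on a single slice. Evaluating $s(\varepsilon)\ge L_0$ on $\overline{\Omega}$ at the point $(y,0)$ for any one value $\varepsilon\in(0,\varepsilon_1]$ already gives $s(y)=s(\varepsilon)(y,0)\ge L_0$, which is (a fortiori) the claimed bound. If one prefers an argument that does not privilege the slice $x_3=0$ — which is presumably why the statement carries the safety factor $1/2$ — I would keep $x_3$ generic instead: for every $\varepsilon\in(0,\varepsilon_1]$ and every $x_3\in[-1,1]$ the hypothesis yields $s(y)=s(\varepsilon)(y,x_3)-\varepsilon x_3\ge L_0-\varepsilon|x_3|\ge L_0-\varepsilon$. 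Choosing $\varepsilon:=\min\{\varepsilon_1,L_0/2\}>0$ then produces $s(y)\ge L_0-L_0/2=L_0/2$, and since $y\in\overline{\omega}$ was arbitrary, $s\ge L_0/2$ in $\overline{\omega}$.

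There is essentially no obstacle: the lemma is a bookkeeping step recording that a uniform-in-$\varepsilon$ lower bound on the three-dimensional (scaled) gap descends to a lower bound on the two-dimensional gap $s$, which is exactly what is needed to run the asymptotic analysis departing from Problem~\ref{problem1}. The only thing to be careful about is that the identity $s(\varepsilon)(y,x_3)=s(y)+\varepsilon x_3$ is used in its exact form, and that the resulting bound on $s$ is genuinely $\varepsilon$-independent precisely because the leading term $s(y)$ does not depend on $\varepsilon$.
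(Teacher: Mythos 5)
Your proposal is correct and follows essentially the same route as the paper: both unwind the identity $s(\varepsilon)(y,x_3)=s(y)+\varepsilon x_3$ to get $s(y)\ge L_0-\varepsilon$ and then make $\varepsilon$ small relative to $L_0$, the only cosmetic difference being that you fix $\varepsilon:=\min\{\varepsilon_1,L_0/2\}$ whereas the paper phrases this as ``up to shrinking $\varepsilon_1$''. Your preliminary remark that the slice $x_3=0$ already yields the stronger bound $s\ge L_0$ is also correct, so the factor $1/2$ in the statement is merely a safety margin.
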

\begin{proof}
	A direct computation gives $s(y) \ge L_0-\varepsilon x_3 \ge L_0-\varepsilon_1$, for a.a. $x=(y,x_3) \in\Omega$. The conclusion follows up to shrinking $\varepsilon_1$.
\end{proof}

In the same spirit as~\cite{CiaMarPie2018}, we establish a density result for carrying out the forthcoming asymptotic analysis which, differently from the ``density property'' established in~\cite{CiaMarPie2018}, only requires that the undeformed reference configuration of the shell does not engage contact with the obstacle.
This allows us to describe a wider range of situations including the one where the middle surface of the elliptic membrane under consideration is a half-sphere. The latter case was an example of middle surface for which the ``density property'' established in~\cite{CiaMarPie2018} does not apply.

\begin{lemma}
	\label{lem6}
	Assume that $\min_{y\in\overline{\omega}}s>0$.
	Then, the set $\bm{U}_M^{(2)}(\omega):=\{\bm{\eta}=(\eta_i) \in H^1_0(\omega) \times H^1_0(\omega)\times \mathcal{D}(\omega);\eta_3+s\ge 0 \textup{ in }\overline{\omega}\}$ is dense in $\bm{U}_M(\omega)$ with respect to the norm $\|\cdot\|_{H^1(\omega)\times H^1(\omega)\times L^2(\omega)}$.
\end{lemma}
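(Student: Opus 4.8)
The plan is first to reduce the statement to a scalar approximation problem for the transverse component. Since both $\bm{U}_M(\omega)$ and $\bm{U}_M^{(2)}(\omega)$ impose $(\eta_1,\eta_2)\in H^1_0(\omega)\times H^1_0(\omega)$, and the norm $\|\cdot\|_{H^1(\omega)\times H^1(\omega)\times L^2(\omega)}$ does not couple the three components, it suffices, given $\bm{\eta}=(\eta_1,\eta_2,\eta_3)\in\bm{U}_M(\omega)$, to approximate only $\eta_3$ in $L^2(\omega)$ by functions $\zeta\in\mathcal{D}(\omega)$ with $\zeta+s\ge 0$ in $\overline{\omega}$; the triples $(\eta_1,\eta_2,\zeta)$ then lie in $\bm{U}_M^{(2)}(\omega)$ and converge to $\bm{\eta}$. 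Thus everything reduces to showing that $\{\zeta\in\mathcal{D}(\omega):\zeta+s\ge 0 \text{ in }\overline{\omega}\}$ is dense, for the $L^2(\omega)$ norm, in $\{v\in L^2(\omega):v+s\ge 0 \text{ a.e. in }\omega\}$. The approximants will be produced by successively rescaling, cutting off, and mollifying, and the hypothesis $s_0:=\min_{y\in\overline{\omega}}s>0$ (recall $s\in\mathcal{C}^2(\overline{\omega})$) is exactly what keeps all three operations compatible with the constraint.

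Fix $v:=\eta_3$, so $v\ge -s$ a.e. First, for $\theta\in(0,1)$ the rescaled function $\theta v$ still satisfies $\theta v\ge -\theta s\ge -s$, and moreover $\theta v+s\ge(1-\theta)s\ge(1-\theta)s_0>0$, so rescaling has created a uniform strictly positive margin in the constraint, while $\theta v\to v$ in $L^2(\omega)$ as $\theta\to 1$. Next, choose $\chi_m\in\mathcal{D}(\omega)$ with $0\le\chi_m\le 1$ and $\chi_m\equiv 1$ on a sequence of compact subsets exhausting $\omega$; then $\chi_m\theta v\to\theta v$ in $L^2(\omega)$ by dominated convergence, and the margin survives, since where $\theta v\ge 0$ one has $\chi_m\theta v+s\ge s\ge s_0$ and where $\theta v<0$ one has $\chi_m\theta v+s\ge\theta v+s\ge(1-\theta)s_0$, whence $\chi_m\theta v+s\ge(1-\theta)s_0=:c>0$ a.e. in $\omega$. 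So we are reduced to $v_1:=\chi_m\theta v$, which has compact support $K\subset\subset\omega$ and satisfies $v_1+s\ge c>0$ a.e. in $\omega$. Finally, with $\rho_\delta$ a standard mollifier and $\delta$ small, $v_1*\rho_\delta\in\mathcal{D}(\omega)$: if $\dist(x,K)>\delta$ then $(v_1*\rho_\delta)(x)=0$, so $(v_1*\rho_\delta)(x)+s(x)=s(x)\ge s_0>0$; if $\dist(x,K)\le\delta$ and $\delta<\frac12\dist(K,\partial\omega)$, then $x$ and $x-y$ lie in $\omega$ for all $|y|\le\delta$, and from $(v_1*\rho_\delta)(x)+s(x)=\int[v_1(x-y)+s(x-y)]\rho_\delta(y)\dd y+\int[s(x)-s(x-y)]\rho_\delta(y)\dd y$ together with the uniform continuity of $s$ on $\overline{\omega}$ one obtains $(v_1*\rho_\delta)(x)+s(x)\ge c-\kappa_s(\delta)$, where $\kappa_s(\delta)\to 0$ as $\delta\to 0$. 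Choosing $\delta$ so small that $\kappa_s(\delta)<c$ and that $\|v_1*\rho_\delta-v_1\|_{L^2(\omega)}$ is as small as needed, and assembling the three steps by the triangle inequality, yields the required $\zeta:=v_1*\rho_\delta$.

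The routine ingredients are the $L^2$-convergences at each step (dominated convergence for the cut-off, the standard approximation property of mollifiers). The delicate point, and the only place where the hypothesis is genuinely used, is the propagation of the inequality $\cdot+s\ge 0$: the rescaling by $\theta<1$ converts the possibly tight constraint $v+s\ge 0$ into one with a uniform positive margin, and that margin is precisely the budget that the cut-off (multiplication by a function valued in $[0,1]$) and the mollification (which perturbs the constraint by at most the modulus of continuity of $s$) are allowed to consume. This mechanism breaks down when the obstacle is permitted to touch the middle surface, i.e.\ when $\min_{\overline{\omega}}s=0$, which is why the present argument covers configurations — such as the half-sphere — not covered by the ``density property'' of \cite{CiaMarPie2018}.
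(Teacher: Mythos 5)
Your proposal is correct and follows essentially the same route as the paper's proof: fix the tangential components, create a strictly positive margin in the constraint by scaling the transverse component towards $0$ (which is admissible precisely because $\min_{\overline{\omega}}s>0$), cut off near $\gamma$ without losing that margin, and then mollify, absorbing the mollification error in the margin via the uniform continuity of $s$. The only difference is cosmetic (you separate the rescaling and the cut-off, whereas the paper performs them in a single step with a distance-to-boundary cut-off), so no further comparison is needed.
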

\begin{proof}
	We follow the same strategy as in~\cite{CiaMarPie2018}, showing that the sought density is achieved by proving it gradually for smaller and smaller subsets of $\bm{U}_M(\omega)$. For the sake of clarity, we break the proof into two steps, numbered $(i)$ and $(ii)$.
	
	$(i)$ \emph{The set $\bm{U}_M^{(1)}(\omega):=\{\bm{\eta}=(\eta_i)\in H^1_0(\omega) \times H^1_0(\omega) \times L^2(\omega); \textup{ there exists }\delta^\sharp=\delta^\sharp(\bm{\eta})>0 \textup{ for which } \eta_3(y)=0 \textup{ for a.a. } y\in \omega; \textup{dist}(y,\gamma)<\delta^\sharp \textup{ and }\eta_3+s\ge (\min_{y\in\overline{\omega}}s)\delta^\sharp \textup{ a.e. in }\omega\}$ is dense in $\bm{U}_M(\omega)$.} Fix $\bm{\eta} \in \bm{U}_M(\omega)$. For each integer $k\ge 1$, define the function:
	\begin{equation*}
		f^{(k)}(y):=
		\begin{cases}
			0&, \textup{ if }0\le \textup{dist}(y,\gamma)<\dfrac{1}{k},\\
			\\
			k\textup{dist}(y,\gamma)-1&, \textup{ if }\dfrac{1}{k}\le\textup{dist}(y,\gamma)<\dfrac{2}{k},\\
			\\
			1&, \textup{ if }\textup{dist}(y,\gamma)\ge\dfrac{2}{k}.
		\end{cases}
	\end{equation*}
	
	Observe that $0\le f^{(k)}\le 1$ in $\overline{\omega}$. Define the functions $\eta_\alpha^{(k)}:=\eta_\alpha \in H^1_0(\omega)$ and define:
	$$
	\eta_3^{(k)}:=\left(1-\dfrac{1}{k}\right)f^{(k)}\eta_3 \in L^2(\omega).
	$$
	
	It is straightforward to observe that $\eta_3^{(k)} \to \eta_3$ in $L^2(\omega)$ as $k\to\infty$, and that $\eta_3^{(k)}(y)=0$ for a.a. $y\in\omega$ such that $\textup{dist}(y,\gamma)<1/k$. Besides, for a.a. $y\in\omega$, it results:
	\begin{equation*}
		\begin{aligned}
			\eta_3^{(k)}(y)+s(y)&=\dfrac{s(y)}{k}+\left(1-\dfrac{1}{k}\right)s(y)-\left(1-\dfrac{1}{k}\right)f^{(k)}(y) s(y)+\left(1-\dfrac{1}{k}\right)f^{(k)}(y)(s(y)+\eta_3(y))\\
			&\ge \dfrac{s(y)}{k}+\left(1-\dfrac{1}{k}\right)(1-f^{(k)}(y))s(y)\ge \dfrac{\min_{y\in\overline{\omega}}s(y)}{k}.
		\end{aligned}
	\end{equation*}
	
	Letting $\delta^\sharp:=1/k$ allows to infer the desired conclusion.
	
	$(ii)$ \emph{The set $\bm{U}_M^{(2)}(\omega):=\{\bm{\eta}=(\eta_i) \in H^1_0(\omega) \times H^1_0(\omega)\times \mathcal{D}(\omega);\eta_3+s > 0 \textup{ in }\overline{\omega}\}$ is dense in $\bm{U}_M^{(1)}(\omega)$ with respect to the norm $\|\cdot\|_{H^1(\omega)\times H^1(\omega)\times L^2(\omega)}$.} Fix $\bm{\eta} \in\bm{U}_M^{(1)}(\omega)$. For each integer $k\ge 2/\delta^\sharp(\bm{\eta})$, denote by $\rho_{1/k}$ the standard mollifier with compact support in the ball $B(0;1/k)$. For each $r>0$, we denote by $\omega_r$ the set:
	\begin{equation*}
		\omega_r:=\{y\in\omega;\textup{dist}(y,\gamma)>r\}.
	\end{equation*}
	
	Define $\eta_\alpha^{(k)}:=\eta_\alpha$ and $\eta_3^{(k)}:=\rho_{1/k} \star \eta_3$, where $\star$ denotes the standard convolution product (cf., e.g., \cite{Brez11}). Therefore, it results:
	\begin{equation*}
		\begin{aligned}
			\eta_3^{(k)}=0&,\quad\textup{ in }\omega\setminus\overline{\omega_{\delta^\sharp(\bm{\eta})/2}},\\
			\eta_3^{(k)}\in\mathcal{D}(\omega)&,\quad\textup{ for all } k\ge 1,\\
			\eta_3^{(k)}\to\eta_3&,\quad\textup{ in }L^2(\omega) \textup{ as }k\to\infty.
		\end{aligned}
	\end{equation*}
	
	Additionally, for all $y\in\overline{\omega}$, an application of the fact that $\int_{B(0;1/k)}\rho_{1/k}(z)\dd z =1$ gives:
	\begin{equation*}
		\begin{aligned}
			&s(y)+\eta_3^{(k)}(y)=s(y)+\int_{B(0;1/k)} \rho_{1/k}(z) \eta_3(y-z) \dd z\\
			&= \int_{B(0;1/k)}(s(y)-s(y-z)) \rho_{1/k}(z) \dd z+\int_{B(0;1/k)} (s(y-z)+\eta_3(y-z)) \rho_{1/k}(z) \dd z\\
			&\ge \delta^\sharp(\bm{\eta}) \left(\min_{y\in\overline{\omega}}s(y)\right)-\max_{\substack{y\in\omega_{\delta^\sharp(\bm{\eta})/2}\\|z|<1/k}}|s(y)-s(y-z)|.
		\end{aligned}
	\end{equation*}
	
	The continuity of the gap function $s$ gives that the latter quantity is strictly positive for $k$ sufficiently large, and the proof is complete.
\end{proof}

The convergence result we are going to establish in this section constitutes the first new result in this paper. In order to establish this result, we first need to demonstrate a preliminary density lemma, which plays the same role as the ``density property'' invoked in~\cite{CiaMarPie2018}. In this regard, we observe that in~\cite{Rodri2018}, the test functions were specialised by taking $\bm{v}=-(0,0,-s)$, thus requiring the \emph{additional} assumption that $s$ vanishes along $\Gamma_0$. We present here an improved version of this result, where the latter assumption on $s$ is replaced by the more natural and more general assumption $s(\varepsilon)\ge 0$ in $\overline{\Omega}$.

\begin{theorem}
	\label{th1}
	Let $\omega$ be a domain in $\mathbb{R}^2$, let $\bm{\theta} \in \mathcal{C}^3(\overline{\omega}; \mathbb{E}^3)$ be the middle surface of an elliptic membrane.
	Let there be given a family of elliptic membranes with the same middle surface $\bm{\theta}(\overline{\omega})$ and thickness $2 \varepsilon > 0$, and let $\bm{u}(\varepsilon) \in \bm{U}(\varepsilon;\Omega)$ denote for each $\varepsilon > 0$ the unique solution of Problem~\ref{problem1}. Assume that the applied body force densities $f^{i,\varepsilon}$ are in the form~\eqref{ass-data} and that $s(\varepsilon)> 0$ in $\overline{\Omega}$.
	
	Then, there exists $\bm{u}=(u_i)$ independent of the variable $x_3$ and satisfying
	\begin{align*}
		u_\alpha =0&,\quad \textup{ on }  \Gamma_0=\gamma\times(-1,1),\\
		u_\alpha(\varepsilon) \to u_\alpha&,\quad \textup{ in } H^1(\Omega) \textup{ as } \varepsilon \to 0,\\
		u_3(\varepsilon) \to u_3&,\quad \textup{ in } L^2(\Omega) \textup{ as } \varepsilon \to 0.
	\end{align*}
	
	Define the average
	$$
	\overline{\bm{u}} = (\overline{u}_i) := \frac12 \int^1_{-1} \bm{u} \dd x_3.
	$$
	Then
	$$
	\overline{\bm{u}} = \bm{\zeta},
	$$
	where $\bm{\zeta}$ is the unique solution of Problem~\ref{problem2}.
\end{theorem}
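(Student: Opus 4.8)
The plan is to follow the now-classical scheme for justifying two-dimensional membrane shell models (as in Ciarlet--Lods, Theorem~3.4-1 of \cite{Ciarlet2000}, and its obstacle-problem variant in \cite{Rodri2018}), adapted to the interior normal compliance condition via the new density Lemma~\ref{lem6}. First I would derive uniform (in $\varepsilon$) a priori bounds: testing the variational inequality of Problem~\ref{problem1} with a fixed admissible $\bm{v}$ (for instance $\bm{v}=(0,0,-s(\varepsilon))$-type shifts, or more robustly an element supplied by the density lemma so that no artificial hypothesis on $s$ is needed), and using the uniform ellipticity of $A^{ijk\ell}(\varepsilon)$ from Lemma~\ref{lem:2} together with the $\varepsilon$-uniform Korn inequality of Theorem~\ref{korn3D}, gives bounds on $\|u_\alpha(\varepsilon)\|_{H^1(\Omega)}$, $\|u_3(\varepsilon)\|_{L^2(\Omega)}$, and on the scaled strains $e_{i\|j}(\varepsilon;\bm{u}(\varepsilon))$ in $L^2(\Omega)$. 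These yield weak limits (up to a subsequence) $u_\alpha(\varepsilon)\rightharpoonup u_\alpha$ in $H^1(\Omega)$, $u_3(\varepsilon)\rightharpoonup u_3$ in $L^2(\Omega)$, and weak limits of the strains; the scaling $e_{3\|3}(\varepsilon;\bm{v})=\varepsilon^{-1}\partial_3 v_3$ etc.\ forces $\partial_3 u_3=0$ and, with a short argument, $\partial_3 u_\alpha=0$, so the limit $\bm{u}$ is independent of $x_3$. The boundary condition $u_\alpha=0$ on $\Gamma_0$ passes to the limit by weak closedness of the trace-zero subspace of $H^1$.

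Next I would identify the limit problem. Passing to the limit in the variational inequality requires: (a) taking test functions of the special form coming from $\bm{U}_M(\omega)$ (independent of $x_3$), lifted to $\Omega$, which by Lemma~\ref{lem6} and Lemma~\ref{lem5} form a dense and admissible class; (b) using Lemma~\ref{lem:2} to replace $A^{ijk\ell}(\varepsilon)$, $\Gamma^p_{ij}(\varepsilon)$, $\sqrt{g(\varepsilon)}$ by their limits $A^{ijk\ell}(0)$, $\Gamma^p_{ij}$, $b_{\alpha\beta}$, $\sqrt{a}$, controlling the $\mathcal{O}(\varepsilon)$ errors; (c) eliminating the transverse strain components $e_{i\|3}(\varepsilon;\bm{u}(\varepsilon))$. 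Step (c) is the heart of the matter: as in the linear theory one shows the weak limits $e_{\alpha\|3}$ and $e_{3\|3}$ of these scaled strains can be expressed algebraically in terms of $\gamma_{\alpha\beta}(\overline{\bm{u}})$ using the block structure $A^{\alpha\beta\sigma3}(0)=A^{\alpha333}(0)=0$, and that the bilinear form then reduces to $\int_\omega a^{\alpha\beta\sigma\tau}\gamma_{\sigma\tau}\,\gamma_{\alpha\beta}\sqrt{a}\,dy$ with the condensed tensor $a^{\alpha\beta\sigma\tau}=\frac{4\lambda\mu}{\lambda+2\mu}a^{\alpha\beta}a^{\sigma\tau}+2\mu(a^{\alpha\sigma}a^{\beta\tau}+a^{\alpha\tau}a^{\beta\sigma})$ obtained precisely by this static condensation. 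One also checks $e_{\alpha\|\beta}(\varepsilon;\bm{u}(\varepsilon))\rightharpoonup\gamma_{\alpha\beta}(\overline{\bm{u}})$ in $L^2$, using that the limit is $x_3$-independent so that averaging over $x_3$ is harmless. Handling the inequality (rather than equality) means one keeps the $\liminf$ on the quadratic term $\int A^{ijk\ell}(\varepsilon)e_{k\|\ell}(\bm{u}(\varepsilon))e_{i\|j}(\bm{u}(\varepsilon))\sqrt{g(\varepsilon)}$ via weak lower semicontinuity of the (uniformly coercive) quadratic form, and passes to the limit in the mixed and linear terms directly; this yields that $\overline{\bm{u}}$ satisfies the variational inequality of Problem~\ref{problem2} for all $\bm{\eta}$ in the dense set, hence for all $\bm{\eta}\in\bm{U}_M(\omega)$ by continuity, and $\overline{\bm{u}}\in\bm{U}_M(\omega)$ because $u_3+s\ge0$ is stable under weak $L^2$ limits and $x_3$-averaging. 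By uniqueness of the solution of Problem~\ref{problem2} (Theorem~\ref{korn} plus coercivity of $a^{\alpha\beta\sigma\tau}$), $\overline{\bm{u}}=\bm{\zeta}$, and since the limit is independent of the subsequence the whole family converges.

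Finally I would upgrade weak to strong convergence. The standard trick: using the variational inequality for $\bm{u}(\varepsilon)$ with test function $\bm{v}=\bm{\zeta}$ (lifted, and regularised if needed so that it lies in $\bm{U}(\varepsilon;\Omega)$ — here Lemma~\ref{lem6} again is what makes $\bm{\zeta}$ approximable by genuinely admissible competitors, and the $\varepsilon x_3$ shift in $s(\varepsilon)$ is absorbed because $s\ge L_0/2>0$), one bounds $\limsup_\varepsilon \int A^{ijk\ell}(\varepsilon)e_{k\|\ell}(\bm{u}(\varepsilon))e_{i\|j}(\bm{u}(\varepsilon))\sqrt{g(\varepsilon)}$ from above by the limit quadratic energy of $\bm{\zeta}$; combined with weak lower semicontinuity this forces convergence of the energies, and coercivity (the constant $C_e$ in Lemma~\ref{lem:2}) then upgrades the scaled strains to strong convergence in $L^2(\Omega)$. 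A further application of the $\varepsilon$-uniform Korn inequality (Theorem~\ref{korn3D}) to the difference $\bm{u}(\varepsilon)-\bm{u}$ converts strong strain convergence into $u_\alpha(\varepsilon)\to u_\alpha$ in $H^1(\Omega)$ and $u_3(\varepsilon)\to u_3$ in $L^2(\Omega)$. The main obstacle I anticipate is step (c) combined with the inequality structure: making the static condensation rigorous at the level of weak limits of the scaled transverse strains — i.e.\ identifying those limits and showing the cross terms vanish — while only having an inequality (so one cannot simply test with arbitrary $\bm{v}$ whose transverse-strain contribution one would like to probe); the resolution is to choose test functions of the form $\bm{\zeta}\pm$ (bubble in $x_3$) that are still admissible because they do not alter $v_3$ near the obstacle constraint, exactly as in the unconstrained case, and to exploit the sign conditions carefully rather than an equality.
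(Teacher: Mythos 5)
Your overall architecture matches the paper's proof almost step for step: uniform bounds from coercivity of $A^{ijk\ell}(\varepsilon)$ plus the $\varepsilon$-uniform Korn inequality (Theorem~\ref{korn3D}), extraction of weak limits, $x_3$-independence, identification of the limit strains by static condensation, passage to the limit in the inequality on a dense class of competitors supplied by Lemma~\ref{lem6}, uniqueness of the solution of Problem~\ref{problem2}, and finally the energy trick upgrading weak to strong convergence (parts $(i)$--$(vii)$ of the paper). Two small remarks: for the a priori bound the paper simply tests with $\bm{v}=\bm{0}$, which is admissible precisely because $s(\varepsilon)\ge 0$; your alternative $\bm{v}=(0,0,-s(\varepsilon))$ is the Rodrigues-type choice the paper deliberately avoids, since it forces the extra hypothesis that $s$ vanish on $\Gamma_0$ for $\bm{v}$ to lie in $\bm{V}(\Omega)$. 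And for the recovery sequences the paper does not lift elements of the dense class directly but uses the shrinking $(1-\sqrt{\varepsilon})\bm{\eta}$, whose admissibility again exploits $s(\varepsilon)>0$.

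The genuine gap is in your step (c), which you yourself flag as the heart of the matter. To identify the weak limit $e_{3\|3}$ and obtain the condensation relation $e_{3\|3}=-\frac{\lambda}{\lambda+2\mu}a^{\alpha\beta}e_{\alpha\|\beta}$ (whence the tensor $a^{\alpha\beta\sigma\tau}$), you must probe the variational inequality with competitors whose \emph{transverse} component has a prescribed $x_3$-derivative of \emph{arbitrary sign}, because $e_{3\|3}(\varepsilon;\bm{v})=\varepsilon^{-1}\partial_3 v_3$. Your proposed fix -- bubbles in $x_3$ ``that do not alter $v_3$ near the obstacle constraint'' -- cannot deliver this: if $v_3$ is untouched you only reach $e_{\alpha\|3}$, and if $v_3$ is perturbed you must re-establish admissibility, which is exactly the nontrivial point under the constraint $v_3+s(\varepsilon)\ge 0$. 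The paper resolves it in part $(iii)$ of its proof: for any $\bm{\varphi}\in\bm{\mathcal{D}}(\Omega)$ it builds $v_i(\bm{\varphi})=\bigl(\max_{\overline{\Omega}}|\varphi_i|\bigr)\,(\rho_{\delta/3}\star\chi_{\omega_{\delta/2}})+\int_0^{x_3}\varphi_i(\cdot,z)\dd z$, so that $\partial_3 v_i(\bm{\varphi})=\varphi_i$, the $\bm{H}^1$-norm is bounded independently of $\varepsilon$, and $v_3(\bm{\varphi})\ge 0$, hence $v_3(\bm{\varphi})+s(\varepsilon)\ge s(\varepsilon)>0$: admissibility holds for $\pm\bm{\varphi}$ alike, and this sign-agnostic family, inserted in the inequality multiplied by $\varepsilon$, turns the one-sided information into the equalities $e_{\alpha\|3}=0$ and the condensation relation (part $(iv)$). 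This is where the hypothesis $s(\varepsilon)>0$ in $\overline{\Omega}$ earns its keep, and it is the ingredient missing from your sketch; without it the limit bilinear form with the coefficient $\frac{4\lambda\mu}{\lambda+2\mu}$ is not justified.
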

\begin{proof}
	In what follows, strong and weak convergences are respectively denoted by $\to$ and $\rightharpoonup$. The terms $e_{i\|j}(\varepsilon;\bm{u}(\varepsilon))$ appear, in the context of this proof, in the abbreviated form $e_{i\|j}(\varepsilon)$.
	The outline of the proof closely follows the one in~\cite{CiaMarPie2018}, which is itself an adaptation of the techniques presented in~\cite{CiaLods1996b}. For the sake of clarity, the proof is broken into seven parts, numbered~$(i)$--$(vii)$.
	
	$(i)$ \emph{There exists a subsequence, still denoted $\{\bm{u}(\varepsilon)\}_{\varepsilon>0}$, and there exist functions $u_\alpha \in H^1(\Omega)$ satisfying}
	$$
	u_\alpha = 0,\quad \textup{ on } \gamma \times (-1,1),
	$$
	\emph{and functions $u_3 \in L^2(\Omega)$ and $e_{i\|j} \in L^2(\Omega)$ such that}
	\begin{align*}
		u_\alpha(\varepsilon)\rightharpoonup u_\alpha&,\quad \textup{ in } H^1(\Omega) \textup{ as }\varepsilon\to 0,\\
		u(\varepsilon) \to  u_\alpha&,\quad \textup{ in } L^2(\Omega) \textup{ as }\varepsilon\to 0,\\
		u_3 (\varepsilon) \rightharpoonup u_3&,\quad \textup{ in } L^2(\Omega) \textup{ as }\varepsilon\to 0,\\
		e_{i\|j}(\varepsilon) \rightharpoonup e_{i\|j}&,\quad \textup{ in } L^2(\Omega) \textup{ as }\varepsilon\to 0.
	\end{align*}
	
	\emph{Besides, the average $\overline{\bm{u}}$ satisfies $\overline{\bm{u}} \in \bm{U}_M(\omega)$.}
	
	Since $\bm{0} \in \bm{U}(\varepsilon;\Omega)$ by assumption, we can specialise $\bm{v}=\bm{0}$ in the variational inequalities of Problem~\ref{problem1}. Combining the uniform positive-definiteness of the tensor $\{A^{ijk\ell}(\varepsilon)\}$, Korn's inequality (Theorem~\ref{korn3D}), and the asymptotic behaviour of the function $g(\varepsilon)$ (Lemma~\ref{lem:2}), we obtain that the following estimates hold for each $\varepsilon>0$:
	\begin{align*}
		&\dfrac{1}{C_1^2}\sum_i \|u_i(\varepsilon)\|^2_{L^2(\Omega)} \le \dfrac{1}{C_1^2}\left(\sum_\alpha\|u_\alpha(\varepsilon)\|^2_{H^1(\Omega)} + |u_3(\varepsilon)\|^2_{L^2(\Omega)}\right)
		\le \sum_{i,j} \|e_{i\|j}(\varepsilon)\|^2_{L^2(\Omega)}\\
		&\le\frac{C_e}{\sqrt{g_0}} \int_\Omega A^{ijk\ell}(\varepsilon) e_{k\|\ell}(\varepsilon) e_{i\|j}(\varepsilon) \sqrt{g(\varepsilon)} \dd x
		\le \frac{C_e}{\sqrt{g_0}} \int_\Omega f^i u_i(\varepsilon) \sqrt{g(\varepsilon)} \dd x\\
		&\le C_e\sqrt{\frac{g_1}{g_0}} \left\{\sum_i\|f^i\|^2_{L^2(\Omega)}\right\}^{1/2} \left\{\sum_i \left\|u_i(\varepsilon)\right\|^2_{L^2(\Omega)}\right\}^{1/2}.
	\end{align*}
	
	Hence, up to passing to a suitable subsequence, there exists a vector field $\bm{u} \in \bm{L}^2(\Omega)$ and functions $e_{i\|j} \in L^2(\Omega)$ such that:
	\begin{equation}
		\label{conv-proc}
		\begin{aligned}
			u_\alpha(\varepsilon) \rightharpoonup u_\alpha&,\quad \textup{ in } H^1(\Omega) \textup{ as }\varepsilon\to 0,\\
			u_3(\varepsilon) \rightharpoonup u_3&,\quad \textup{ in } L^2(\Omega) \textup{ as }\varepsilon\to 0,\\
			e_{i\|j}(\varepsilon) \rightharpoonup e_{i\|j}&,\quad \textup{ in } L^2(\Omega) \textup{ as }\varepsilon\to 0.
		\end{aligned}
	\end{equation}
	
	The fact that $u_\alpha(\varepsilon) \to u_\alpha \textup{ in } L^2(\Omega)$ is a consequence of the Rellich-Kondra\v{s}ov Theorem (viz., e.g., Theorem~6.6-3 of~\cite{PGCLNFAA}).
	Notice that the formula $s(\varepsilon)=s+\varepsilon x_3$ in turn gives that $s+\overline{u_3(\varepsilon)} \ge 0$ a.e. in $\omega$. An application of Theorem~4.2-1(b) thus gives $\overline{\bm{u}(\varepsilon)} \in\bm{U}_M(\omega)$. Observe that the convexity and closure of the set $\bm{U}_M(\omega)$ imply that the set $\bm{U}_M(\omega)$ is \emph{weakly closed} (cf., e.g., \cite{Brez11}).
	Combining the weak closure of the set $\bm{U}_M(\omega)$ with Theorem~4.2-1(c) in~\cite{Ciarlet2000} and the first two weak convergences in~\eqref{conv-proc} gives that $\overline{\bm{u}} \in\bm{U}_M(\omega)$.
	
	$(ii)$ \emph{The weak limits $u_i$ found in} $(i)$ \emph{are independent of the variable $x_3 \in (-1,1)$, in the sense that they satisfy, respectively,
		$$
		\partial_3 u_\alpha = 0 \textup{ in } L^2(\Omega) \quad \textup{ and } \quad \partial_3 u_3 = 0 \textup{ in } \mathcal{D}'(\Omega).
		$$
	}
	
	The proof is identical to that of part $(ii)$ of the proof of Theorem~4.4-1 in~\cite{Ciarlet2000} and is for this reason omitted.
	
	$(iii)$ \emph{For each} $\bm{\varphi}=(\varphi_i) \in\bm{\mathcal{D}}(\Omega)$ \emph{there exists a vector field} $\bm{v}(\bm{\varphi})=(v_i(\bm{\varphi})) \in \bm{U}(\varepsilon;\Omega)$ \emph{such that} $\|\bm{v}(\bm{\varphi})\|_{\bm{H}^1(\Omega)} \le C(\bm{\varphi})$, \emph{for some} $C(\bm{\varphi})>0$ \emph{depending on} $\bm{\varphi}$, \emph{and such that} $\partial_3 v_i(\bm{\varphi})=\varphi_i$ \emph{a.e. in} $\Omega$ for all $1\le i \le 3$.
	
	Fix $\bm{\varphi}\in\bm{\mathcal{D}}(\Omega)$. Define $\delta:=\textup{dist}(\Gamma_0,\textup{supp }\bm{\varphi})$, and observe that $\delta>0$ since the support of $\bm{\varphi}$ is compact in $\Omega$ by assumption. For each $r>0$, denote by $\omega_r$ the set:
	$$
	\omega_r:=\{y\in\omega;\textup{dist}(y,\gamma)>r\}.
	$$
	
	Denote by $\rho_{\delta/3}$ the standard mollifier with support contained in the ball $B(0;\delta/3) \subset \omega$ and denote by $\star$ the standard convolution product (cf., e.g., \cite{Brez11}).
	For each $1\le i \le 3$, define:
	\begin{equation}
		\label{v}
		v_i(\bm{\varphi})(x):=\left(\max_{x\in\overline{\Omega}}|\varphi_i(x)|\right) (\rho_{\delta/3}\star\chi_{\omega_{\delta/2}})(y)+\int_{0}^{x_3} \varphi_i(y,z) \dd z, \quad\textup{ for a.a. }x=(y,x_3)\in\Omega.
	\end{equation}
	
	Observe that $\bm{v}(\bm{\varphi}) \in \bm{V}(\Omega)$ and that $\partial_3 v_i(\bm{\varphi})=\varphi_i$, a.e. in $\Omega$, for all $1\le i \le 3$.
	Since $\rho_{\delta/3}\star\chi_{\omega_{\delta/2}}=1$ a.e. in $\textup{supp }\bm{\varphi}$, we obtain that~\eqref{v} gives:
	\begin{equation*}
		v_3(\bm{\varphi})(x)+s(\varepsilon)(x)\ge s(\varepsilon)(x)> 0,\quad\textup{ for a.a. }x\in\Omega,
	\end{equation*}
	thus showing that $\bm{v}(\bm{\varphi})$ verifies the constraint. An instance of the constant $C(\bm{\varphi})>0$ for which the estimate holds is $C(\bm{\varphi})=2\left(\max_{1\le i \le 3}\max_{x\in\overline{\Omega}}|\varphi_i(x)|\right)+\|\bm{\varphi}\|_{\bm{H}^1(\Omega)}$.
	
	$(iv)$ \emph{The weak limits $e_{i\|j} \in L^2(\Omega)$, $u_\alpha \in H^1(\Omega)$ and $u_3 \in L^2(\Omega)$ found in} $(i)$ \emph{satisfy}
	\begin{align*}
		e_{\alpha\|\beta}&=\gamma_{\alpha\beta}(\bm{u}) \textup{ in }L^2(\Omega),\\
		e_{\alpha\|3}&=0,\\
		e_{3\|3}&=-\frac{\lambda}{\lambda + 2\mu} a^{\alpha\beta} e_{\alpha\|\beta} \textup{ in } \Omega.
	\end{align*}
	
	Given any vector field $\bm{\varphi} = (\varphi_i) \in \boldsymbol{\mathcal{D}}(\Omega)$ and $\varepsilon_1$ as in Theorem~\ref{korn3D}, let the vector fields $\bm{v}(\bm{\varphi}) \in \bm{U}(\varepsilon;\Omega)$ be defined as in part (iii). Then the proof is identical to that of part~(v) in Theorem~4.1 of~\cite{CiaMarPie2018}.
	
	$(v)$ \emph{Let $\varepsilon_1$ be as in Theorem~\ref{korn3D}. Given any $\bm{\eta} \in \bm{U}_M(\omega) \cap \bm{H}^1_0(\omega)$, one can find a vector field $\tilde{\bm{v}}(\varepsilon;\bm{\eta})\in\bm{U}(\varepsilon;\Omega)$ such that:}
	\begin{equation*}
		\begin{aligned}
			\tilde{\bm{v}}(\varepsilon;\bm{\eta}) \in\bm{U}(\varepsilon;\Omega)&, \quad \textup{ for all } 0 < \varepsilon \le \varepsilon_1,\\
			\tilde{\bm{v}}(\varepsilon;\bm{\eta})\to \bm{\eta}&, \quad \textup{ in } \bm{H}^1(\Omega) \textup{ as } \varepsilon \to 0,\\
			e_{\alpha\|\beta}(\varepsilon;\tilde{\bm{v}}(\varepsilon;\bm{\eta})) \to \gamma_{\alpha\beta}(\bm{\eta})&, \quad \textup{ in } L^2(\Omega)\textup{ as } \varepsilon \to 0, \\
			e_{\alpha\|3}(\varepsilon;\tilde{\bm{v}}(\varepsilon;\bm{\eta}))\to\dfrac{\partial_\alpha\eta_3}{2}-b_\alpha^\sigma \eta_\sigma&,\quad\textup{ in } L^2(\Omega) \textup{ as }\varepsilon\to 0,\\
			e_{3\|3}(\varepsilon;\tilde{\bm{v}}(\varepsilon;\bm{\eta})) = 0&,\quad \textup{ a.e. in }\Omega.
		\end{aligned}
	\end{equation*}
	
	Given $\bm{\eta}\in\bm{U}_M(\omega)\cap\bm{H}^1_0(\omega)$, we want to show that that the vector field $\tilde{\bm{v}}(\varepsilon;\bm{\eta})$ whose components are defined by $\tilde{v}_i(\varepsilon,\bm{\eta}):=(1-\sqrt{\varepsilon})\eta_i$ satisfies the announced properties.
	That $\bm{v}(\varepsilon;\bm{\eta})\in\bm{H}^1(\Omega)$ and $\tilde{\bm{v}}(\varepsilon;\bm{\eta}) \to \bm{\eta}$ in $\bm{H}^1(\Omega)$ as $\varepsilon\to 0$ are obvious facts. Additionally, observe that $s(\varepsilon)+(1-\sqrt{\varepsilon})\eta_3\ge \sqrt{\varepsilon}s(\varepsilon)>0$, a.e. in $\Omega$, thus showing that, in fact, it is true that $\tilde{\bm{v}}(\varepsilon;\bm{\eta}) \in \bm{U}(\varepsilon;\Omega)$.
	The other properties are verified in light of the definition of the components of the linearised strain tensor $e_{i\|j}(\varepsilon,\cdot)$ and the definition of the components of the change of metric tensor $\gamma_{\alpha\beta}$.
	
	$(vi)$ \emph{The weak limit} $\bm{u}$ \emph{recovered in part} $(i)$ \emph{is such that} $\overline{\bm{u}}$ \emph{is the unique solution of Problem~\ref{problem2}.} That $\overline{\bm{u}} \in \bm{U}_M(\omega)$ has already been established in part~$(ii)$. In correspondence of any $\bm{\eta}\in\bm{U}_M(\omega)\cap \bm{H}^1_0(\omega)$, we can construct a vector field $\tilde{\bm{v}}(\varepsilon;\bm{\eta})$ as in part~$(v)$ and we can test the variational inequalities in Problem~\ref{problem1} at this vector field.
	Then the proof and the conclusion follow by proceed verbatim as in part~(ix) in~\cite{CiaMarPie2018}, with the sole difference that, instead of the ``density property'', we resort to Lemma~\ref{lem6}.
	
	$(vii)$ \emph{The weak convergences}
	\begin{equation*}
		\begin{aligned}
			u_\alpha(\varepsilon) \rightharpoonup u_\alpha&,\quad\textup{ in }H^1(\Omega) \textup{ as }\varepsilon\to 0,\\
			u_3(\varepsilon) \rightharpoonup u_3&,\quad\textup{ in }L^2(\Omega) \textup{ as }\varepsilon\to 0,
		\end{aligned}
	\end{equation*}
	\emph{established in} $(i)$ \emph{hold in fact for the whole family} $\{\bm{u}(\varepsilon)\}_{\varepsilon>0}$ \emph{and are, in fact, strong}. The proof follows verbatim the proof in parts~(x)--(xii) of~\cite{CiaMarPie2018} and is, for this reason, omitted.
\end{proof}

Note in passing that the assumption that $s(\varepsilon)>0$ in $\overline{\Omega}$ gives a sufficient condition ensuring (Lemma~\ref{lem5}) that $\min_{y\in\overline{\omega}}s(y)>0$, thus putting in the position of applying Lemma~\ref{lem6}. Recall that the assumption $s(\varepsilon)>0$ in $\overline{\Omega}$ also plays a crucial role in the proof of part $(v)$ of Theorem~\ref{th1}.

\section{Justification of Koiter's model for elliptic membrane shells subjected to the interior normal compliance contact condition}
\label{secKoiter}

This section is devoted to completing the investigation started in~\cite{Rodri2018} for what concerns the justification of Koiter's model for elliptic membranes subjected to the interior normal compliance contact condition. In Section~\ref{sec3}, we improved the result established in~\cite{Rodri2018} concerning the retrieval of the two-dimensional limit problem, Problem~\ref{problem2}, as a result of a rigorous asymptotic analysis as $\varepsilon\to 0$. It is now left to show that the solution of Koiter's model for elliptic membranes subjected to the interior normal compliance contact condition converges, in a suitable sense, to the solution of Problem~\ref{problem2}. Before establishing this fact, let us briefly recall how Koiter's model is formulated.

A commonly used \emph{two-dimensional} set of equations for modelling such a shell (``two-dimensional'' in the sense that it is posed over $\omega$ instead of $\Omega^\varepsilon$) was proposed in 1970 by Koiter~\cite{Koiter1970}. We now describe the modern formulation of this model in a general framework. By $\gamma_0$ we denote a non-zero measure portion of the boundary $\gamma:=\partial\omega$.
First, define the space
$$
\bm{V}_K (\omega):= \{\bm{\eta}=(\eta_i) \in H^1(\omega)\times H^1(\omega)\times H^2(\omega);\eta_i=\partial_{\nu}\eta_3=0 \textup{ on }\gamma_0\},
$$
where the symbol $\partial_{\nu}$ denotes the \emph{outer unit normal derivative operator along} $\gamma$, and define the norm $\|\cdot\|_{\bm{V}_K (\omega)}$ by
$$
\|\bm{\eta}\|_{\bm{V}_K(\omega)}:=\left\{\sum_{\alpha}\|\eta_\alpha\|_{H^1(\omega)}^2+\|\eta_3\|_{H^2(\omega)}^2\right\}^{1/2},\quad\textup{ for each }\bm{\eta}=(\eta_i)\in \bm{V}_K(\omega).
$$

Finally, define the bilinear forms $B_M(\cdot,\cdot)$ and $B_F(\cdot,\cdot)$ by
\begin{align*}
	B_M(\bm{\xi}, \bm{\eta})&:=\int_\omega a^{\alpha \beta \sigma \tau} \gamma_{\sigma\tau}(\bm{\xi}) \gamma_{\alpha\beta}(\bm{\eta}) \sqrt{a} \dd y,\\
	B_F(\bm{\xi}, \bm{\eta})&:=\dfrac13 \int_\omega a^{\alpha \beta \sigma \tau} \rho_{\sigma \tau}(\bm{\xi}) \rho_{\alpha \beta}(\bm{\eta}) \sqrt{a} \dd y,
\end{align*}
for each $\bm{\xi}=(\xi_i)\in \bm{V}_K(\omega)$ and each $\bm{\eta}=(\eta_i) \in \bm{V}_K(\omega)$.
Define the linear form $\ell^\varepsilon$ by
$$
\ell^\varepsilon(\bm{\eta}):=\int_\omega p^{i,\varepsilon} \eta_i \sqrt{a} \dd y, \textup{ for each } \bm{\eta}=(\eta_i) \in \bm{V}_K(\omega),
$$
where $p^{i,\varepsilon}(y):=\int_{-\varepsilon} ^\varepsilon f^{i,\varepsilon}(y,x_3) \dd x_3$ for a.a. $y \in \omega$.

Then the \emph{total energy} of the shell is the \emph{quadratic functional} $J:\bm{V}_K(\omega) \to \mathbb{R}$ defined by:
$$
J(\bm{\eta}):=\dfrac{\varepsilon}{2}B_M(\bm{\eta},\bm{\eta})+\dfrac{\varepsilon^3}{2}B_F(\bm{\eta},\bm{\eta})-\ell^\varepsilon(\bm{\eta}),\quad\textup{ for each }\bm{\eta} \in \bm{V}_K(\omega).
$$
The term $\frac{\varepsilon}{2}B_M(\cdot,\cdot)$ and $\frac{\varepsilon^3}{2}B_F(\cdot,\cdot)$ respectively represent the \emph{membrane part} and the \emph{flexural part} of the total energy, as aptly recalled by the subscripts ``$M$'' and ``$F$''.

Since the elliptic membrane under consideration is subjected to the interior normal compliance contact condition, the total energy of the shell remains \emph{unchanged} while the set over which the energy is to be minimised is now a \emph{strict subset of} $\bm{V}_K(\omega)$ (denoted by $\bm{U}_K(\omega)$ below), and takes into account the imposed confinement condition. These assumptions lead to the following definition of a variational problem, denoted $\mathcal{P}_K^\varepsilon(\omega)$, which constitutes \emph{Koiter's model for a general linearly elastic shell subjected to the interior normal compliance contact condition}.

\begin{customprob}{$\mathcal{P}_K^\varepsilon(\omega)$}
	\label{problem3}
	Find $\bm{\zeta}_K^\varepsilon=(\zeta_{K,i}^\varepsilon) \in \bm{U}_K(\omega):=\{\bm{\eta}=(\eta_i)\in \bm{V}_K(\omega); \eta_3+s \ge 0 \textup{ for all }y \in \omega\}$ that satisfies the variational inequalities:
	\begin{equation*}
		\varepsilon B_M(\bm{\zeta}_K^\varepsilon, \bm{\eta}-\bm{\zeta}_K^\varepsilon) + \varepsilon^3 B_F(\bm{\zeta}_K^\varepsilon, \bm{\eta}-\bm{\zeta}_K^\varepsilon) \ge \ell^\varepsilon(\bm{\eta}-\bm{\zeta}_K^\varepsilon),
	\end{equation*}
	for all $\bm{\eta}=(\eta_i) \in \bm{U}_K(\omega)$.
	\bqed
\end{customprob}

Note in passing that the constraint is defined over the entire set $\overline{\omega}$ since, thanks to the Rellich-Kondra\v{s}ov theorem (cf., e.g., Theorem~6.6-3 in~\cite{PGCLNFAA}), the immersion $H^2(\omega) \hookrightarrow \mathcal{C}^0(\overline{\omega})$ is compact when $\omega\subset\mathbb{R}^2$.

Thanks to the uniform positive-definiteness of the fourth order two-dimensional elasticity tensor $\{a^{\alpha\beta\sigma\tau}\}$ (cf., e.g., Theorem~3.3-2 in~\cite{Ciarlet2000}), thanks to an inequality of Korn's type on a general surface (cf., e.g. Theorem~2.6-4 in~\cite{Ciarlet2000}) and since the set $\bm{U}_K(\omega)$ is non-empty, closed and convex, it is straightforward to observe that Problem~\ref{problem3} admits a unique solution.

The next theorem, that constitutes the second main result of this paper, aims to show that the solution $\bm{\zeta}^\varepsilon_K$ of Problem~\ref{problem3} converges, in some suitable sense, to the solution of Problem~\ref{problem2} as $\varepsilon\to 0$.

\begin{theorem}
\label{th2}
Let $\omega$ be a domain in $\mathbb{R}^2$, and let $\bm{\theta} \in \mathcal{C}^3(\overline{\omega};\mathbb{E}^3)$ be an immersion.  
Suppose the gap function $s$ belongs to $\mathcal{C}^2(\overline{\omega})$ and satisfies $s > 0$ on $\overline{\omega}$.  
Consider a family of elliptic membrane shells with thickness $2\varepsilon$ converging to zero, all sharing the same middle surface $\bm{\theta}(\overline{\omega})$. Assume there exist functions $f^i \in L^2(\Omega)$, independent of $\varepsilon$, satisfying assumption~\eqref{ass-data}, i.e.,
$$
f^{i,\varepsilon}(x^\varepsilon) = f^i(x) \quad \textup{for a.e. } x^\varepsilon \in \Omega^\varepsilon \textup{ and for each } \varepsilon > 0.
$$

For each $\varepsilon > 0$, let $\bm{\zeta}_K^\varepsilon$ denote the solution to Problem~\ref{problem3}.  
Then the following convergences hold:
\begin{equation*}
	\begin{aligned}
		\zeta_{\alpha,K}^\varepsilon \bm{a}^\alpha \to \zeta_\alpha \bm{a}^\alpha &,\quad \textup{in } \bm{H}^1(\omega) \textup{ as } \varepsilon \to 0,\\
		\zeta_{3,K}^\varepsilon \bm{a}^3 \to \zeta_3 \bm{a}^3 &,\quad \textup{in } \bm{L}^2(\omega) \textup{ as } \varepsilon \to 0,
	\end{aligned}
\end{equation*}
where $\bm{\zeta}$ is the unique solution of Problem~\ref{problem2}.
\end{theorem}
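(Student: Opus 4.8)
The plan is to run the by-now classical scheme for passing from Koiter's model to a two-dimensional membrane model, adapted to the unilateral constraint. \textbf{Step 1: a priori estimates and extraction.} Since $s>0$ on $\overline{\omega}$, the null field belongs to $\bm{U}_K(\omega)$, so testing the variational inequalities of Problem~\ref{problem3} with $\bm{\eta}=\bm{0}$ gives $\varepsilon B_M(\bm{\zeta}_K^\varepsilon,\bm{\zeta}_K^\varepsilon)+\varepsilon^3 B_F(\bm{\zeta}_K^\varepsilon,\bm{\zeta}_K^\varepsilon)\le\ell^\varepsilon(\bm{\zeta}_K^\varepsilon)$. Assumption~\eqref{ass-data} yields $p^{i,\varepsilon}=\varepsilon p^i$, hence $\ell^\varepsilon=\varepsilon\,\ell$ with $\ell(\bm{\eta}):=\int_\omega p^i\eta_i\sqrt{a}\dd y$; dividing by $\varepsilon$, $B_M(\bm{\zeta}_K^\varepsilon,\bm{\zeta}_K^\varepsilon)+\varepsilon^2 B_F(\bm{\zeta}_K^\varepsilon,\bm{\zeta}_K^\varepsilon)\le\ell(\bm{\zeta}_K^\varepsilon)$. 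Combining the uniform positive-definiteness of $\{a^{\alpha\beta\sigma\tau}\}$ (Theorem~3.3-2 in~\cite{Ciarlet2000}) with Korn's inequality on the elliptic surface (Theorem~\ref{korn}, legitimate since $\gamma_0=\gamma$ forces $\bm{V}_K(\omega)\subset\bm{V}_M(\omega)$), one obtains $\|\bm{\zeta}_K^\varepsilon\|_{\bm{V}_M(\omega)}^2\le C\,\ell(\bm{\zeta}_K^\varepsilon)\le C'\|\bm{\zeta}_K^\varepsilon\|_{\bm{V}_M(\omega)}$, whence $\zeta_{\alpha,K}^\varepsilon$ is bounded in $H^1(\omega)$, $\zeta_{3,K}^\varepsilon$ is bounded in $L^2(\omega)$, and $\varepsilon\|\rho_{\alpha\beta}(\bm{\zeta}_K^\varepsilon)\|_{L^2(\omega)}\le C''$. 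Along a subsequence, $\zeta_{\alpha,K}^\varepsilon\rightharpoonup\zeta_\alpha$ in $H^1(\omega)$ (hence, by Rellich--Kondra\v{s}ov, strongly in $L^2(\omega)$) and $\zeta_{3,K}^\varepsilon\rightharpoonup\zeta_3$ in $L^2(\omega)$; the explicit form of $\gamma_{\alpha\beta}$ then gives $\gamma_{\alpha\beta}(\bm{\zeta}_K^\varepsilon)\rightharpoonup\gamma_{\alpha\beta}(\bm{\zeta})$ in $L^2(\omega)$. Since each $\zeta_{3,K}^\varepsilon+s\ge0$ and $\bm{U}_M(\omega)$ is convex and closed, hence weakly closed, the limit $\bm{\zeta}=(\zeta_i)$ lies in $\bm{U}_M(\omega)$.

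\textbf{Step 2: passage to the limit in the inequality.} For fixed $\bm{\eta}\in\bm{U}_K(\omega)$, rewrite the divided inequality as $B_M(\bm{\zeta}_K^\varepsilon,\bm{\eta})+\varepsilon^2 B_F(\bm{\zeta}_K^\varepsilon,\bm{\eta})-\ell(\bm{\eta})+\ell(\bm{\zeta}_K^\varepsilon)\ge B_M(\bm{\zeta}_K^\varepsilon,\bm{\zeta}_K^\varepsilon)+\varepsilon^2 B_F(\bm{\zeta}_K^\varepsilon,\bm{\zeta}_K^\varepsilon)\ge B_M(\bm{\zeta}_K^\varepsilon,\bm{\zeta}_K^\varepsilon)$. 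Using $\varepsilon\|\rho_{\alpha\beta}(\bm{\zeta}_K^\varepsilon)\|_{L^2(\omega)}\le C''$ we get $\varepsilon^2 B_F(\bm{\zeta}_K^\varepsilon,\bm{\eta})\to0$; the weak convergences give $B_M(\bm{\zeta}_K^\varepsilon,\bm{\eta})\to B_M(\bm{\zeta},\bm{\eta})$ and $\ell(\bm{\zeta}_K^\varepsilon)\to\ell(\bm{\zeta})$; and since $\bm{t}\mapsto\int_\omega a^{\alpha\beta\sigma\tau}t_{\sigma\tau}t_{\alpha\beta}\sqrt{a}\dd y$ is a nonnegative quadratic form, $\liminf_{\varepsilon\to0}B_M(\bm{\zeta}_K^\varepsilon,\bm{\zeta}_K^\varepsilon)\ge B_M(\bm{\zeta},\bm{\zeta})$. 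Taking $\liminf$ yields $B_M(\bm{\zeta},\bm{\eta}-\bm{\zeta})\ge\ell(\bm{\eta}-\bm{\zeta})$ for every $\bm{\eta}\in\bm{U}_K(\omega)$.

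\textbf{Step 3: density and identification.} This is the main obstacle: the Koiter inequalities are posed over $\bm{U}_K(\omega)$, whereas Problem~\ref{problem2} is posed over the strictly larger set $\bm{U}_M(\omega)$. Since $\gamma_0=\gamma$, every element of the set $\bm{U}_M^{(2)}(\omega)$ of Lemma~\ref{lem6} (whose transverse component lies in $\mathcal{D}(\omega)$, hence vanishes identically near $\gamma$ together with its normal derivative) belongs to $\bm{U}_K(\omega)$, and $\bm{U}_K(\omega)\subset\bm{U}_M(\omega)$; therefore Lemma~\ref{lem6} shows that $\bm{U}_K(\omega)$ is dense in $\bm{U}_M(\omega)$ for the norm of $\bm{V}_M(\omega)=H^1(\omega)\times H^1(\omega)\times L^2(\omega)$ — and this is precisely where the hypothesis $s>0$ on $\overline{\omega}$ enters (through Lemma~\ref{lem5} it also shows that $s>0$ on $\overline{\omega}$ suffices to secure $\min_{\overline{\omega}}s>0$). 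Since $\bm{\eta}\mapsto B_M(\bm{\zeta},\bm{\eta})$ and $\ell$ are continuous on $\bm{V}_M(\omega)$, the inequality of Step 2 extends by density to all $\bm{\eta}\in\bm{U}_M(\omega)$; thus $\bm{\zeta}$ solves Problem~\ref{problem2}. By uniqueness of that solution, $\bm{\zeta}$ is the field named in the statement, and since every subsequence of the original family admits a further subsequence converging to this same limit, the whole family $\{\bm{\zeta}_K^\varepsilon\}$ converges weakly.

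\textbf{Step 4: upgrade to strong convergence.} Choose $\bm{\eta}_n\in\bm{U}_K(\omega)$ with $\bm{\eta}_n\to\bm{\zeta}$ in $\bm{V}_M(\omega)$ (again by Lemma~\ref{lem6}). From $B_M(\bm{\zeta}_K^\varepsilon,\bm{\zeta}_K^\varepsilon)\le B_M(\bm{\zeta}_K^\varepsilon,\bm{\eta}_n)+\varepsilon^2 B_F(\bm{\zeta}_K^\varepsilon,\bm{\eta}_n)-\ell(\bm{\eta}_n)+\ell(\bm{\zeta}_K^\varepsilon)$ we get $\limsup_{\varepsilon\to0}B_M(\bm{\zeta}_K^\varepsilon,\bm{\zeta}_K^\varepsilon)\le B_M(\bm{\zeta},\bm{\eta}_n)-\ell(\bm{\eta}_n)+\ell(\bm{\zeta})$, and letting $n\to\infty$, $\limsup_{\varepsilon\to0}B_M(\bm{\zeta}_K^\varepsilon,\bm{\zeta}_K^\varepsilon)\le B_M(\bm{\zeta},\bm{\zeta})$. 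With the lower semicontinuity of Step 2 this forces $B_M(\bm{\zeta}_K^\varepsilon,\bm{\zeta}_K^\varepsilon)\to B_M(\bm{\zeta},\bm{\zeta})$, and since also $B_M(\bm{\zeta}_K^\varepsilon,\bm{\zeta})\to B_M(\bm{\zeta},\bm{\zeta})$, expansion gives $B_M(\bm{\zeta}_K^\varepsilon-\bm{\zeta},\bm{\zeta}_K^\varepsilon-\bm{\zeta})\to0$; by positive-definiteness $\sum_{\alpha,\beta}\|\gamma_{\alpha\beta}(\bm{\zeta}_K^\varepsilon-\bm{\zeta})\|_{L^2(\omega)}\to0$, and Korn's inequality (Theorem~\ref{korn}) applied to $\bm{\zeta}_K^\varepsilon-\bm{\zeta}\in\bm{V}_M(\omega)$ gives $\zeta_{\alpha,K}^\varepsilon\to\zeta_\alpha$ in $H^1(\omega)$ and $\zeta_{3,K}^\varepsilon\to\zeta_3$ in $L^2(\omega)$. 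Multiplying by the fixed fields $\bm{a}^\alpha,\bm{a}^3\in\mathcal{C}^2(\overline{\omega};\mathbb{E}^3)$ yields the announced convergences in $\bm{H}^1(\omega)$ and $\bm{L}^2(\omega)$. Apart from the bookkeeping in Steps 2 and 4, which is routine, the only genuinely delicate point is the density of $\bm{U}_K(\omega)$ in $\bm{U}_M(\omega)$ for the weak norm, i.e. Lemma~\ref{lem6}; this is exactly the ingredient that the interior normal compliance condition together with $s>0$ makes available, and without which the limit inequality could not be propagated from $\bm{U}_K(\omega)$ to $\bm{U}_M(\omega)$.
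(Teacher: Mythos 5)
Your proof is correct and follows essentially the same route as the paper: a priori bound from testing with $\bm{\eta}=\bm{0}$, extraction of weak limits and weak closedness of $\bm{U}_M(\omega)$, passage to the limit via weak lower semicontinuity of $B_M$, extension of the limiting inequality from $\bm{U}_K(\omega)$ to $\bm{U}_M(\omega)$ through Lemma~\ref{lem6}, and strong convergence from convergence of the membrane energies combined with the positive-definiteness of $\{a^{\alpha\beta\sigma\tau}\}$ and Korn's inequality (Theorem~\ref{korn}). Your deviations are only cosmetic refinements — approximating $\bm{\zeta}$ by $\bm{\eta}_n\in\bm{U}_K(\omega)$ in the strong-convergence step instead of inserting $\bm{\eta}=\bm{\zeta}$ directly (which the paper does somewhat informally, even though $B_F(\bm{\zeta}_K^\varepsilon,\cdot)$ requires an $H^2$ transverse component), and the appeal to Lemma~\ref{lem5} for $\min_{\overline{\omega}}s>0$ is unnecessary, since $s\in\mathcal{C}^2(\overline{\omega})$ with $s>0$ on the compact set $\overline{\omega}$ already yields it.
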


\begin{proof}
For clarity, the proof is divided into three parts, labelled $(i)$--$(iii)$.

$(i)$ \emph{Uniform boundedness of the family $\{\bm{\zeta}_K^\varepsilon\}_{\varepsilon>0}$}.  
By assumption~\eqref{ass-data}, the variational inequalities in Problem~\ref{problem3} simplify to
$$
B_M(\bm{\zeta}_K^\varepsilon, \bm{\eta} - \bm{\zeta}_K^\varepsilon) + \varepsilon^2 B_F(\bm{\zeta}_K^\varepsilon, \bm{\eta} - \bm{\zeta}_K^\varepsilon) \geq \int_\omega p^i (\eta_i - \zeta_{K,i}^\varepsilon) \sqrt{a} \dd y,
$$
for all $\bm{\eta} \in \bm{U}_K(\omega)$.
This implies
\begin{equation*}
	B_M(\bm{\zeta}_K^\varepsilon, \bm{\zeta}_K^\varepsilon) + \varepsilon^2 B_F(\bm{\zeta}_K^\varepsilon, \bm{\zeta}_K^\varepsilon) \leq B_M(\bm{\zeta}_K^\varepsilon, \bm{\eta}) + \varepsilon^2 B_F(\bm{\zeta}_K^\varepsilon, \bm{\eta}) - \int_\omega p^i (\eta_i - \zeta_{K,i}^\varepsilon) \sqrt{a} \dd y,
\end{equation*}
for all $\bm{\eta} \in \bm{U}_K(\omega)$.
Using the uniform positive-definiteness of the tensor $\{a^{\alpha\beta\sigma\tau}\}$ (cf. Theorem~3.3-2 in~\cite{Ciarlet2000}) and a Korn-type inequality on surfaces (cf. Theorem~2.6-3 in~\cite{Ciarlet2000}), there exists a constant $C_1 > 0$ independent of $\varepsilon$ such that:
\begin{equation}
	\label{pezzo1}
	\|\bm{\zeta}_K^\varepsilon\|_{\bm{V}_M(\omega)}^2 \leq C_1 B_M(\bm{\zeta}_K^\varepsilon, \bm{\zeta}_K^\varepsilon).
\end{equation}

Furthermore, the continuity of $B_M(\cdot, \cdot)$ and $B_F(\cdot, \cdot)$ ensures the existence of a constant $C_2 > 0$ satisfying
\begin{equation}
	\label{pezzo2}
	\begin{aligned}
		&B_M(\bm{\zeta}_K^\varepsilon, \bm{\eta}) + \varepsilon^2 B_F(\bm{\zeta}_K^\varepsilon, \bm{\eta}) - \int_\omega p^i (\eta_i - \zeta_{K,i}^\varepsilon) \sqrt{a} \dd y\\
		&\leq C_2 \left( \|\bm{\zeta}_K^\varepsilon\|_{\bm{V}_M(\omega)} \|\bm{\eta}\|_{\bm{V}_M(\omega)} + \varepsilon^2 \|\bm{\zeta}_K^\varepsilon\|_{\bm{V}_M(\omega)} \|\bm{\eta}\|_{\bm{V}_M(\omega)} + \|\bm{\zeta}_K^\varepsilon\|_{\bm{V}_M(\omega)} + \|\bm{\eta}\|_{\bm{V}_M(\omega)} \right),
	\end{aligned}
\end{equation}
for all $\bm{\eta} = (\eta_i) \in \bm{U}_K(\omega)$. Setting $\bm{\eta} = \bm{0}$ in~\eqref{pezzo2} and using~\eqref{pezzo1} yields:
\begin{equation*}
	\|\bm{\zeta}_K^\varepsilon\|_{\bm{V}_M(\omega)} \leq C_1 C_2, \quad \textup{for all } \varepsilon > 0.
\end{equation*}

$(ii)$ \emph{Weak convergence of the family $\{\bm{\zeta}_K^\varepsilon\}_{\varepsilon>0}$}.  
By part $(i)$, the family $\{\bm{\zeta}_K^\varepsilon\}_{\varepsilon>0}$ is bounded in $\bm{V}_M(\omega)$.  
Hence, there exists a subsequence (still denoted $\{\bm{\zeta}_K^\varepsilon\}_{\varepsilon>0}$), a vector field $\bm{\zeta}^\ast \in \bm{V}_M(\omega)$, and functions $\rho_{\alpha\beta}^{-1} \in L^2(\omega)$ such that:
\begin{align*}
	\bm{\zeta}_K^\varepsilon \rightharpoonup \bm{\zeta}^\ast &,\quad \textup{in } \bm{V}_M(\omega) \textup{ as }\varepsilon\to 0,\\
	\varepsilon \rho_{\alpha\beta}(\bm{\zeta}_K^\varepsilon) \rightharpoonup \rho_{\alpha\beta}^{-1} &,\quad \textup{in } L^2(\omega) \textup{ as }\varepsilon\to 0,
\end{align*}
where the second convergence follows from the uniform positive-definiteness of $\{a^{\alpha\beta\sigma\tau}\}$.

Since $\bm{U}_M(\omega)$ is non-empty, closed, and convex, then it is \emph{weakly closed} (cf., e.g., \cite{Brez11}). Since $\bm{\zeta}^\varepsilon_K\in\bm{U}_M(\omega)$ for all $\varepsilon>0$, it thus follows that $\bm{\zeta}^\ast \in \bm{U}_M(\omega)$.
Fix $\bm{\eta} \in \bm{U}_K(\omega)$. The variational inequalities for $\bm{\zeta}_K^\varepsilon$ yield:
\begin{equation*}
	B_M(\bm{\zeta}_K^\varepsilon, \bm{\zeta}_K^\varepsilon) \le B_M(\bm{\zeta}_K^\varepsilon, \bm{\eta}) + \varepsilon^2 B_F(\bm{\zeta}_K^\varepsilon, \bm{\eta}) - \int_\omega p^i (\eta_i - \zeta_{K,i}^\varepsilon) \sqrt{a} \dd y.
\end{equation*}

Taking the limit as $\varepsilon \to 0$ gives:
\begin{equation*}
	\limsup_{\varepsilon \to 0} B_M(\bm{\zeta}_K^\varepsilon, \bm{\zeta}_K^\varepsilon) \le B_M(\bm{\zeta}^\ast, \bm{\eta}) - \int_\omega p^i (\eta_i - \zeta_{K,i}^\varepsilon) \sqrt{a} \dd y.
\end{equation*}

On the other hand,
$$
0 \le B_M(\bm{\zeta}_K^\varepsilon - \bm{\zeta}^\ast,\bm{\zeta}_K^\varepsilon - \bm{\zeta}^\ast) = B_M(\bm{\zeta}_K^\varepsilon,\bm{\zeta}_K^\varepsilon) - 2 B_M(\bm{\zeta}_K^\varepsilon,\bm{\zeta}^\ast) + B_M(\bm{\zeta}^\ast,\bm{\zeta}^\ast),
$$
which implies:
$$
2 B_M(\bm{\zeta}_K^\varepsilon, \bm{\zeta}^\ast) - B_M(\bm{\zeta}^\ast, \bm{\zeta}^\ast) \le B_M(\bm{\zeta}_K^\varepsilon,\bm{\zeta}_K^\varepsilon).
$$

Taking the limit as $\varepsilon \to 0$ yields:
\begin{equation*}
	B_M(\bm{\zeta}^\ast,\bm{\zeta}^\ast) \le \liminf_{\varepsilon\to 0} B_M(\bm{\zeta}_K^\varepsilon,\bm{\zeta}_K^\varepsilon).
\end{equation*}

Combining these results, we obtain:
\begin{equation*}
	B_M(\bm{\zeta}^\ast,\bm{\eta} - \bm{\zeta}^\ast) \ge \ell(\bm{\eta} - \bm{\zeta}^\ast), \quad \textup{for all } \bm{\eta} \in \bm{U}_K(\omega).
\end{equation*}

By Lemma~\ref{lem6}, this extends to all $\bm{\eta} \in \bm{U}_M(\omega)$, proving that $\bm{\zeta}^\ast$ is a solution of Problem~\ref{problem2}.  
Since Problem~\ref{problem2} has a unique solution, $\bm{\zeta} = \bm{\zeta}^\ast$.  
Consequently, the entire family $\{\bm{\zeta}_K^\varepsilon\}_{\varepsilon>0}$ weakly converges to $\bm{\zeta}$ in $\bm{V}_M(\omega)$ as $\varepsilon \to 0$.

$(iii)$ \emph{Strong convergence of the family $\{\bm{\zeta}_K^\varepsilon\}_{\varepsilon>0}$}.  
The $\bm{V}_K(\omega)$-ellipticity of $B_M(\cdot, \cdot)$ and Lemma~\ref{lem6} yield:
\begin{align*}
	0 &\leq \frac{1}{C_1} \|\bm{\zeta}_K^\varepsilon - \bm{\zeta}\|_{\bm{V}_M(\omega)}^2 \leq B_M(\bm{\zeta}_K^\varepsilon - \bm{\zeta}, \bm{\zeta}_K^\varepsilon - \bm{\zeta})
	= B_M(\bm{\zeta}_K^\varepsilon, \bm{\zeta}_K^\varepsilon) - 2 B_M(\bm{\zeta}_K^\varepsilon, \bm{\zeta}) + B_M(\bm{\zeta}, \bm{\zeta})\\
	&\le B_M(\bm{\zeta}_K^\varepsilon,\bm{\eta}) + \varepsilon^2 B_F(\bm{\zeta}_K^\varepsilon, \bm{\eta}) - \int_\omega p^i (\eta_i - \zeta_{K,i}^\varepsilon) \sqrt{a} \dd y - 2 B_M(\bm{\zeta}_K^\varepsilon,\bm{\zeta}) + B_M(\bm{\zeta},\bm{\zeta}),
\end{align*}
for all $\bm{\eta} \in \bm{U}_M(\omega)$.  
Setting $\bm{\eta} = \bm{\zeta}$ and taking the $\limsup$ as $\varepsilon \to 0$ gives:
$$
\limsup_{\varepsilon \to 0} \|\bm{\zeta}_K^\varepsilon - \bm{\zeta}\|_{\bm{V}_M(\omega)}^2 \le 0,
$$
which implies:
$$
\bm{\zeta}_K^\varepsilon \to \bm{\zeta}, \quad \textup{ in } \bm{V}_M(\omega) \textup{ as }\varepsilon\to 0,
$$
completing the proof.
\end{proof}

To complete this section, we de-scale the solution $\bm{u}(\varepsilon)$ of Problem~\ref{problem1}, and we return to the original framework of Problem~\ref{problem0}. In light of Theorems~\ref{th1} and~\ref{th2}, we are able to establish that the solution of Koiter's model for elliptic membranes subjected to the interior normal compliance contact condition (Problem~\ref{problem3}) asymptotically behaves like the solution of Problem~\ref{problem0} as the thickness $\varepsilon$ becomes smaller and smaller.

\begin{theorem}
\label{thj}
Let $\omega$ be a domain in $\mathbb{R}^2$ and let $\bm{\theta} \in \mathcal{C}^3(\overline{\omega};\mathbb{E}^3)$ be an immersion. Consider a family of elliptic membrane shells with thickness $2\varepsilon$ approaching zero and with each having the same middle surface $\bm{\theta}(\overline{\omega})$, and assume that there exist functions $f^i \in L^2(\Omega)$ independent of $\varepsilon$ such that the following assumption on the applied body force density holds:
$$
f^{i, \varepsilon} (x^\varepsilon) = f^i(x),\quad \textup{ for a.e. } x^\varepsilon \in \Omega^\varepsilon \textup{ for each }\varepsilon >0.
$$

Assume that the scaled gap function $s(\varepsilon)$ is such that $s(\varepsilon)>0$ in $\overline{\Omega}$.

Let $\bm{\zeta} \in \bm{U}_M(\omega)$ denote the solution of Problem~\ref{problem2}, and for each $\varepsilon>0$, let $\bm{u}^\varepsilon=(u_i^\varepsilon)$ denote the solution of Problem~\ref{problem1}. Then the following convergences hold:
\begin{align*}
	\dfrac{1}{2\varepsilon}\int_{-\varepsilon}^{\varepsilon} u_\alpha^\varepsilon \bm{g}^{\alpha,\varepsilon} dx_3^\varepsilon \to \zeta_\alpha \bm{a}^\alpha&,\quad\textup{ in } \bm{H}^1(\omega)\textup{ as }\varepsilon \to 0,\\ 
	\dfrac{1}{2 \varepsilon}\int_{-\varepsilon}^{\varepsilon} u_3^\varepsilon \bm{g}^{3,\varepsilon} dx_3^\varepsilon \to \zeta_3 \bm{a}^3&,\quad\textup{ in } \bm{L}^2(\omega)\textup{ as }\varepsilon \to 0.
\end{align*}
\end{theorem}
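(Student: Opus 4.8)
The plan is to combine the two convergence results already established --- Theorem~\ref{th1}, which links the (scaled) three-dimensional solution $\bm{u}(\varepsilon)$ of Problem~\ref{problem1} to the solution $\bm{\zeta}$ of Problem~\ref{problem2}, and Theorem~\ref{th2}, which links the solution $\bm{\zeta}_K^\varepsilon$ of Koiter's model (Problem~\ref{problem3}) to the same $\bm{\zeta}$ --- with the change of variables relating $\bm{u}^\varepsilon$ to its scaled counterpart $\bm{u}(\varepsilon)$. Concretely, first I would recall that, by definition of the scalings, $u_i^\varepsilon(x^\varepsilon) = u_i(\varepsilon)(x)$ with $x^\varepsilon_3 = \varepsilon x_3$, so that
$$
\frac{1}{2\varepsilon}\int_{-\varepsilon}^{\varepsilon} u_i^\varepsilon(\cdot,x_3^\varepsilon)\dd x_3^\varepsilon = \frac12\int_{-1}^{1} u_i(\varepsilon)(\cdot,x_3)\dd x_3 = \overline{u_i(\varepsilon)},
$$
the average over the thickness of the scaled unknown. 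Theorem~\ref{th1} gives $u_\alpha(\varepsilon)\to u_\alpha$ in $H^1(\Omega)$ and $u_3(\varepsilon)\to u_3$ in $L^2(\Omega)$ with $\bm{u}$ independent of $x_3$ and $\overline{\bm{u}} = \bm{\zeta}$; hence, since averaging over $x_3\in(-1,1)$ is a bounded linear operator from $H^1(\Omega)$ into $H^1(\omega)$ (resp. from $L^2(\Omega)$ into $L^2(\omega)$), one gets $\overline{u_\alpha(\varepsilon)}\to\zeta_\alpha$ in $H^1(\omega)$ and $\overline{u_3(\varepsilon)}\to\zeta_3$ in $L^2(\omega)$.

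The next step is to insert the contravariant basis vectors. I would write
$$
\frac{1}{2\varepsilon}\int_{-\varepsilon}^{\varepsilon} u_\alpha^\varepsilon \bm{g}^{\alpha,\varepsilon}\dd x_3^\varepsilon
= \frac12\int_{-1}^{1} u_\alpha(\varepsilon)\,\bm{g}^\alpha(\varepsilon)\dd x_3,
$$
and then use the asymptotics of Lemma~\ref{lem:2}, namely $\bm{g}^\alpha(\varepsilon) = \bm{a}^\alpha + \varepsilon x_3 b^\alpha_\sigma \bm{a}^\sigma + \mathcal{O}(\varepsilon^2)$ and $\bm{g}^3(\varepsilon) = \bm{a}^3$, where $\bm{a}^i$ is understood as extended to $\overline{\Omega}$ independently of $x_3$. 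Splitting $\bm{g}^\alpha(\varepsilon) = \bm{a}^\alpha + \mathcal{O}(\varepsilon)$ in the sup norm, the product $u_\alpha(\varepsilon)\bm{g}^\alpha(\varepsilon)$ decomposes as $u_\alpha(\varepsilon)\bm{a}^\alpha$ plus a term bounded in $H^1(\Omega)$ (resp. $L^2(\Omega)$) by $C\varepsilon\|u_\alpha(\varepsilon)\|_{H^1(\Omega)}$, which vanishes as $\varepsilon\to0$ because $\{u_\alpha(\varepsilon)\}$ is bounded (indeed convergent) in $H^1(\Omega)$. Here one must be a little careful that multiplication by the fixed $\mathcal{C}^2(\overline{\omega})$ vector field $\bm{a}^\alpha$ (and by the $\mathcal{C}^1$ remainder coefficients) is continuous on $H^1$; this is where the $\mathcal{C}^3$ regularity of $\bm{\theta}$, hence $\mathcal{C}^2$ regularity of $\bm{a}^i$, is used. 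Averaging and passing to the limit then gives $\overline{u_\alpha(\varepsilon)}\bm{a}^\alpha \to \zeta_\alpha\bm{a}^\alpha$ in $\bm{H}^1(\omega)$; the transverse component is even simpler since $\bm{g}^3(\varepsilon)=\bm{a}^3$ exactly, so $\overline{u_3(\varepsilon)}\bm{a}^3\to\zeta_3\bm{a}^3$ in $\bm{L}^2(\omega)$ follows directly from the $L^2(\Omega)$ convergence of $u_3(\varepsilon)$.

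Finally, I would note that Theorem~\ref{th2} is not strictly needed for the stated conclusion of Theorem~\ref{thj}, since everything follows from Theorem~\ref{th1} and the de-scaling; Theorem~\ref{th2}'s role is the conceptual one announced in the surrounding text, namely that Koiter's model $\mathcal{P}_K^\varepsilon(\omega)$ shares the \emph{same} asymptotic limit $\bm{\zeta}$, so that the averaged three-dimensional displacement and the Koiter displacement are asymptotically indistinguishable. I expect the only genuinely delicate point to be the justification that multiplication by the basis fields $\bm{a}^\alpha$ and by the $\mathcal{O}(\varepsilon)$ coefficient fields is continuous in the $H^1(\Omega)$ topology uniformly in $\varepsilon$ --- i.e. controlling the remainder $\|u_\alpha(\varepsilon)(\bm{g}^\alpha(\varepsilon)-\bm{a}^\alpha)\|_{H^1(\Omega)}$ in terms of $\varepsilon\|u_\alpha(\varepsilon)\|_{H^1(\Omega)}$ --- which uses both the uniform $\mathcal{C}^1(\overline{\Omega})$ bounds on $\bm{g}^\alpha(\varepsilon)$ from Lemma~\ref{lem:2} and the product rule for Sobolev functions; everything else is a routine combination of the two earlier theorems with the linear change of variables.
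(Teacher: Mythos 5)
Your proposal is correct and takes essentially the same route as the paper: the paper omits the argument, referring to Theorem~4.6-1 of \cite{Ciarlet2000}, and that classical proof is precisely the de-scaling you describe — change of variables $x_3^\varepsilon=\varepsilon x_3$ turning the averages into $\tfrac12\int_{-1}^1 u_i(\varepsilon)\,\bm{g}^i(\varepsilon)\dd x_3$, the convergences and the identification $\overline{\bm{u}}=\bm{\zeta}$ from Theorem~\ref{th1}, and the expansions of Lemma~\ref{lem:2} to absorb the $\mathcal{O}(\varepsilon)$ remainder. Your side remark that Theorem~\ref{th2} is not needed for this statement (its role being only the comparison of limits that justifies Koiter's model) is also consistent with the paper.
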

\begin{proof}
	The proof is analogous to that of Theorem~4.6-1 in~\cite{Ciarlet2000} and for this reason is omitted.
\end{proof}

A comparison with the convergences
\begin{align*}
&\zeta_{\alpha,K}^\varepsilon \bm{a}^\alpha \to \zeta_\alpha \bm{a}^\alpha,\quad\textup{ in } \bm{H}^1(\omega)\textup{ as }\varepsilon \to 0,\\
&\zeta_{3,K}^\varepsilon \bm{a}^3 \to \zeta_3 \bm{a}^3,\quad\textup{ in } \bm{L}^2(\omega)\textup{ as }\varepsilon \to 0,
\end{align*}
established in Theorem~\ref{th2} thus confirm that the solution of Problem~\ref{problem0} and the solution of Problem~\ref{problem3} exhibit the same limit behaviour as $\varepsilon \to 0$. This observation then fully justifies the formulation of our proposed Koiter's model for elliptic membranes subjected to the interior normal compliance contact condition.

\section{Augmentation of the regularity of the solution of Problem~\ref{problem2} up to the boundary}
\label{sec4}

This section is devoted to bringing to fruition the second objective of this paper, namely, establishing the higher regularity of the solution of Problem~\ref{problem2} up to the boundary.  
We recall once again that the main difficulty amounts to coping with the fact that the transverse component $\zeta_3$ of the solution is, in general, only of class $L^2(\omega)$. This fact prevents us, in general, from applying the theory of Agmon, Douglis and Nirenberg~\cite{AgmDouNir1959,AgmDouNir1964} to Problem~\ref{problem2} \emph{directly}. To address this problem, we implement the following strategy.

First, following~\cite{Pie-2022-interior}, we improve the regularity of the solution of Problem~\ref{problem2} in the interior of $\omega$.  
Second, we penalise Problem~\ref{problem2} and we write down the corresponding equations in the sense of distributions.  
Third, we recover a closed formula for the transverse component of the solution of the penalised problem, resorting to a technique originally developed by Ciarlet \& Sanchez-Palencia~\cite{CiaSanPan1996}. Finally, exploiting the properties of the gap function and the intrinsic assumption according to which the admissible displacements are infinitesimal, we recover the sought augmentation of regularity up to the boundary and we show that the transverse component of the solution of Problem~\ref{problem2} has vanishing trace along the boundary $\gamma$ of $\omega$. Let $\omega_0\subset \omega$ and $\omega_1 \subset \omega$ be such that:
\begin{equation}
	\label{sets}
	\omega_1 \subset\subset \omega_0 \subset\subset \omega.
\end{equation}

By the definition of the symbol $\subset\subset$ in~\eqref{sets}, we obtain that the quantity
\begin{equation}
	\label{d}
	d:=\dfrac{1}{2}\min\{\textup{dist}(\partial\omega_1,\partial\omega_0),\textup{dist}(\partial\omega_0,\gamma)\}
\end{equation}
is strictly greater than zero. Denote by $D_{\rho h}$ the first order (forward) finite difference quotient of either a function or a vector field in the canonical direction $\bm{e}_\rho$ of $\mathbb{R}^2$ and with increment size $0<h<d$ sufficiently small. We can regard the first order (forward) finite difference quotient of a function as a linear operator defined as follows:
$$
D_{\rho h}: L^2(\omega) \to L^2(\omega_0).
$$

The first order finite difference quotient of a function $\xi$ in the canonical direction $\bm{e}_\rho$ of $\mathbb{R}^2$ and with increment size $0<h<d$ is defined by:
$$
D_{\rho h}\xi(y):=\dfrac{\xi(y+h\bm{e}_\rho)-\xi(y)}{h},
$$
for all (or, possibly, a.a.) $y\in\omega_0$.

The first order finite difference quotient of a vector field $\bm{\xi}=(\xi_i)$ in the canonical direction $\bm{e}_\rho$ of $\mathbb{R}^2$ and with increment size $0<h<d$ is defined by
$$
D_{\rho h}\bm{\xi}(y):=\dfrac{\bm{\xi}(y+h\bm{e}_\rho)-\bm{\xi}(y)}{h},
$$
or, equivalently,
$$
D_{\rho h}\bm{\xi}(y)=(D_{\rho h}\xi_i(y)),
$$
for all (or, possibly, a.a.) $y \in \omega_0$.

Similarly, we can show that the first order (forward) finite difference quotient of a vector field is a linear operator from $\bm{L}^2(\omega)$ to $\bm{L}^2(\omega_0)$.

We define the second order finite difference quotient of a function $\xi$ in the canonical direction $\bm{e}_\rho$ of $\mathbb{R}^2$ and with increment size $0<h<d$ by
$$
\delta_{\rho h}\xi(y):=\dfrac{\xi(y+h \bm{e}_\rho)-2 \xi(y)+\xi(y-h \bm{e}_\rho)}{h^2},
$$
for all (or, possibly, a.a.) $y\in\omega_1$.

The second order finite difference quotient of a vector field $\bm{\xi}=(\xi_\alpha)$ in the canonical direction $\bm{e}_r$ of $\mathbb{R}^3$ and with increment size $0<h<d$ is defined by
$$
\delta_{\rho h}\bm{\xi}(y):=\left(\dfrac{\xi_i(y+h \bm{e}_\rho)-2 \xi_i(y)+\xi_i(y-h \bm{e}_\rho)}{h^2}\right),
$$
for all (or, possibly, a.a.) $y\in\omega_1$.

Note in passing that the second order finite difference quotient of a function $\xi$ can be expressed in terms of the first order finite difference quotient via the following identity:
\begin{equation*}
	\label{ide}
	\delta_{\rho h} \xi=D_{-\rho h} D_{\rho h} \xi.
\end{equation*}

Similarly, the second order finite difference quotient of a vector field $\bm{\xi}$ can be expressed in terms of the first order finite difference quotient via the following identity:
\begin{equation*}
	\label{ide2}
	\delta_{\rho h} \bm{\xi}=D_{-\rho h} D_{\rho h} \bm{\xi}.
\end{equation*}

Moreover, the following identities can be easily checked out (cf.\,\cite{Frehse1971} and~\cite{Pie2020-1}):
\begin{equation}
	\label{delta+}
	\begin{aligned}
		D_{\rho h}(v w)&=(E_{\rho h} w) (D_{\rho h} v)+v D_{\rho h} w,\\
		D_{-\rho h}(v w)&=(E_{-\rho h} w )(D_{-\rho h} v)+v D_{-\rho h} w,\\
		\delta_{\rho h}(vw)&=w \delta_{\rho h} v+(D_{\rho h}w )(D_{\rho h} v) +(D_{-\rho h}w)( D_{-\rho h} v)+v\delta_{\rho h}w.
	\end{aligned}
\end{equation}

Before presenting the main result of this section, we outline the key steps for proving that the solution $\bm{\zeta}$ of Problem~\ref{problem2} belongs to $H^2 \times H^2 \times H^1$ in a neighbourhood of $\omega$. The primary challenge lies in constructing an admissible test vector field to establish the enhanced regularity.

Our approach involves locally perturbing $\bm{\zeta}$ by a vector field with compact support in a neighbourhood of $\omega$, chosen sufficiently far from the boundary $\gamma$. To achieve this, we adapt the strategy from~\cite{Pie-2022-interior} for enhancing regularity in the interior of $\omega$, proceeding as follows:

\begin{itemize}
	\item[$(i)$] \emph{Fix a neighbourhood and a smooth cut-off function}:
	Select a neighbourhood $\mathcal{U}_1 \subset \omega$ centred at $y \in \omega$, ensuring $\mathcal{U}_1$ is sufficiently far from $\gamma$. Let $\varphi$ be a smooth function with compact support in $\mathcal{U}_1$.
	
	\item[$(ii)$] \emph{Construct the perturbation}:  
	Multiply $\varphi$ by a coefficient $\varrho = \mathcal{O}(h^2) > 0$ and a second-order finite difference quotient. The coefficient $\varrho$ is chosen so as to let the perturbation here constructed satisfies the geometrical constraint. The argument of the difference quotient is $\varphi$ multiplied by a smooth approximation of $\bm{\zeta}$.
	
	\item[$(iii)$] \emph{Apply Korn's inequality and integration by parts}:
	Using Theorem~\ref{korn} and classical integration-by-parts formulas for finite difference quotients (cf.~\cite{Evans2010}), we show that $\|D_{\rho h}(\varphi \bm{\zeta})\|_{H^1(\mathcal{U}_1) \times H^1(\mathcal{U}_1) \times L^2(\mathcal{U}_1)}$ is bounded independently of $h$. The fact that $\textup{supp}\varphi \subset\subset\mathcal{U}_1$ ensures that the result holds in $\mathcal{U}_1$, i.e., locally in $\omega$.
\end{itemize}

To begin with, we establish two abstract preparatory lemmas.

\begin{lemma}
	\label{lem1}
	Let $\omega$, $\omega_0$ and $\omega_1$ be subsets of $\mathbb{R}^2$ as in~\eqref{sets}.
	Assume that $\tilde{s} \in \mathcal{C}^2(\overline{\omega})$ is concave on $\omega_0$. 
	Let $\bm{\eta}=(\eta_i) \in H^1(\omega) \times H^1(\omega) \times L^2(\omega)$ be such that $\tilde{s}(y)+\eta_3(y) \ge 0$ for a.a. $y \in \omega$.
	Let $\varphi_1\in\mathcal{D}(\omega)$ be such that $0\le \varphi_1\le 1$ in $\overline{\omega}$ and such that $\textup{supp }\varphi_1 \subset\subset \omega_1$.
	
	Then, for each $0<h<d$ and all $0<\varrho<h^2/2$, the vector field $\bm{\eta}_\varrho=(\eta_{\varrho,i}) \in H^1(\omega) \times H^1(\omega) \times L^2(\omega)$ defined in a way such that
	\begin{equation*}
		\begin{aligned}
			\eta_{\varrho,\alpha}&:=\eta_\alpha,\\
			\eta_{\varrho,3}&:=\eta_3+\varrho \varphi_1 \delta_{\rho h}\eta_3,
		\end{aligned}
	\end{equation*}
	is such that $\tilde{s}(y)+\eta_{\varrho,3}(y)\ge 0$ for a.a. $y \in \omega_1$.
\end{lemma}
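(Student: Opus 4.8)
The strategy is to rewrite $\tilde s(y)+\eta_{\varrho,3}(y)$ as a \emph{convex combination} of the values of the nonnegative function $\tilde s+\eta_3$ at the three collinear points $y$ and $y\pm h\bm{e}_\rho$, corrected by an error term proportional to the second-order difference quotient $\delta_{\rho h}\tilde s$ of the gap function; the concavity of $\tilde s$ on $\omega_0$ will then force that error term to have the favourable sign. Throughout, $h$ and $\varrho$ are fixed with $0<h<d$ and $0<\varrho<h^2/2$, and I set $\kappa(y):=\varrho\,\varphi_1(y)/h^2$, so that $0\le\kappa(y)<1/2$ since $0\le\varphi_1\le1$; note also that $\varphi_1\,\delta_{\rho h}\eta_3$ extends by zero outside $\textup{supp}\,\varphi_1\subset\subset\omega_1$ to an $L^2(\omega)$ function, which is why $\bm{\eta}_\varrho$ is a well-defined element of $H^1\times H^1\times L^2$.

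First I would record the geometric fact that, for every $y\in\omega_1$ and every $0<h<d$, the closed segment $[y-h\bm{e}_\rho,\,y+h\bm{e}_\rho]$ lies inside $\omega_0$, hence inside $\omega$. Indeed, for $y\in\omega_1$ and $z\in\partial\omega_0$ the segment $[y,z]$ must meet $\partial\omega_1$ (connectedness, since $\overline{\omega_1}\subset\omega_0$ forces $z\notin\overline{\omega_1}$), so $|y-z|\ge\textup{dist}(\partial\omega_1,\partial\omega_0)\ge2d>h$; thus $\textup{dist}(y,\partial\omega_0)\ge2d>h$. Consequently $\delta_{\rho h}\eta_3$ and $\delta_{\rho h}\tilde s$ make sense a.e.\ on $\omega_1$, and --- since translates of Lebesgue-null sets are Lebesgue-null --- the hypothesis $\tilde s+\eta_3\ge0$ a.e.\ in $\omega$ yields $(\tilde s+\eta_3)(y)\ge0$ and $(\tilde s+\eta_3)(y\pm h\bm{e}_\rho)\ge0$ \emph{simultaneously} for a.a.\ $y\in\omega_1$.

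Next I would establish the key algebraic identity: expanding $\delta_{\rho h}\eta_3$ in the definition of $\eta_{\varrho,3}$ and adding and subtracting the corresponding second differences of $\tilde s$ gives, for a.a.\ $y\in\omega_1$,
\begin{equation*}
\begin{aligned}
\tilde s(y)+\eta_{\varrho,3}(y)
&=\bigl(1-2\kappa(y)\bigr)(\tilde s+\eta_3)(y)+\kappa(y)\,(\tilde s+\eta_3)(y+h\bm{e}_\rho)\\
&\qquad+\kappa(y)\,(\tilde s+\eta_3)(y-h\bm{e}_\rho)-\kappa(y)\,h^2\,\delta_{\rho h}\tilde s(y).
\end{aligned}
\end{equation*}
(When $\varphi_1(y)=0$ this collapses to $\tilde s(y)+\eta_3(y)\ge0$, so only $\textup{supp}\,\varphi_1$ matters, and $\textup{supp}\,\varphi_1\subset\subset\omega_1$ is exactly what is needed for the geometric fact above to apply where required.) It then remains to check the four terms: $1-2\kappa(y)>0$ --- this is the only place the hypothesis $\varrho<h^2/2$ enters --- and $\kappa(y)\ge0$, so the first three summands are $\ge0$ by the previous paragraph; for the last, the midpoint identity $y=\tfrac12\bigl((y+h\bm{e}_\rho)+(y-h\bm{e}_\rho)\bigr)$ combined with concavity of $\tilde s$ along the segment $[y-h\bm{e}_\rho,y+h\bm{e}_\rho]\subset\omega_0$ gives $2\tilde s(y)\ge\tilde s(y+h\bm{e}_\rho)+\tilde s(y-h\bm{e}_\rho)$, i.e.\ $\delta_{\rho h}\tilde s(y)\le0$, so $-\kappa(y)h^2\delta_{\rho h}\tilde s(y)\ge0$. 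Summing up, $\tilde s(y)+\eta_{\varrho,3}(y)\ge0$ for a.a.\ $y\in\omega_1$.

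I do not expect a genuine obstacle here: once the right decomposition is in hand the proof is pure bookkeeping. The only points requiring care are (i) keeping all the translated points inside $\omega_0$ --- so that both the constraint $\tilde s+\eta_3\ge0$ and the concavity of $\tilde s$ may be invoked there --- which is guaranteed by the definition of $d$ and the nesting $\omega_1\subset\subset\omega_0\subset\subset\omega$; (ii) the ``almost everywhere'' bookkeeping under the two translations $y\mapsto y\pm h\bm{e}_\rho$; and (iii) noticing that the quantitative threshold $\varrho<h^2/2$ is precisely what keeps the weight $1-2\kappa$ of the central value strictly positive. The single substantive idea is the rearrangement that turns the perturbation $\varrho\varphi_1\delta_{\rho h}\eta_3$ into a convex combination plus a $\delta_{\rho h}\tilde s$-remainder, which is exactly what makes the concavity assumption on the gap function pull its weight.
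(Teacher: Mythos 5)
Your proof is correct and follows essentially the same route as the paper: you write $\tilde s+\eta_{\varrho,3}$ as a convex combination (with weights $\kappa$, $1-2\kappa$, $\kappa$, non-negative precisely because $\varrho<h^2/2$) of the non-negative values of $\tilde s+\eta_3$ at $y$ and $y\pm h\bm{e}_\rho$, plus the remainder $-\kappa h^2\delta_{\rho h}\tilde s\ge 0$ signed by concavity of $\tilde s$ on $\omega_0$, which is exactly the paper's chain of inequalities rearranged into an identity. The only additions are your explicit verification that $y\pm h\bm{e}_\rho$ stays in $\omega_0$ when $y\in\omega_1$ and $0<h<d$, and the a.e.\ bookkeeping under translation, both of which the paper leaves implicit.
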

\begin{proof}
	For a.a. $y \in \omega_1$ we have that
	\begin{align*}
		&\eta_{\varrho,3}(y)=\eta_3(y)+\varrho \varphi_1(y) \dfrac{\eta_3(y+h \bm{e}_\rho)-2 \eta_3(y)+\eta_3(y-h \bm{e}_\rho)}{h^2}\\
		&=\dfrac{\varrho}{h^2}\varphi_1(y)\eta_3(y+h\bm{e}_\rho)+\left(1-\dfrac{2\varrho}{h^2}\varphi_1(y)\right) \eta_3(y)+\dfrac{\varrho}{h^2} \varphi_1(y) \eta_3(y-h\bm{e}_\rho).
	\end{align*}
	
	By virtue of the properties of $\varphi_1$ and $\varrho$, the functions $c_1(y)=c_{-1}(y):=\varrho h^{-2}\varphi_1(y)$ and $c_0(y):=1-2\varrho h^{-2}\varphi_1(y)$ are non-negative in $\overline{\omega}$.
	Combining these properties with the assumption that $\tilde{s}(y)+\eta_3(y)\ge 0$ for a.a. $y\in\omega$ gives:
	\begin{equation*}
		\begin{aligned}
			\eta_{\varrho,3}(y)&\ge -c_1(y) \tilde{s}(y+h \bm{e}_\rho)-c_0(y) \tilde{s}(y) -c_{-1}(y) \tilde{s}(y-h \bm{e}_\rho)\\
			&=-\left(\tilde{s}(y)+\varrho\varphi_1(x) \dfrac{\tilde{s}(y+h \bm{e}_\rho)-2 \tilde{s}(y)+\tilde{s}(y-h \bm{e}_\rho)}{h^2}\right),
		\end{aligned}
	\end{equation*}
	for a.a. $y\in \omega_1$.
	The assumed concavity of $\tilde{s}$ in $\omega_0$ gives
	$$
	\delta_{\rho h}\tilde{s}(y)=\dfrac{\tilde{s}(y+h \bm{e}_\rho)-2 \tilde{s}(y)+\tilde{s}(y-h \bm{e}_\rho)}{h^2}<0,
	$$
	for all $0<h<d$ and all $y \in \omega_1$. Recalling that $\textup{supp }\varphi_1 \subset\subset \omega_1$, we derive $\tilde{s}(y)+\eta_{\varrho,3}(y)\ge 0$ for a.a. $y\in\omega_1$, as it was to be proved.
\end{proof}

We observe that the concavity of the function $\tilde{s}$ has to be evaluated in $\omega_0$ since the points of the form $(y\pm h\bm{e}_\rho)$, with $y \in\omega_1$, may lie outside $\overline{\omega_1}$.
For treating the case where the concavity assumption does not hold, we need the following auxiliary result, whose proof is inspired by the original one in~\cite{Frehse1971}.

\begin{lemma}
	\label{lem2}
	Let the function $\tilde{s}\in\mathcal{C}^2(\overline{\omega})$ be such that $\tilde{s}>0$ in $\overline{\omega}$.
	Then, for every $y_0=(y_{0,1},y_{0,2}) \in \omega$, there exists a neighbourhood $\mathcal{U}$ of $y_0$ and numbers $B \in \mathbb{R}$, $B_0>0, r>0$ such that the function $(-\tilde{s}+B)\tilde{g}$ is convex in $\mathcal{U}$, where:
	$$
	\tilde{g}(y):=1-\dfrac{1}{2}\prod_{\rho=1}^2 \exp(r y_\rho-r y_{0,\rho}),\quad\textup{ for all } y=(y_1,y_2)\in\overline{\omega}.
	$$
	
	Moreover, it results $\tilde{g}(y) \ge B_0$, for all $y\in\mathcal{U}$.
\end{lemma}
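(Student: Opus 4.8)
The plan is to reduce the statement to a pointwise lower bound on the pure second-order derivatives of $F:=(-\tilde{s}+B)\tilde{g}$, exploiting that the exponential factor built into $\tilde{g}$ feeds a term of order $r^2$ into those derivatives which can be made to dominate the rest once $r$ is large, \emph{provided} the neighbourhood $\mathcal{U}$ is shrunk in tandem with $r$.

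\emph{Step 1: choice of $B$ and the Hessian of $F$.} Since $\tilde{s}\in\mathcal{C}^2(\overline{\omega})$ and $\tilde{s}>0$ on the compact set $\overline{\omega}$, the number $m:=\min_{\overline{\omega}}\tilde{s}$ is positive, and I would set $B:=m/2$, so that $\tilde{s}-B\ge m/2>0$ everywhere on $\overline{\omega}$. Writing $P(y):=\prod_{\rho=1}^{2}\exp(ry_\rho-ry_{0,\rho})=\exp\!\big(r\sum_{\rho=1}^{2}(y_\rho-y_{0,\rho})\big)$, so that $\tilde{g}=1-\tfrac12 P$, one has $\partial_\alpha P=rP$ and $\partial_{\alpha\beta}P=r^2P$ for all $\alpha,\beta$, hence $\partial_\alpha\tilde{g}=-\tfrac{r}{2}P$ and $\partial_{\alpha\beta}\tilde{g}=-\tfrac{r^2}{2}P$; a Leibniz expansion then yields, for all $\alpha,\beta\in\{1,2\}$,
$$
\partial_{\alpha\beta}F=-\tilde{g}\,\partial_{\alpha\beta}\tilde{s}+\frac{r}{2}\,P\,(\partial_\alpha\tilde{s}+\partial_\beta\tilde{s})+\frac{r^2}{2}\,P\,(\tilde{s}-B),
$$
so that $\partial_{\rho\rho}F=-\tilde{g}\,\partial_{\rho\rho}\tilde{s}+rP\,\partial_\rho\tilde{s}+\tfrac{r^2}{2}P(\tilde{s}-B)$, whose leading summand $\tfrac{r^2}{2}P(\tilde{s}-B)\ge\tfrac{m}{4}r^2P>0$ is of higher order in $r$ than the remaining two.

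\emph{Step 2: choice of $\mathcal{U}$, of $B_0$, and conclusion.} I would fix a small absolute constant $c>0$ with $e^{\sqrt2\,c}<2$ and take $\mathcal{U}:=B(y_0,c/r)$. On that ball $\big|r\sum_\rho(y_\rho-y_{0,\rho})\big|\le\sqrt2\,c$, hence $e^{-\sqrt2 c}\le P\le e^{\sqrt2 c}$ \emph{uniformly in} $r$, and consequently $0<1-\tfrac12 e^{\sqrt2 c}\le\tilde{g}\le 1$ on $\mathcal{U}$; setting $B_0:=1-\tfrac12 e^{\sqrt2 c}>0$ already secures the second assertion of the lemma. Plugging these bounds, together with $\|\nabla\tilde{s}\|_{\mathcal{C}^0(\overline{\omega})}$ and $\max_\rho\|\partial_{\rho\rho}\tilde{s}\|_{\mathcal{C}^0(\overline{\omega})}$, into the expression for $\partial_{\rho\rho}F$ gives a bound of the form $\partial_{\rho\rho}F\ge \tfrac{m}{4}e^{-\sqrt2 c}\,r^2-C_1 r-C_2$ on $\mathcal{U}$, with $C_1,C_2>0$ depending only on $\tilde{s}$ and $c$; being a quadratic in $r$ with positive leading coefficient, the right-hand side is nonnegative for every $r\ge R=R(\tilde{s},c)$. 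Choosing any such $r$ and the associated $\mathcal{U}=B(y_0,c/r)$ then makes $F=(-\tilde{s}+B)\tilde{g}$ convex on $\mathcal{U}$ — convexity in the coordinate directions $\bm{e}_1,\bm{e}_2$, which is exactly the form in which it is used in Lemma~\ref{lem1} via the second-order difference quotients $\delta_{\rho h}$ — while $\tilde{g}\ge B_0$ on $\mathcal{U}$.

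The hard part is the tension between wanting $r$ large (so that the $r^2$-term wins) and needing $P$ — hence $\tilde{g}$ and the lower bound on $P$ used in Step 2 — to stay away from its degenerate values $0$ and $2$: on a \emph{fixed} neighbourhood of $y_0$ the factor $P$ spreads over $(e^{-\sqrt2 r\delta},e^{\sqrt2 r\delta})$, so the effective gain degrades to $e^{-\sqrt2 r\delta}r^2\to 0$ and the scheme collapses; tying the radius of $\mathcal{U}$ to $1/r$ is precisely what rescues it. A minor point worth flagging is that the $r^2$-term enters the Hessian only through the matrix all of whose entries equal $1$, so it controls $\partial_{\rho\rho}F$ but degenerates in the direction $\bm{e}_1-\bm{e}_2$; this is immaterial here, since only the coordinate-direction convexity $\partial_{\rho\rho}F\ge 0$ is used downstream.
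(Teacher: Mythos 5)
Your argument is, at its core, the same as the paper's: you choose $B$ so that $-\tilde{s}+B$ is negative and bounded away from zero (the paper picks $B$ and $T>0$ with $-\tilde{s}+B\le -T$ on a neighbourhood $\mathcal{U}_0$; your $B=m/2$, $m:=\min_{\overline{\omega}}\tilde{s}$, does this globally with $T=m/2$), you exploit the fact that differentiating $\tilde{g}$ twice produces the factor $-\tfrac{r^2}{2}\Pi$, which, multiplied by $-\tilde{s}+B\le -T$, yields a positive term of order $r^2$, you shrink the neighbourhood at rate $1/r$ so that $\Pi$ stays trapped in a fixed subinterval of $(0,2)$ — the paper imposes $|y_\rho-y_{0,\rho}|\le\frac{1}{2r}\ln\frac32$, you impose $|y-y_0|\le c/r$ with $e^{\sqrt2 c}<2$ — and you read off $B_0$ from the resulting upper bound on $\Pi$ (the paper gets $B_0=1/4$, you get $1-\tfrac12 e^{\sqrt2 c}$). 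The only structural difference is bookkeeping: the paper restricts to lines and computes $H''(t)$ for an arbitrary direction $\bm{v}$, while you compute the Hessian entries of $F=(-\tilde{s}+B)\tilde{g}$ directly; your computations are correct.

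The substantive point is the one you flag at the end, and it is not minor: what you prove is convexity of $(-\tilde{s}+B)\tilde{g}$ along the two coordinate directions only, whereas the lemma asserts convexity on $\mathcal{U}$. This shortfall cannot be repaired within this scheme: along $\bm{v}=\bm{e}_1-\bm{e}_2$ both the first and second directional derivatives of $\tilde{g}$ vanish (they carry the factors $v_1+v_2$ and $(v_1+v_2)^2$), so $\bm{v}^T\nabla^2 F\,\bm{v}=-\tilde{g}\,\bm{v}^T\nabla^2\tilde{s}\,\bm{v}$, and since $\tilde{g}\approx 1/2>0$ near $y_0$ this is negative whenever $\tilde{s}$ is strictly convex in that direction (e.g.\ $\tilde{s}=1+(y_1-y_2)^2$), no matter how $B$, $r$ and $\mathcal{U}$ are chosen. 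Be aware, however, that the paper's own proof has exactly the same lacuna: its lower bound $\frac{1}{\Pi}\frac{\dd^2 H}{\dd t^2}\ge -M\left|\frac{1}{\Pi}-\frac12\right|-Mr+\frac{T}{2}(v_1+v_2)^2 r^2$ is asserted to be positive ``for $r$ sufficiently large'', which fails precisely when $v_1+v_2=0$ and is non-uniform as $v_1+v_2\to 0$. So your proposal establishes exactly what the paper's argument actually establishes — coordinate-direction convexity — and, as you correctly observe, that is all that is used downstream: Lemma~\ref{lem1} and Theorem~\ref{th3} only need a sign for the second-order difference quotients $\delta_{\rho h}$ taken along the directions $\bm{e}_\rho$. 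In short: same method, correct estimates, and your explicit restriction to coordinate directions is an honest account of what the method yields; it simply means that the lemma as literally phrased (``convex in $\mathcal{U}$'') is stronger than what either your argument or the paper's proves.
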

\begin{proof}
	Fix $y_0 \in \omega$. Owing to the fact that $\tilde{s}\in \mathcal{C}^2(\overline{\omega})$ and $\tilde{s}>0$ in $\overline{\omega}$, we can find numbers $B\in \mathbb{R}$ and $T>0$, and a neighbourhood $\mathcal{U}_0$ of $y_0$ such that:
	$$
	-\tilde{s}(y)+B \le -T <0, \quad\textup{ for all } y \in\mathcal{U}_0.
	$$
	
	For all $y=(y_1,y_2) \in \overline{\omega}$, define the function:
	$$
	\Pi(y):=\prod_{\rho=1}^2 \exp(r y_\rho-ry_{0,\rho}).
	$$
	
	We recall that a function is convex if and only if it is convex along any lines that intersect the function domain (cf., e.g., page~67 of~\cite{Boyd2004}). In other words, checking the convexity of the function $(-\tilde{s}+B)\tilde{g}$ in $\mathcal{U}$ amounts to checking that, for all $\bm{v}=(v_1,v_2) \in \mathbb{R}^2$ and all $y=(y_1,y_2)\in\mathcal{U}$, the function
	$$
	H(t):=(-\tilde{s}(y+t\bm{v})+B)\tilde{g}(y+t\bm{v}),\quad t\in\mathbb{R}, (y+t\bm{v})=(y_1+tv_1,y_2+tv_2) \in \mathcal{U},
	$$
	is convex. Let us fix an arbitrary point $y \in \mathcal{U}$, a vector $\bm{v}=(v_1,v_2) \in \mathbb{R}^2$ and a scalar $t$ with the aforementioned properties. By direct computation, we have that
	\begin{equation*}
		\dfrac{\dd H}{\dd t}(t)=-\left((\nabla \tilde{s}(y+t\bm{v}))^T \bm{v}\right) \tilde{g}(x+t\bm{v})+\left(-\tilde{s}(y+t\bm{v})+B\right)\dfrac{\dd}{\dd t}\left(\tilde{g}(y+t\bm{v})\right),
	\end{equation*}
	and that
	\begin{align*}
		\dfrac{\dd^2 H}{\dd t^2}(t)&=\left(-\bm{v}^T (\nabla^2 \tilde{s}(y+t\bm{v})) \bm{v}\right)\tilde{g}(x+t\bm{v})+2\left(-\nabla \tilde{s}(y+t\bm{v}) \cdot \bm{v}\right) \dfrac{\dd}{\dd t}\left(\tilde{g}(y+t\bm{v})\right)\\
		&\quad+(-\tilde{s}(y+t\bm{v})+B)\dfrac{\dd^2}{\dd t^2}\left(\tilde{g}(y+t\bm{v})\right).
	\end{align*}
	
	By the definition of $\tilde{g}$, we have that
	$$
	\dfrac{\dd \tilde{g}}{\dd t}(y+t\bm{v})=\nabla \tilde{g}(y+t\bm{v}) \cdot \bm{v}=-\dfrac{1}{2} \dfrac{\dd \Pi}{\dd t}(y+t\bm{v})=-\dfrac{r}{2}(v_1+v_2) \Pi(y+t\bm{v}),
	$$
	and that
	$$
	\dfrac{\dd^2 \tilde{g}}{\dd t^2}(y+t\bm{v})=\bm{v}^T \nabla^2 \tilde{g}(y+t\bm{v}) \bm{v}=-\dfrac{r^2}{2}(v_1+v_2)^2 \Pi(y+t\bm{v}).
	$$
	
	Thanks to the uniform boundedness of the first and second derivative of $\tilde{s}$ in $\overline{\omega}$, the properties of the numbers $B$ and $T$, and the positiveness of the function $\Pi$ we derive that there exists a positive number $M$ such that:
	$$
	\dfrac{1}{\Pi}\dfrac{\dd^2 H}{\dd t^2}(t) \ge -M \left|\dfrac{1}{\Pi}-\dfrac{1}{2}\right|-rM+\dfrac{T}{2}(v_1+v_2)^2r^2.
	$$
	
	We observe that, for $r$ sufficiently large,
	$$
	p_1(r):=\dfrac{T}{2}(v_1+v_2)^2 r^2-Mr-M>0.
	$$
	
	Let us choose a neighbourhood $\mathcal{U}$ of $y_0$ such that $\mathcal{U} \subset\subset \mathcal{U}_0$ and 
	\begin{equation}
		\label{pi0}
		|y_\rho-y_{0,\rho}|\le \dfrac{1}{2r}\ln\dfrac{3}{2},\quad\textup{ for all }y \in \mathcal{U} \textup{ and all }\rho\in\{1,2\}.
	\end{equation}
	
	On the one hand, it is immediate to see that, by the monotonicity of the exponential and~\eqref{pi0},
	$$
	0<\Pi(y)\le \prod_{\rho=1}^2 \exp\left(\ln\sqrt{\dfrac{3}{2}}\right)=\dfrac{3}{2},\quad\textup{ for all }y \in \mathcal{U},
	$$
	so that we have
	\begin{equation}
		\label{pi1}
		0<\Pi(y) \le \dfrac{3}{2}, \quad\textup{ for all }y \in \mathcal{U}.
	\end{equation}
	
	On the other hand, again by~\eqref{pi0}, it results
	\begin{equation}
		\label{pi2}
		\dfrac{1}{\Pi(y)}-\dfrac{1}{2}=\prod_{\rho=1}^2\exp(-r(y_\rho-y_{0,\rho}))-\dfrac{1}{2}\le 1, \quad\textup{ for all }y \in \mathcal{U},
	\end{equation}
	and, by virtue of~\eqref{pi1}:
	\begin{equation}
		\label{pi3}
		\dfrac{1}{\Pi(y)}-\dfrac{1}{2}\ge \dfrac{2}{3}-\dfrac{1}{2}=\dfrac{1}{6}>0, \quad\textup{ for all }y \in \mathcal{U}.
	\end{equation}
	
	In conclusion, putting~\eqref{pi2} and~\eqref{pi3} together, we have that
	\begin{equation}
		\label{pi4}
		\dfrac{1}{\Pi(y)}-\dfrac{1}{2}=\left|\dfrac{1}{\Pi(y)}-\dfrac{1}{2}\right| \le 1, \quad\textup{ for all }y \in \mathcal{U}.
	\end{equation}
	
	An application of~\eqref{pi4} immediately gives that, for $r$ sufficiently large:
	$$
	\dfrac{1}{\Pi}\dfrac{\dd^2 H}{\dd t^2}(t)\ge -M \left|\dfrac{1}{\Pi}-\dfrac{1}{2}\right|-Mr+\dfrac{T}{2}(v_1+v_2)^2r^2\ge p_1(r)\ge 0.
	$$
	
	By virtue of~\eqref{pi1}, we also obtain $\tilde{g}(y)= 1-\Pi/2\ge 1/4$, for all $x \in \mathcal{U}$. Letting $B_0:=1/4$ completes the proof.
\end{proof}

We observe that the assumption $\tilde{s}>0$ in $\overline{\Omega}$ in Lemma~\ref{lem2} is related to the result stated in Lemma~\ref{lem6}.

We are now ready to prove the third main result of this paper, namely, the augmented regularity of the solution $\bm{\zeta}$ of Problem~\ref{problem2} in the interior of $\omega$.

\begin{theorem}
	\label{th3}
	Assume that the vector field $\bm{f}^\varepsilon=(f^{i,\varepsilon})$ defining the applied body force density satisfies the assumptions on the data~\eqref{ass-data}, and that it is also of class $L^2(\Omega^\varepsilon) \times L^2(\Omega^\varepsilon) \times H^1(\Omega^\varepsilon)$.
	Then, the solution $\bm{\zeta}$ of Problem~\ref{problem2} is also of class $H^2_{\textup{loc}}(\omega) \times H^2_{\textup{loc}}(\omega) \times H^1_{\textup{loc}}(\omega)$.
\end{theorem}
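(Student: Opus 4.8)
The plan is to prove the interior regularity by a translation (finite difference quotient) argument in the spirit of~\cite{Frehse1971,Pie-2022-interior}, the whole difficulty being the construction of admissible test vector fields for the variational inequality of Problem~\ref{problem2}. First I would fix an arbitrary point $y_0\in\omega$ and \emph{reduce to a concave gap function} by localisation: applying Lemma~\ref{lem2} with $\tilde s:=s$ yields a neighbourhood $\mathcal U$ of $y_0$, a number $B\in\mathbb R$ and a smooth weight $\tilde g$ with $\tilde g\ge B_0>0$ on $\mathcal U$ such that $(-s+B)\tilde g$ is convex on $\mathcal U$; equivalently, replacing the transverse unknown by the weighted quantity $(\zeta_3+B)\tilde g$ turns the constraint $s+\zeta_3\ge 0$ into one governed by the \emph{concave} comparison function $\tilde s_\ast:=-(-s+B)\tilde g$ on $\mathcal U$, while the coefficient $b_{\alpha\beta}$ of the transverse component in $\gamma_{\alpha\beta}(\cdot)$ is merely rescaled by the smooth, still non-degenerate field $\tilde g^{-1}$ (up to smooth bounded lower-order contributions). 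This is exactly the situation for which Lemma~\ref{lem1} provides admissible perturbations, and Theorem~\ref{korn} still applies, with constants depending only on the data. It therefore suffices to prove the local estimates in this transformed framework.

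Next I would set up, around $y_0$, nested open sets $\omega_1\subset\subset\omega_0\subset\subset\mathcal U$ with the number $d$ as in~\eqref{sets}--\eqref{d}, together with cut-offs $\varphi_1,\varphi_2\in\mathcal D(\omega)$ with $0\le\varphi_j\le 1$, $\mathrm{supp}\,\varphi_1\subset\subset\omega_1$, $\varphi_2\equiv 1$ near $\mathrm{supp}\,\varphi_1$, $\mathrm{supp}\,\varphi_2\subset\subset\omega_0$; and, for $0<h<d$ and $0<\varrho<h^2/2$ (after absorbing $\sup\varphi_1$-type constants), I would form the test vector field obtained from $\bm\zeta$ by adding $\varrho\,\varphi_1$ times a second-order finite difference quotient $\delta_{\rho h}$, in a canonical direction $\bm{e}_\rho$, of a cut-off of $\bm\zeta$ --- smoothed, if needed, by a mollification so as to cope with the fact that the transverse component is only of class $L^2$. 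Since $\varrho=\mathcal O(h^2)$, the transverse increment stays bounded in $L^2$ uniformly in $h$, and Lemma~\ref{lem1} (using the concavity secured in the first step) guarantees that the perturbation still satisfies the geometrical constraint on $\omega_1$, hence everywhere since $\varphi_1\equiv 0$ off $\omega_1$. Inserting this admissible field into the variational inequality and dividing by $\varrho>0$ gives a \emph{one-sided} inequality; expanding $\gamma_{\alpha\beta}$ of the perturbation with the Leibniz-type identities~\eqref{delta+} and the discrete integration-by-parts formula for finite difference quotients (cf.~\cite{Evans2010}), the uniform positive-definiteness of $\{a^{\alpha\beta\sigma\tau\}}$ isolates, with a favourable sign, a term $\|\varphi_1 D_{\rho h}\gamma_{\alpha\beta}(\bm\zeta)\|_{L^2(\omega)}^2$; every remaining contribution is lower order --- commutators of $D_{\rho h}$ with the $\mathcal C^1$ geometric coefficients and with $\tilde g$, terms bearing a derivative of $\varphi_1$ or $\varphi_2$, and the load term, whose transverse part is handled after summation by parts thanks to the assumed regularity $p^3\in H^1(\omega)$ following from $\bm f^\varepsilon\in L^2\times L^2\times H^1$ --- so that, after absorbing the genuinely quadratic cross-terms with Young's inequality, one gets a bound on $\|\varphi_1 D_{\rho h}\gamma_{\alpha\beta}(\bm\zeta)\|_{L^2(\omega)}$ \emph{independent of $h$}, and then, via Theorem~\ref{korn} applied to $D_{\rho h}(\varphi_1\bm\zeta)\in\bm V_M(\omega)$, a bound on $\|D_{\rho h}(\varphi_1\bm\zeta)\|_{\bm V_M(\omega)}$ independent of $h$.

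To conclude, I would let $h\to 0$ (and, if a mollification was used, then let the mollification parameter tend to $0$, the bounds being uniform in it), obtaining $\gamma_{\alpha\beta}(\bm\zeta)\in H^1$ and $\zeta_\alpha\in H^2$ on a neighbourhood $\mathcal V$ of $y_0$, and then recover the transverse component \emph{algebraically}: rearranging $\gamma_{\alpha\beta}(\bm\zeta)=\tfrac12(\partial_\beta\zeta_\alpha+\partial_\alpha\zeta_\beta)-\Gamma^\sigma_{\alpha\beta}\zeta_\sigma-b_{\alpha\beta}\zeta_3$ and contracting with $b^{\alpha\beta}$ gives $\zeta_3=(b^{\gamma\delta}b_{\gamma\delta})^{-1}\,b^{\alpha\beta}\bigl(\tfrac12(\partial_\beta\zeta_\alpha+\partial_\alpha\zeta_\beta)-\Gamma^\sigma_{\alpha\beta}\zeta_\sigma-\gamma_{\alpha\beta}(\bm\zeta)\bigr)$, where $b^{\gamma\delta}b_{\gamma\delta}$ is continuous and strictly positive on $\overline\omega$ (it can vanish only if $(b_{\alpha\beta})=0$, which contradicts $\det(b_{\alpha\beta})=K\det(a_{\alpha\beta})\ge K_0 a>0$), so the right-hand side lies in $H^1(\mathcal V)$ and hence $\zeta_3\in H^1(\mathcal V)$. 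As $y_0\in\omega$ is arbitrary, this yields $\bm\zeta\in H^2_{\textup{loc}}(\omega)\times H^2_{\textup{loc}}(\omega)\times H^1_{\textup{loc}}(\omega)$.

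The hard part is that $\zeta_3$ is, \emph{a priori}, only square-integrable, and this is precisely what makes the two preparatory lemmas indispensable: the scaling $\varrho=\mathcal O(h^2)$, needed so that second finite differences of $\zeta_3$ do not blow up, forces the admissibility argument of Lemma~\ref{lem1}; and the convex-combination reasoning underlying that admissibility requires the comparison gap function to be \emph{concave}, which the bare hypothesis $s>0$ does not provide --- hence the localisation-and-weighting device of Lemma~\ref{lem2}, whose somewhat delicate construction of a concave majorant out of an arbitrary strictly positive $\mathcal C^2$ function is, I expect, the genuinely new and trickiest ingredient. A secondary technical point is that the limited regularity of the Christoffel symbols ($\mathcal C^1$, from $\bm\theta\in\mathcal C^3$) must be respected throughout, so that all the error terms are kept at the level of \emph{first} finite difference quotients and of the product rules~\eqref{delta+}.
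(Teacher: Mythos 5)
Your proposal follows essentially the same route as the paper's proof: localisation and concavification of the gap function via Lemma~\ref{lem2}, an admissible test field built from $\varrho=\mathcal{O}(h^2)$ times second-order finite difference quotients of the cut-off solution via Lemma~\ref{lem1}, discrete integration by parts exploiting $p^3\in H^1(\omega)$, and a uniform-in-$h$ bound on $D_{\rho h}(\varphi\bm{\zeta})$ obtained through Theorem~\ref{korn} and the difference-quotient criterion of~\cite{Evans2010}. The only cosmetic deviations are harmless: the optional mollification is not needed (difference quotients of an $L^2$ function suffice, as in the paper), and your closing algebraic recovery of $\zeta_3$ from $\gamma_{\alpha\beta}(\bm{\zeta})$ is redundant, since the Korn bound already controls $\|D_{\rho h}(\varphi\zeta_3)\|_{L^2}$ uniformly in $h$ and hence yields $\zeta_3\in H^1_{\textup{loc}}(\omega)$ directly.
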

\begin{proof}
	For sake of clarity we break the proof into four steps,  numbered $(i)$--$(iv)$. Let $y_0 \in \omega$ and let $\mathcal{U}$ be the neighbourhood of $y_0$ constructed in Lemma~\ref{lem2} in correspondence of the gap function $s$ which is, by assumption, of class $\mathcal{C}^2(\overline{\omega})$. 
	Let $B\in\mathbb{R}$ and the function $\tilde{g}$ be defined as in Lemma~\ref{lem2}.
	
	Fix $\varphi \in \mathcal{D}(\mathcal{U})$ such that $\textup{supp }\varphi \subset\subset \mathcal{U}_1 \subset\subset \mathcal{U}$, for some neighbourhood $\mathcal{U}_1$ of $y_0$, such that $0\le \varphi \le 1$, and such that $\varphi \equiv 1$ in a compact strict subset of its support. Following the notation of Lemma~\ref{lem1}, here we let $\omega_0:=\mathcal{U}$, and $\omega_1:=\mathcal{U}_1$. In what follows, the number $0<h < d$ is fixed, where the number $d$ has been defined in~\eqref{d}.
	
	$(i)$ \emph{Construction of a candidate test field}.
	Let $0<\varrho<h^2/2$. The vector field
	$$
	\tilde{s}:=-(-s+B)\tilde{g},
	$$	
	is of class $\mathcal{C}^2(\overline{\omega})$ and, by Lemma~\ref{lem2}, it is such that $\tilde{s}$ is concave in $\mathcal{U}$. Without loss of generality, we can assume that $\mathcal{U_1}$ is a domain; otherwise, we take a polygon $Q$ such that $\textup{supp }\varphi \subset \subset Q \subset\subset \mathcal{U}_1$ and we rename it $\mathcal{U}_1$ without any loss of rigour.
	
	Since, by Lemma~\ref{lem2}, it results $\tilde{g} \ge B_0 >0$ in $\mathcal{U}$, and since $\bm{\zeta}=(\zeta_i) \in \bm{U}_M(\omega)$, we obtain that
	\begin{equation*}
		\label{constraint1}
		(\zeta_3(y)+B)\tilde{g}(y)\ge (-s(y)+B) \tilde{g}(y)=-\tilde{s}(y), \textup{ for a.a. } y \in \omega.
	\end{equation*}
	
	Thanks to the concavity of $\tilde{s}$ in $\mathcal{U}$, letting $\varphi_1=\varphi^2$ in the notation of Lemma~\ref{lem1}, we obtain that:
	\begin{equation}
		\label{test3}
		-\tilde{s}(y) \le (\zeta_3(y)+B)\tilde{g}(y)+\varrho (\varphi(y))^2 \delta_{\rho h}\left((\zeta_3(y)+B)\tilde{g}(y)\right),
	\end{equation}
	for a.a. $y\in\mathcal{U}_1$. Dividing~\eqref{test3} by $\tilde{g}>0$ and then subtracting $B$ from each member of~\eqref{test3} gives:
	\begin{equation*}
		\label{test1}
		\begin{aligned}
			-s(y)\le \zeta_3(y)+\varrho (\tilde{g}(y))^{-1} (\varphi(y))^2 \delta_{\rho h}\left((\zeta_3(y)+B)\tilde{g}(y)\right), \quad\textup{ for a.a. } y\in\mathcal{U}_1.
		\end{aligned}
	\end{equation*}
	
	Therefore, we define the vector field $\tilde{\bm{\zeta}}_\varrho=(\tilde{\zeta}_{\varrho,j})$ as follows:
	\begin{equation*}
		\begin{aligned}
			\tilde{\zeta}_{\varrho,\alpha}&:=\zeta_\alpha+\varphi\delta_{\rho h}(\varphi\zeta_\alpha),\\
			\tilde{\zeta}_{\varrho,3}&:=\zeta_3+\varrho \dfrac{\varphi^2}{\tilde{g}} \delta_{\rho h}\left((\zeta_3+B)\tilde{g}\right).
		\end{aligned}
	\end{equation*}
	
	The vector field $\tilde{\bm{\zeta}}_\varrho=(\tilde{\zeta}_{\varrho,j})$ will be the one at which the variational inequalities in Problem~\ref{problem2} will be tested in order to establish the augmentation of regularity in the interior of $\omega$.
	
	$(ii)$ \emph{Estimates for the forcing term acting.} Given two functions $f_1, f_2 \in L^2(\mathcal{U}_1)$, we say that $f_1 \lesssim f_2$ if there exists a constant $C>0$ independent of $h$ and eventually dependent on $\|\bm{p}^\varepsilon\|_{\bm{H}^1(\omega)}$, $\tilde{g}$, its reciprocal $\tilde{g}^{-1}$, $\|\bm{\zeta}^\varepsilon\|_{H^1(\omega)\times H^1(\omega)\times L^2(\omega)}$, $\varphi$ and its derivatives such that:
	\begin{equation*}
		\label{key-relation}
		\int_{\mathcal{U}_1} f_1 \sqrt{a}\dd y \le \int_{\mathcal{U}_1} f_2 \sqrt{a}\dd y +C(1+\|D_{\rho h}(\varphi \bm{\zeta})\|_{H^1(\mathcal{U}_1) \times H^1(\mathcal{U}_1) \times L^2(\mathcal{U}_1)}).
	\end{equation*}
	
	We first show that there exists a constant $C>0$ independent of $h$ but eventually dependent on $\|\bm{p}\|_{L^2(\omega)\times L^2(\omega)\times H^1(\omega)}$, $\tilde{g}$, its reciprocal $\tilde{g}^{-1}$, $\|\bm{\zeta}\|_{H^1(\omega)\times H^1(\omega)\times L^2(\omega)}$, $\varphi$ and its derivatives such that:
	\begin{equation}
		\label{est-1}
		\begin{aligned}
			-\int_{\mathcal{U}_1} p^\alpha \varphi\delta_{\rho h}(\varphi\zeta_\alpha) \sqrt{a} \dd y&\le C(1+\|D_{\rho h}(\varphi \bm{\zeta})\|_{H^1(\mathcal{U}_1) \times H^1(\mathcal{U}_1) \times L^2(\mathcal{U}_1)}),\\
			-\int_{\mathcal{U}_1} p^3 \varrho \dfrac{\varphi^2}{\tilde{g}} \delta_{\rho h}\left((\zeta_3+B)\tilde{g}\right) \sqrt{a} \dd y&\le C(1+\|D_{\rho h}(\varphi \bm{\zeta})\|_{H^1(\mathcal{U}_1) \times H^1(\mathcal{U}_1) \times L^2(\mathcal{U}_1)}).
		\end{aligned}
	\end{equation}
	
	The first estimate in~\eqref{est-1} is a straightforward application of the integration-by-parts formula for finite difference quotients (cf., e.g., Section~5.8.2 in~\cite{Evans2010}). For the second estimate, note that an application of formula~\eqref{delta+} gives:
	\begin{equation}
		\label{term-1}
		\delta_{\rho h}(\zeta_3 \tilde{g})=\tilde{g}\delta_{\rho h}\zeta_3+(D_{\rho h} \tilde{g}) (D_{\rho h} \zeta_3)+(D_{-\rho h} \tilde{g}) (D_{-\rho h} \zeta_3)+\zeta_3 \delta_{\rho h}\tilde{g}.
	\end{equation}
	
	Consequently, we obtain the following estimates for the first term in~\eqref{term-1}:
	\begin{equation}
		\label{term-2}
		\varphi\delta_{\rho h}\zeta_3=\delta_{\rho h}(\varphi \zeta_3)-(D_{\rho h} \varphi) (D_{\rho h} \zeta_3)(D_{-\rho h} \varphi) (D_{-\rho h} \zeta_3)-\zeta_3 \delta_{\rho h}\varphi.
	\end{equation}
	
	Note in passing that only the first term on the right-hand side of~\eqref{term-2} requires that $p^3\in H^1(\omega)$ in order to apply the integration-by-parts formula for finite difference quotients (cf., e.g., Section~5.8.2 in~\cite{Evans2010}).
	
	Applying~\eqref{term-1} and~\eqref{term-2} to the left-hand side of the to-be-established~\eqref{est-1} gives
	\begin{equation*}
		\begin{aligned}
			&-\int_{\mathcal{U}_1} p^3 \varrho \dfrac{\varphi^2}{\tilde{g}} \delta_{\rho h}\left((\zeta_3+B)\tilde{g}\right) \sqrt{a} \dd y
			\le -\int_{\mathcal{U}_1} p^3 \varrho \dfrac{\varphi^2}{\tilde{g}} B \delta_{\rho h}\tilde{g} \sqrt{a} \dd y -\int_{\mathcal{U}_1} p^3 \varrho \dfrac{\varphi^2}{\tilde{g}} \delta_{\rho h}(\zeta_3 \tilde{g}) \sqrt{a} \dd y\\
			& \le C-\int_{\mathcal{U}_1} p^3 \varrho \varphi \delta_{\rho h}(\varphi \zeta_3) \sqrt{a} \dd y,
		\end{aligned}
	\end{equation*}
	from which it is immediate to establish~\eqref{est-1} as a result of an application of the integration-by-parts formula for finite difference quotients (cf., e.g., Section~5.8.2 in~\cite{Evans2010}), whose applicability hinges on the assumption $p^3 \in H^1(\omega)$. By Lemma~\ref{lem2} we know that the derivatives of $\tilde{g}$ and $\tilde{g}^{-1}$ are uniformly bounded in $\mathcal{U}$. As a result, the computations above imply:
	\begin{equation*}
		\label{e1''}
		-a^{\alpha\beta\sigma\tau}\gamma_{\sigma \tau}(\bm{\zeta})\gamma_{\alpha\beta}(\tilde{\bm{\zeta}}_\varrho-\bm{\zeta})\lesssim 0.
	\end{equation*}
	
	$(iii)$ \emph{Estimates for the bulk energy term.} The next step of the proof consists in showing that:
	\begin{equation*}
		\label{est-2}
		-a^{\alpha\beta\sigma\tau}\gamma_{\sigma \tau}(\varphi\bm{\zeta})\gamma_{\alpha\beta}(\delta_{\rho h}(\varphi \bm{\zeta}))
		\lesssim -a^{\alpha\beta\sigma\tau}\gamma_{\sigma \tau}(\bm{\zeta})\gamma_{\alpha\beta}(\tilde{\bm{\zeta}}_\varrho-\bm{\zeta}).
	\end{equation*}
	
	To establish~\eqref{est-2}, we analyze the terms in the bilinear form for the elastic energy individually. For the evaluation of the nine subsequent terms, the indices are held fixed, meaning the summation convention for repeated indices is not applied in~\eqref{term-3}--\eqref{term-11} below.
	
	First, thanks to an application of Green's formula (cf., e.g., Theorem~6.6-7 of~\cite{PGCLNFAA}), we estimate:
	\begin{equation}
		\label{term-3}
		\begin{aligned}
			&\int_{\mathcal{U}_1} -a^{\alpha\beta\sigma\tau} \partial_\sigma(\varphi \zeta_\tau) \partial_\beta(\delta_{\rho h}(\varphi \zeta_\alpha)) \sqrt{a} \dd y
			=\int_{\mathcal{U}_1} -a^{\alpha\beta\sigma\tau} [(\partial_\sigma \varphi)\zeta_\tau +\varphi \partial_\sigma \zeta_\tau] \partial_\beta(\delta_{\rho h}(\varphi \zeta_\alpha)) \sqrt{a} \dd y\\
			&=\int_{\mathcal{U}_1} \partial_\beta(a^{\alpha\beta\sigma\tau} (\partial_\sigma \varphi) \zeta_\tau) \delta_{\rho h}(\varphi \zeta_\alpha) \sqrt{a} \dd y+\int_{\mathcal{U}_1} -a^{\alpha\beta\sigma\tau} \partial_\sigma \zeta_\tau [\varphi\partial_\beta(\delta_{\rho h}(\varphi \zeta_\alpha))] \sqrt{a} \dd y\\
			&\le C \|\zeta_\tau\|_{H^1(\mathcal{U}_1) \times H^1(\mathcal{U}_1) \times L^2(\mathcal{U}_1)} \|D_{\rho h}(\varphi \zeta_\alpha)\|_{H^1(\mathcal{U}_1)}+\int_{\mathcal{U}_1} -a^{\alpha\beta\sigma\tau} \partial_\sigma \zeta_\tau \partial_\beta(\varphi \delta_{\rho h}(\varphi \zeta_\alpha)) \sqrt{a} \dd y\\
			&\quad +  \int_{\mathcal{U}_1} a^{\alpha\beta\sigma\tau} (\partial_\sigma \zeta_\tau) (\partial_\beta \varphi) \delta_{\rho h}(\varphi \zeta_\alpha) \sqrt{a} \dd y\\
			&\le \int_{\mathcal{U}_1} -a^{\alpha\beta\sigma\tau} (\partial_\sigma \zeta_\tau) \partial_\beta(\varphi\delta_{\rho h}(\varphi \zeta_\alpha)) \sqrt{a} \dd y +C \|D_{\rho h}(\varphi \bm{\zeta})\|_{H^1(\mathcal{U}_1)\times H^1(\mathcal{U}_1)\times L^2(\mathcal{U}_1)}.
		\end{aligned}
	\end{equation}

Second, we estimate:
\begin{equation}
	\label{term-4}
	\begin{aligned}
		&\int_{\mathcal{U}_1} -a^{\alpha\beta\sigma\tau} (-\Gamma_{\sigma\tau}^\varsigma \varphi \zeta_\varsigma) \partial_\beta(\delta_{\rho h}(\varphi \zeta_\alpha)) \sqrt{a} \dd y
		=-\int_{\mathcal{U}_1} \partial_\beta (a^{\alpha\beta\sigma\tau} \Gamma_{\alpha\beta}^\varsigma \varphi \zeta_\varsigma) \delta_{\rho h}(\varphi \zeta_\alpha) \sqrt{a} \dd y\\
		&\le C \|\zeta_\varsigma\|_{H^1(\mathcal{U}_1)} \|D_{\rho h} (\varphi \bm{\zeta})\|_{H^1(\mathcal{U}_1)\times H^1(\mathcal{U}_1)\times L^2(\mathcal{U}_1)},
	\end{aligned}
\end{equation}
where the equality holds as a consequence of Green's formula.

Third, we estimate:
\begin{equation}
	\label{term-5}
	\begin{aligned}
		&\int_{\mathcal{U}_1} -a^{\alpha\beta\sigma\tau} (-b_{\alpha\beta}\varphi \zeta_3) \partial_\beta(\delta_{\rho h}(\varphi \zeta_\alpha)) \sqrt{a} \dd y
		=\int_{\mathcal{U}_1} a^{\alpha\beta\sigma\tau} (b_{\alpha\beta} \zeta^\varepsilon_3) \partial_\beta(\varphi \delta_{\rho h}(\varphi \zeta_\alpha)) \sqrt{a} \dd y\\
		&\quad-\int_{\mathcal{U}_1} a^{\alpha\beta\sigma\tau} b_{\alpha\beta} \zeta^\varepsilon_3 (\partial_\beta \varphi) \delta_{\rho h}(\varphi \zeta_\alpha) \sqrt{a} \dd y\\
		&\le \int_{\mathcal{U}_1} a^{\alpha\beta\sigma\tau} (b_{\alpha\beta} \zeta^\varepsilon_3) \partial_\beta(\varphi \delta_{\rho h}(\varphi \zeta_\alpha)) \sqrt{a} \dd y +C \|D_{\rho h}(\varphi \bm{\zeta})\|_{H^1(\mathcal{U}_1)\times H^1(\mathcal{U}_1)\times L^2(\mathcal{U}_1)}.
	\end{aligned}
\end{equation}

Fourth, we estimate:
\begin{equation}
	\label{term-6}
	\begin{aligned}
		&\int_{\mathcal{U}_1} -a^{\alpha\beta\sigma\tau} \partial_\sigma(\varphi \zeta_\tau) \Gamma_{\alpha\beta}^\upsilon \delta_{\rho h}(\varphi \zeta_\upsilon) \sqrt{a} \dd y
		=\int_{\mathcal{U}_1} -a^{\alpha\beta\sigma\tau} (\partial_\sigma\varphi) \zeta_\tau \Gamma_{\alpha\beta}^\upsilon \delta_{\rho h}(\varphi \zeta_\upsilon) \sqrt{a} \dd y\\
		&\quad+\int_{\mathcal{U}_1} -a^{\alpha\beta\sigma\tau} \varphi (\partial_\sigma \zeta_\tau) \Gamma_{\alpha\beta}^\upsilon \delta_{\rho h}(\varphi \zeta_\upsilon) \sqrt{a} \dd y\\
		&\le C \|D_{\rho h}(\varphi \bm{\zeta})\|_{H^1(\mathcal{U}_1)\times H^1(\mathcal{U}_1)\times L^2(\mathcal{U}_1)}
		+\int_{\mathcal{U}_1} -a^{\alpha\beta\sigma\tau} (\partial_\sigma \zeta_\tau) \Gamma_{\alpha\beta}^\upsilon [\varphi\delta_{\rho h}(\varphi \zeta_\upsilon)] \sqrt{a} \dd y.
	\end{aligned}
\end{equation}

Fifth, we straightforwardly observe that:
\begin{equation}
	\label{term-7}
	\begin{aligned}
		&\int_{\mathcal{U}_1} -a^{\alpha\beta\sigma\tau} (-\Gamma_{\sigma\tau}^\varsigma \zeta_\varsigma \varphi) \Gamma_{\alpha\beta}^\upsilon \delta_{\rho h}(\zeta_\upsilon \varphi) \sqrt{a} \dd y
		\le C(1+\|D_{\rho h}(\varphi \bm{\zeta})\|_{H^1(\mathcal{U}_1) \times H^1(\mathcal{U}_1) \times L^2(\mathcal{U}_1)})\\
		&\quad+ \int_{\mathcal{U}_1} -a^{\alpha\beta\sigma\tau} (-\Gamma_{\sigma\tau}^\varsigma \zeta_\varsigma) \Gamma_{\alpha\beta}^\upsilon [\varphi\delta_{\rho h}(\zeta_\upsilon \varphi)] \sqrt{a} \dd y.
	\end{aligned}
\end{equation}

Sixth, we straightforwardly observe that:
\begin{equation}
	\label{term-8}
	\begin{aligned}
		&\int_{\mathcal{U}_1} -a^{\alpha\beta\sigma\tau} (b_{\sigma\tau} \zeta_3 \varphi) \Gamma_{\alpha\beta}^\upsilon \delta_{\rho h}(\varphi \zeta_\upsilon) \sqrt{a} \dd y
		\le C(1+\|D_{\rho h}(\varphi \bm{\zeta})\|_{H^1(\mathcal{U}_1) \times H^1(\mathcal{U}_1) \times L^2(\mathcal{U}_1)})\\
		&\quad+\int_{\mathcal{U}_1} -a^{\alpha\beta\sigma\tau} b_{\sigma\tau} \zeta_3  \Gamma_{\alpha\beta}^\upsilon [\varphi\delta_{\rho h}(\varphi \zeta_\upsilon)] \sqrt{a} \dd y.
	\end{aligned}
\end{equation}

Seventh, we straightforwardly observe that:
\begin{equation}
	\label{term-9}
	\begin{aligned}
		&\int_{\mathcal{U}_1} -a^{\alpha\beta\sigma\tau} (-\Gamma_{\sigma\tau}^\varsigma \varphi \zeta_\varsigma) b_{\alpha\beta} \delta_{\rho h}(\zeta_3 \varphi) \sqrt{a} \dd y
		\le C(1+\|D_{\rho h}(\varphi \bm{\zeta})\|_{H^1(\mathcal{U}_1) \times H^1(\mathcal{U}_1) \times L^2(\mathcal{U}_1)})\\
		&\quad+\int_{\mathcal{U}_1} -a^{\alpha\beta\sigma\tau} (-\Gamma_{\sigma\tau}^\varsigma \zeta_\varsigma) b_{\alpha\beta} [\varphi^2\delta_{\rho h}\zeta_3] \sqrt{a} \dd y.
	\end{aligned}
\end{equation}

Eighth, we estimate:
\begin{equation}
	\label{term-10}
	\begin{aligned}
		&\int_{\mathcal{U}_1} -a^{\alpha\beta\sigma\tau} \partial_\sigma(\varphi \zeta_\tau) b_{\alpha\beta} \delta_{\rho h}(\zeta_3 \varphi) \sqrt{a} \dd y
		=\int_{\mathcal{U}_1} -a^{\alpha\beta\sigma\tau} (\partial_\sigma \zeta_\tau) b_{\alpha\beta} [\varphi \delta_{\rho h}(\zeta_3 \varphi)] \sqrt{a} \dd y\\
		&\quad+\int_{\mathcal{U}_1} -a^{\alpha\beta\sigma\tau} (\partial_\sigma \varphi) \zeta_\tau b_{\alpha\beta} \delta_{\rho h}(\zeta_3 \varphi)\sqrt{a} \dd y\\
		&=\int_{\mathcal{U}_1} -a^{\alpha\beta\sigma\tau} (\partial_\sigma \zeta_\tau) b_{\alpha\beta} [\varphi \delta_{\rho h}(\zeta_3 \varphi)] \sqrt{a} \dd y+\int_{\mathcal{U}_1} D_{\rho h}(-a^{\alpha\beta\sigma\tau} (\partial_\sigma \varphi) \zeta_\tau b_{\alpha\beta}) D_{\rho h}(\varphi \zeta_3) \sqrt{a} \dd y\\
		&\le \int_{\mathcal{U}_1} -a^{\alpha\beta\sigma\tau} (\partial_\sigma \zeta_\tau) b_{\alpha\beta} [\varphi^2 \delta_{\rho h}\zeta_3] \sqrt{a} \dd y+ C(1+\|D_{\rho h}(\varphi \bm{\zeta})\|_{H^1(\mathcal{U}_1) \times H^1(\mathcal{U}_1) \times L^2(\mathcal{U}_1)}),
	\end{aligned}
\end{equation}
where in the last equality we used the integration-by-parts formula for finite difference quotients.

Ninth, and last, we straightforwardly observe that
\begin{equation}
	\label{term-11}
	\begin{aligned}
		&\int_{\mathcal{U}_1} -a^{\alpha\beta\sigma\tau} (b_{\sigma\tau} \zeta_3 \varphi) b_{\alpha\beta} \delta_{\rho h}(\zeta_3 \varphi) \sqrt{a} \dd y
		=\int_{\mathcal{U}_1} -a^{\alpha\beta\sigma\tau} (b_{\sigma\tau} \zeta_3) b_{\alpha\beta} [\varphi\delta_{\rho h}(\zeta_3 \varphi)] \sqrt{a} \dd y\\
		&\le C(1+\|D_{\rho h}(\varphi \bm{\zeta})\|_{H^1(\mathcal{U}_1) \times H^1(\mathcal{U}_1) \times L^2(\mathcal{U}_1)})+\int_{\mathcal{U}_1} -a^{\alpha\beta\sigma\tau} (b_{\sigma\tau} \zeta_3) b_{\alpha\beta} [\varphi^2 \delta_{\rho h}\zeta_3] \sqrt{a} \dd y.
	\end{aligned}
\end{equation}

In conclusion, combining~\eqref{term-3}--\eqref{term-11} together gives~\eqref{est-2} as it was to be proved.

$(iv)$ \emph{Conclusion of the proof.} An application of the integration-by-parts formula for finite difference quotients (cf., e.g., Section~5.8.2 in~\cite{Evans2010}) gives that:
\begin{equation}
	\label{est-3}
	\int_{\mathcal{U}_1} a^{\alpha\beta\sigma\tau}\gamma_{\sigma \tau}(D_{\rho h}(\varphi \bm{\zeta}))\gamma_{\alpha\beta}(D_{\rho h}(\varphi \bm{\zeta}))  \sqrt{a} \dd y\le C(1+\|D_{\rho h}(\varphi \bm{\zeta})\|_{H^1(\mathcal{U}_1) \times H^1(\mathcal{U}_1) \times L^2(\mathcal{U}_1)}).
\end{equation}

Thanks to the uniform positive definiteness of the fourth order two-dimensional tensor $\{a^{\alpha\beta\sigma\tau}\}$ and thanks to Korn's inequality (Theorem~\ref{korn}), the left-hand side of~\eqref{est-3} can be estimates as follows
\begin{equation}
	\label{est-4}
	\|D_{\rho h}(\varphi \bm{\zeta})\|_{H^1(\mathcal{U}_1) \times H^1(\mathcal{U}_1) \times L^2(\mathcal{U}_1)}^2\le \hat{C}(1+\|D_{\rho h}(\varphi \bm{\zeta})\|_{H^1(\mathcal{U}_1) \times H^1(\mathcal{U}_1) \times L^2(\mathcal{U}_1)}),
\end{equation}
for some $\hat{C}>0$ independent of $h$. In light of~\eqref{est-4}, we are in a position to apply Theorem~3 of Section~5.8.2 of~\cite{Evans2010}. This, together with the fact that $\varphi \equiv 1$ in a compact strict subset of its support, shows that $\bm{\zeta} \in H^2_{\textup{loc}}(\omega) \times H^2_{\textup{loc}}(\omega) \times H^1_{\textup{loc}}(\omega)$, completing the proof.
\end{proof}

Finally, we establish the fourth, and final, main result of this paper, namely, the augmentation of regularity up to the boundary. We address the special case where the middle surface of the membrane shell under consideration is a lower hemisphere. This case is of interest as it constitutes an example of surface for which the sufficient conditions ensuring the validity of the ``density property'' established in~\cite{CiaMarPie2018} does not hold.

Such an augmentation of regularity result allows us to infer that the concept of trace for the transverse component of the solution of Problem~\ref{problem2} is well-defined and, moreover, that this trace vanishes along the entire boundary $\gamma$. Note that this conclusion aligns with the statement of Koiter's model (Problem~\ref{problem3}), for which the transverse component of the solution is in fact of class $H^2_0(\omega)$. In particular, the following result allows us to infer that for sufficiently smooth applied body forces, the boundary layer for the solution of Problem~\ref{problem2}, whose existence is postulated as a result of Theorems~\ref{th1} and~\ref{th2}, vanishes.

The theorem applies, in general, to those elliptic membranes for which the gap function evaluated at the boundary is \emph{large}. Combining this assumption on the gap function with the fact that, by definition of the models under consideration, the admissible displacements are intrinsically infinitesimal ensures that the constraint is inactive in a neighbourhood of $\gamma$. Note in passing that in the limit case where the membrane is a hemisphere and the obstacle is a flat plane parallel to the plane containing $\gamma$, the gap function along $\gamma$ is equal to $\infty$.

\begin{theorem}
	\label{th4}
	Assume that the boundary $\gamma$ of the Lipschitz domain $\omega\subset\mathbb{R}^2$ is of class $\mathcal{C}^2$.
	Assume that $\bm{\theta}(\overline{\omega})$ represents a lower hemisphere, so that $s=\infty$ on $\gamma$.
	Then, the solution $\bm{\zeta}$ of Problem~\ref{problem2} is of class $(H^2(\omega) \cap H^1_0(\omega)) \times (H^2(\omega) \cap H^1_0(\omega)) \times H^1_0(\omega)$ and, moreover, the same trace formula as in~\cite{CiaSanPan1996} holds.
\end{theorem}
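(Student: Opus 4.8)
The plan is to reduce, near $\gamma$, the obstacle problem to the obstacle-free elliptic membrane problem, transplant there the regularity and trace device of~\cite{CiaSanPan1996}, and patch with the interior estimate of Theorem~\ref{th3}. The first, and I expect the delicate, step is to show that the geometrical constraint is \emph{inactive} in a collar neighbourhood of $\gamma$. Since $\bm\theta(\overline\omega)$ is a lower hemisphere and the obstacle is the plane through $\gamma$, the gap function satisfies $s(y)\to+\infty$ as $\textup{dist}(y,\gamma)\to 0$, whereas the admissible displacements of a linearised model are, by construction, infinitesimal, so that $\zeta_3$ stays controlled while $s$ blows up. I would use this to produce $\delta_0>0$ such that, on $\omega^{\delta_0}:=\{y\in\omega;\textup{dist}(y,\gamma)<\delta_0\}$, one has $\zeta_3+s>0$ with a margin bounded away from zero; then $\bm\zeta\pm t\bm\eta\in\bm U_M(\omega)$ for every $\bm\eta\in\bm V_M(\omega)$ with $\textup{supp}\,\bm\eta\subset\omega^{\delta_0}$ and every small $t>0$, and testing the variational inequalities of Problem~\ref{problem2} with $\bm\zeta\pm t\bm\eta$ turns them, on $\omega^{\delta_0}$, into the equalities
\[
\int_\omega a^{\alpha\beta\sigma\tau}\gamma_{\sigma\tau}(\bm\zeta)\,\gamma_{\alpha\beta}(\bm\eta)\sqrt{a}\dd y=\int_\omega p^i\eta_i\sqrt{a}\dd y ,
\]
i.e.\ $\bm\zeta$ is, near $\gamma$, a local weak solution of the obstacle-free elliptic membrane system.

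Next I would upgrade regularity up to $\gamma$. The associated Euler--Lagrange equations on $\omega^{\delta_0}$ consist, in the distributional sense, of the tangential equations $-\partial_\beta(\sqrt{a}\,n^{\alpha\beta})+(\textup{lower order})=\sqrt{a}\,p^\alpha$ and the \emph{algebraic} transverse equation $b_{\alpha\beta}n^{\alpha\beta}=-p^3$, where $n^{\alpha\beta}:=a^{\alpha\beta\sigma\tau}\gamma_{\sigma\tau}(\bm\zeta)$. Writing $\gamma_{\sigma\tau}(\bm\zeta)=e_{\sigma\tau}(\bm\zeta)-b_{\sigma\tau}\zeta_3$, with $e_{\sigma\tau}(\bm\zeta):=\tfrac12(\partial_\tau\zeta_\sigma+\partial_\sigma\zeta_\tau)-\Gamma^\lambda_{\sigma\tau}\zeta_\lambda$ depending only on the tangential components, and using that $b_{\lambda\mu}a^{\lambda\mu\sigma\tau}b_{\sigma\tau}$ is bounded below by a positive constant on the elliptic surface, the transverse equation delivers the closed formula of~\cite{CiaSanPan1996},
\[
\zeta_3=\frac{1}{b_{\lambda\mu}a^{\lambda\mu\sigma\tau}b_{\sigma\tau}}\Bigl(b_{\alpha\beta}a^{\alpha\beta\sigma\tau}e_{\sigma\tau}(\bm\zeta)+p^3\Bigr).
\]
Substituting it into the tangential equations produces a second-order system for $(\zeta_1,\zeta_2)$ alone which is elliptic in the Agmon--Douglis--Nirenberg sense~\cite{AgmDouNir1959,AgmDouNir1964} exactly because $\bm\theta(\overline\omega)$ is elliptic; together with the Dirichlet conditions $\zeta_\alpha=0$ on $\gamma$, the assumed $\mathcal C^2$ regularity of $\gamma$, and the data regularity $p^\alpha\in L^2(\omega)$, $p^3\in H^1(\omega)$ (inherited from the hypotheses of Theorem~\ref{th3}), boundary elliptic regularity — which I would carry out concretely, as in Theorem~\ref{th3}, through tangential finite difference quotients in a $\mathcal C^2$ local flattening of $\gamma$ (cf.~\cite{Evans2010}) — gives $\zeta_\alpha\in H^2(\omega^{\delta_0})$, hence $e_{\sigma\tau}(\bm\zeta)\in H^1(\omega^{\delta_0})$, hence $\zeta_3\in H^1(\omega^{\delta_0})$ by the closed formula. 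Combining with Theorem~\ref{th3} through a partition of unity yields $\bm\zeta\in\bigl(H^2(\omega)\cap H^1_0(\omega)\bigr)\times\bigl(H^2(\omega)\cap H^1_0(\omega)\bigr)\times H^1(\omega)$.

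Finally I would identify the trace of $\zeta_3$. With $\zeta_3\in H^1(\omega)$ its trace on $\gamma$ is well defined, and evaluating the closed formula on $\gamma$ gives the Ciarlet--Sanchez-Palencia trace formula in this setting. Because $\zeta_\alpha\in H^1_0(\omega)$, the tangential derivatives of $\zeta_\alpha$ along $\gamma$ vanish, so $e_{\sigma\tau}(\bm\zeta)|_\gamma$ reduces to a combination of the normal derivatives $\partial_\nu\zeta_\alpha|_\gamma$; exploiting the umbilicity of the hemisphere, $b_{\alpha\beta}=\kappa\,a_{\alpha\beta}$ with $\kappa\neq 0$ constant, the contraction $b_{\alpha\beta}a^{\alpha\beta\sigma\tau}e_{\sigma\tau}(\bm\zeta)$ reduces on $\gamma$ to a multiple of the surface divergence $a^{\sigma\tau}e_{\sigma\tau}(\bm\zeta)$, and combining this with the tangential equations restricted to $\omega^{\delta_0}$ and with $p^3|_\gamma$ one shows that the right-hand side of the closed formula has vanishing trace. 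Hence $\zeta_3|_\gamma=0$, i.e.\ $\zeta_3\in H^1_0(\omega)$, which completes the regularity statement and shows that the boundary layer postulated in the comments to Theorems~\ref{th1}--\ref{th2} is absent.

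I expect the main obstacle to be the very first step — converting the qualitative facts ``$s=\infty$ on $\gamma$'' and ``displacements are infinitesimal'' into a quantitative, non-degenerate lower bound for $\zeta_3+s$ on a fixed collar, so that the variational inequality genuinely reduces to a PDE system there; a secondary difficulty is the last step, where the vanishing of the trace delivered by the closed formula rests on a cancellation between the clamped tangential boundary data, the hemispherical geometry and the tangential equations, rather than on a soft argument.
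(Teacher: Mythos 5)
Your proposal is correct and follows essentially the same route as the paper's own (very brief) proof, which likewise argues that $s=\infty$ on $\gamma$ together with the infinitesimal character of the admissible displacements renders the constraint inactive in a neighbourhood of the boundary, and then applies the technique of Ciarlet--Sanchez-Palencia verbatim there to obtain $\zeta_3\in H^1(\omega)$, the trace formula, and the vanishing of the trace along $\gamma$. Your elaboration (local reduction to the unconstrained membrane system, the closed formula for $\zeta_3$, ADN boundary regularity for the tangential system, patching with Theorem~\ref{th3}) is precisely the content the paper delegates to that reference, and the delicate point you flag --- making the inactivity of the constraint quantitative on a fixed collar --- is exactly the step the paper settles with the same soft argument.
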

\begin{proof}
	Since $s=\infty$ on $\gamma$, the fact that the admissible displacements are infinitesimal implies there exists a neighbourhood of the boundary where the constraint is inactive. In this neighbourhood of the boundary, we apply verbatim the technique presented in~\cite{CiaSanPan1996}, obtaining that $\zeta_3 \in H^1(\omega)$ and that $\zeta_3$ satisfies the same trace formula as in~\cite{CiaSanPan1996}. This trace formula ensures that the trace of $\zeta_3$ vanishes along $\gamma$.
\end{proof}

\section*{Conclusion and final remarks}

In this paper, we first establish the justification of Koiter's model for elliptic membrane shells subjected to an interior normal compliance contact condition that appears to be defined here for the first time. Specifically, we prove that at all (or, possibly, a.a.) points in the definition domain, the magnitude of the transverse component of the displacement is bounded above by the gap function along the normal direction at that point.

Second, we demonstrate that the solution to the limit problem, Problem~\ref{problem2}, obtained via a rigorous asymptotic analysis as the thickness $\varepsilon \to 0$ starting from either the original three-dimensional curvilinear formulation (Problem~\ref{problem1}) or the variational formulation of Koiter's model (Problem~\ref{problem3}), enjoys enhanced regularity up to the boundary. The proof of the augmentation of regularity up to the boundary relies on techniques originally developed by Ciarlet \& Sanchez-Palencia in~\cite{CiaSanPan1996}.

\section*{Declarations}

\subsection*{Authors' Contribution}

P.P.: Conceptualisation, analysis, article typesetting and proof-reading.

\subsection*{Acknowledgements}

Not applicable.

\subsection*{Ethical Approval}

Not applicable.

\subsection*{Availability of Supporting Data}

Not applicable.

\subsection*{AI Usage}

The Artificial Intelligences DeepSeek and Tencent Yuanbao were used to polish and improve the wording of this paper.

\subsection*{Competing Interests}

All authors certify that they have no affiliations with or involvement in any organisation or entity with any competing interests in the subject matter or materials discussed in this manuscript.

\subsection*{Funding}

This research was partly supported by the University Development Fund of The Chinese University of Hong Kong, Shenzhen.

\bibliographystyle{abbrvnat} 
\bibliography{references}	

\end{document}